\patchcmd{\BR@backref}{\newblock}{\newblock[cited on p.~}{}{}
\patchcmd{\BR@backref}{\par}{]\par}{}{}
\declaretheoremstyle[headfont=\bfseries, headpunct={:},
notefont=\bfseries, notebraces={}{} ]{hypstyle}
\declaretheorem[style=hypstyle, name={}]{hyp}
\theoremstyle{theorem} 
\declaretheorem[name=Theorem]{thm} 
\declaretheorem[name=Proposition]{prop}
\declaretheorem[name=Lemma]{lem}
\declaretheorem[name=Corollary]{cor}
\theoremstyle{definition} \declaretheorem[name=Definition]{defin}
\theoremstyle{remark} \declaretheorem[name=Remark]{rem}
\newcommand\cA{{\mathcal A}} \newcommand\cB{{\mathcal B}}
\newcommand\cG{{\mathcal G}} \newcommand\cH{{\mathcal H}}
 \newcommand\cJ{{\mathcal J}}
\newcommand\cP{{\mathcal P}} \newcommand\cQ{{\mathcal Q}}
\newcommand\cR{{\mathcal R}} \newcommand\cT{{\mathcal T}}
\newcommand\cS{{\mathcal S}} \newcommand\cN{{\mathcal N}}
\newcommand\cU{{\mathcal U}} 
\newcommand\sL{{\mathscr L}} 
\newcommand\sB{{\mathscr B}} \newcommand\sC{{\mathscr C}}
 \newcommand\bN{{\mathbb N}}
 \newcommand\bR{{\mathbb R}}
 \newcommand\bZ{{\mathbb Z}}
\renewcommand\L{{\mathbf L}}
 \let\emptyset\varnothing
\newcommand\ve{\varepsilon} 
\newcommand\vs{\varsigma} 
\newcommand{\norm}[1]{\left\lVert{#1}\right\rVert}
\newcommand{\abs}[1]{|{#1}|} \newcommand\Id{{\mathds{1}}}
\newcommand\ovl{{\Delta}}
\newcommand\ovlregion{{\Omega}}
\newcommand\leb{{\mathbf m}} \newcommand\uspace{X}
\newcommand\metr{\mathbf{d}}
\newcommand{\expan}{{\Lambda}} 
\newcommand{\dist}{{D}} 
\newcommand{\distexp}{{\alpha}} 
\newcommand{\modreg}{{a_{0}}} 
\newcommand{\sigtail}{{\sigma}} 
\newcommand{\ntail}{{n_0}} \newcommand{\epstail}{\ve_0}
\newcommand{\epstailone}{\ve_1} \newcommand{\epstailtwo}{\ve_2}
\newcommand{\epstailthree}{\ve_3} \newcommand{\epstailfour}{\ve_4}
\newcommand{\coupsize}{(1/2)C_{\cB_{\epstail}}^{-1}e^{-\modreg\epstail^\distexp}}
\newcommand\Cball{C_{ball}}
\newcommand\Ca{{e^{\modreg\epstail^\distexp}}}
\newcommand\Caa{{e^{-\modreg\epstail^\distexp}}}
\newcommand\infw{\Delta \Gamma_{N} e^{-\modreg \epstail^\distexp}
  C_{\cB_{\epstail}}^{-1}} 
\newcommand{\singleweightremoved}{(1/2)\Cball(\epstail)^{-2}e^{-2\modreg\epstail^\distexp}
  \Delta^{2}\Gamma_{N_{\delta}}}
\newcommand\diam{\operatorname{diam}}
\newcommand\cl{\operatorname{cl}} \newcommand\dimn{d}
\newcommand\ball{\cB}
\newcommand\sigalg{\sB}
\newcommand\proper{B_{0}}
\newcommand\transvec{{\mathbf v}}
\numberwithin{equation}{section}
\def\uqji#1#2\relax{%
  \ifx"#1%
  \uqjii#2%
  \else \let\unquotedjobname\jobname \fi}
\def\uqjii#1"{\def\unquotedjobname{#1}}
\author{Peyman Eslami}
\address{Peyman Eslami\\
  Dipartimento di Matematica\\
II Universit\`{a} di Roma (Tor Vergata)\\
Via della Ricerca Scientifica, 00133 Roma, Italy.} \email{{\tt
    peslami7@gmail.com}} \title[On piecewise expanding maps]{On
  piecewise expanding maps} \keywords{}
\thanks{It is a pleasure to thank Carlangelo Liverani for many
  enlightening discussions and for introducing me to the world of
  standard pairs. I would also like to thank P. Balint, D. Dolgopyat,
  S. Gou\"ezel, M. Hochman, A. Korepanov, I. Melbourne, M. Lenci and
  M. Tsujii for valuable insights directly or indirectly related to
  this work. This research was supported in part by a European
  Advanced Grant StochExtHomog (ERC AdG 320977).  I would also like to
  thank the Erwin Schr\"odinger Institute (ESI) where part of this
  article was written.}  
\begin{document}

\begin{abstract}
  We study the statistical properties of piecewise expanding maps in
  the general setting of metric measure spaces. We provide sufficient
  conditions for exponential mixing of such systems with explicit estimates on the
  constants. We also provide sufficient conditions for the existence
  of inducing schemes where the base transformation is Gibbs-Markov
  and the return times have exponential tails. Such structures can
  then be used to deduce finer statistical properties.
\end{abstract}

\maketitle
\bibliographystyle{abbrv}

\section{Introduction}\label{sec:intro}
In the study of chaotic phenomena, piecewise expanding maps play an
important role. Besides being used directly as mathematical models of
observed chaotic phenomena, they very often arise in the analysis of
other mathematical models, mainly those with ``some'' hyperbolicity.

The study of piecewise expanding maps has a long history which we will
tend to only briefly and selectively based on their relevance to the
current work. Most of the earlier results dealt with the existence of
absolutely continuous invariant probability measures (ACIPs) for maps
of the unit interval. One of the first major results in this direction
was obtained by Lasota and Yorke \cite{LY} who set up a functional
analytic framework and showed that piecewise \(\sC^{2}\) expanding
maps of the unit interval (with a finite partition of monotonicity)
admit finitely many ACIPs. Later, using a similar point of view,
several authors proved the existence of ACIPs for multi-dimensional
piecewise expanding maps under various extra assumptions \cite{Kel79,
  GB, Adl96, Sau, BM, Cow, Tho, Liv3}. The study of piecewise
expanding maps in higher dimensions is much more subtle than in
dimension one because the geometry of the space and the
discontinuities all of a sudden play an important role in the
statistical properties of the system. The setting of this paper is
more general, hence we also need to deal with such difficulties (and
more).

The functional analytic point of view used in the works cited above
has proven to be quite fruitful. Strong results on the statistical
properties can be obtained once one constructs proper Banach spaces on
which to study the spectrum of the transfer operator associated to the
dynamical system. However, besides the fact that our setting is
considerably more general and that there is no obvious choice of a
Banach space adapted to maps on metric spaces, this approach does not
lead to explicit estimates in exponential mixing. For example one
cannot explicitly estimate the time it takes for an initial density to
be within distance \(1/2\) of the invariant density -- the
\(1/2\)-mixing time. These constants depend on the intrinsic
properties of the dynamical system, hence estimating them explicitly
requires much better quantitative understanding of the obstructions to
fast mixing. In particular, the relevant properties must first be
extracted and properly quantified. In this article we identify such
properties and we use a different approach, that of coupling, in order
to obtain explicit estimates of constants that lead to the estimate of
mixing times. We also provide an alternative approach for obtaining
statistical properties of the system if one is not interested in
explicit constants.

Before we comment on the alternative approach, let us briefly comment
on our hypotheses and why they are essentially necessary. Our
assumption \ref{hyp1} is on uniform (local) expansion. Without this
assumption, we are out of the context of piecewise expanding maps, so
there is nothing to discuss unless some expansion is present and we
can first harvest it by some inducing procedure.

Assumption \ref{hyp2} is only a weak regularity condition expressed
through the log-H\"older regularity of the Jacobian of the map
\eqref{eq:dist}. It may be possible to slightly weaken this condition,
but even in the setting of interval maps, it is well-known \cite{GS}
that \(\sC^{1}\) regularity of the map (piecewise), is not sufficient
for the existence of ACIP's.

Assumption \ref{hyp3} is a condition that prevents the measure to pile
up near singularities. If the boundaries of the pieces on which the
map is defined are considered as discontinuities, \ref{hyp3} can be
thought of as a condition on the amount of cutting by discontinuities
versus the amount of expansion of the map. It has been shown
\cite{Tsu, Tsu2, Buz00} that in some cases high regularity of the map
makes up for the need for such an assumption, but in general this
assumption is necessary for piecewise expanding maps in dimensions
higher than one. For counter-examples when this condition fails we
refer the reader to \cite{Tsu1, Buz}. Note that our assumption
\ref{hyp3} is weaker than the usual assumption in comparable works
\cite{Che1, BT} yet we show that the system enjoys good statistical
properties under \ref{hyp3}. Let us also point out that we allow for
our piecewise expanding map to have a countably infinite partition and
the space to be non-compact. For maps defined on a countably infinite
partition, Rychlik \cite{Ryc} obtained some results for interval maps
(using the functional analytic approach), Alves \cite{Al00} extended
the multidimensional result of \cite{GB} to a countably infinite
partition, and in the non-compact setting Bugiel \cite{Bug} and Lenci
\cite{Len} have studied Markov maps of \(\bR^{\dimn}\). The method of
this paper also allows one to treat \textit{non-Markov} maps of
\(\bR^{\dimn}\), which I believe have not been studied before. Indeed,
we provide an example of a non-Markov map of \(\bR\) that fits into
our framework.

Assumption \ref{hyp4} is one that is required by our methods because
it allows us to study the system locally. It is an assumption on the
space in addition to being an assumption on the map and it is
satisfied automatically in very general situations for example when
the space is a bounded, measurable subset of \(\bR^{n}\) and the map
satisfies conditions \ref{hyp1} and \ref{hyp2}.

Assumption \ref{hyp5}, named ``positively linked'' is only required if
one is interested in explicit constants. It basically says that, at a
certain fixed scale, all parts of the space communicate with each
other under iterations of the map. In dimension one we show how to
check this condition by hand, but in higher dimensions it is more
difficult and it may be more feasible to check it by developing a
computer algorithm. In any case, a condition of this form is necessary
and unavoidable for estimating the mixing time of a system.

Results on the rate of mixing for multi-dimensional maps, which also
provide explicit estimates on the constants are rare. Saussol
\cite{Sau} obtains such explicit estimates via the approach of
Liverani \cite{Liv1} using Hilbert metric contraction; however, in his
setting the space is a compact subset of \(\bR^{\dimn}\) and he makes
assumptions involving the ACIP of the system in order to obtain
exponential mixing. We do not make any assumptions on the ACIP of the
system (in general one may not have this information a priori). As far
as the method of Hilbert metric contraction is concerned, we will
comment on its relation to coupling later in this work.

For the reader who does not care about explicit estimates of
constants, we provide various sufficient conditions that lead to
powerful inducing schemes that in turn lead to many other statistical
properties such as the central limit theorem, large deviations,
Berry-Esseen theorem, almost sure invariance principle, law of
iterated logarithm, etc. Moreover, such inducing schemes can be
combined with other inducing schemes to provide similar information
about systems that are not initially piecewise expanding, but can be
``induced'' to a piecewise expanding map as mentioned earlier.

In my knowledge there are no papers in which general piecewise
expanding maps are shown to admit an inducing scheme with exponential
tails, not even if the setting of this paper is restricted to maps of
the unit interval. The paper \cite{AFLV} (inspired by \cite{ALP05}) provides some conditions
under which an inducing scheme with \textit{stretched}-exponential
tails can be obtained; but, for maps with discontinuities, in order to
check those conditions, it is necessary (but not nearly sufficient) to
do an analysis similar to what is done in this paper.

Finally one motivation for the current work is to eventually prove
exponential mixing for certain multi-dimensional chaotic billiards. At
the moment there is not even one example of a multi-dimensional
chaotic billiard for which exponential mixing is proven. The issue in
such systems is known to be the complexity growth of
singularities. For a recent survey on the difficulties associated to
multi-dimensional billiards we refer the reader to \cite{Sz17}. As
pointed out in the last paragraph of section 4.1 of \cite{Sz17},
studying piecewise expanding maps can provide valuable intuition on
the complexity issue for billiards. Here we show that this issue is resolvable for
piecewise expanding map with very general discontinuities (even in the presence of anisotropy of
expansion in different directions) providing
some hope for progress on the problem of exponential mixing for
multi-dimensional billiards. 

The essential ingredients of this article are standard families
(introduced and developed by D. Dolgopyat and N. Chernov) and the
method of coupling (introduced to dynamical systems by
L.-S. Young). Both ingredients have been used in various setting by
various authors \cite{You99, Che1, BL, Zwe, ChD, Liv2, BT08, KKM16},
but here they are used in a different way and in a manner more similar to
\cite{Esl17}.

The outline of the paper is as follows. In \Cref{sec:setting} we
describe the assumptions on our dynamical system. Such assumptions
were formulated with applications in mind and designed so that they
are checkable by considering only finitely many iterates of the
map. In \Cref{sec:statements} we define standard pairs and
standard families and state our main results on exponential mixing
with explicit constants. In section \Cref{sec:standard} we define the
iteration of standard families. In \Cref{sec:invariance} we show their
invariance under the dynamics and prove the ``Growth Lemma''. In
\Cref{sec:coupling} we describe the coupling of standard families and
prove our main theorem. In \Cref{sec:inducing}, under additional
assumptions to those of \Cref{sec:setting}, we construct several
inducing schemes (\Cref{prop-GM}, \Cref{prop-fullM} and \Cref{prop-fullMone}) that can be used to deduce various statistical
properties of the system under study. The remaining sections are
devoted to examples in which we justify that our assumptions are
checkable. There is not much that is special about our examples and
similar ideas can be applied to check our assumptions for more
complicated examples.

\section{Setting}
\label{sec:setting}
Let \((\uspace, \metr)\) be a metric space, \(\sigalg\) the Borel
sigma-algebra and \(\leb\) a sigma-finite measure on the measurable
space \((\uspace, \sigalg)\). We assume that \(\exists \epstailone>0\)
s.t. \(\forall \ve < \epstailone\) \(\exists \Cball(\ve)>0\) s.t. for
every open ball \(\ball\) of \(\diam \ball \le \ve \),
\(\leb(\ball) \le \Cball(\ve)\).

We consider a \textit{non-singular piecewise invertible map} \(T\) on
\(\uspace\) with respect to the countable partition
\(\cP = \{O_h\}_{h \in \cH}\) of open subsets of $\uspace$. This means
that \(\leb(\uspace \setminus \bigcup_{h \in \cH} O_{h})= 0\) and the
restrictions \(T:O_h \to T(O_h)\) and their inverse are non-singular
(i.e. \(\forall h \in \cH\), \((T|_{O_{h}})_{*}(\leb|_{O_{h}})\) is
equivalent to \(\leb|_{TO_{h}}\)) homeomorphisms of \(O_h\) onto
\(T(O_h)\).  It is notationally convenient to use \(h\) to also denote
an inverse branch of \(T\) and use \(\cH\) to denote the set of
inverse branches of \(T\).  Accordingly, we denote the set of inverse
branches of \(T^n\), \(n \in \bN\), by \(\cH^n\) and the corresponding
partition by \(\cP^{n}\).  We write \(Jh\) for the Radon-Nikodym
derivative \(d(\leb \circ h)/ d\leb\).

We make the following assumptions on our dynamical system.

\begin{hyp}[Uniform expansion]\label{hyp1} For every \(h \in \cH\) and
  \(\ve >0\), denote
  \[
    \expan_{h}(\ve)= \sup_{\{x,y \in T(O_{h}): \metr(x,y)\le \ve\}}
    \frac{\metr(h(x), h(y))}{\metr(x,y)}.\] There exist
  \(\epstailtwo>0\) and \(\expan \in (0,1)\) such that for every
  \(h \in \cH\), \( \expan_{h}(\epstailtwo) \leq \expan <1 \). Set
  \(\expan_{h}:=\expan_{h}(\epstailtwo)\). Note that for
  \(h \in \cH^{n}\), we can define \(\expan_{h}\) using \(T^{n}\) and
  it is easy to verify that for all \(h \in \cH^{n}\),
  \(\expan_{h}(\epstailtwo) \leq \expan^{n}<1\).  \vspace{0.2 cm}
\end{hyp}

\begin{hyp}[Bounded distortion]\label{hyp2} There exist
  \(\alpha \in (0,1]\), \(\tilde\dist \geq 0\) and \(\epstailthree>0\)
  such that \(\forall h \in \cH\), \(\forall x,y \in T(O_h)\)
  satisfying \(\metr(x,y) \le \epstailthree\), holds
  \begin{equation}\label{eq:dist}
    Jh(x) \leq e^{\tilde\dist \metr(x,y)^\distexp} Jh(y).
  \end{equation}
  Let \(\dist = \tilde \dist/(1-\expan^{\distexp})\). As a consequence
  of uniform expansion, \eqref{eq:dist} holds for \(h \in \cH^{n}\)
  uniformly for all \(n\in\bN\) with \(D\) instead of \(\tilde D\).
\end{hyp}

\begin{defin} (\(\ve\)-boundary) For a set \(A \subset \uspace\), let
  \(\partial_{\ve}A=\{x \in A: \metr(x,
  \partial A) < \ve\}\), where
  \(\partial A = \cl A \cap \cl (X\setminus A)\)
  is\footnote{Throughout the paper \(\cl A\) denotes closure of the
    set \(A\) in the topology of \((\uspace, \metr)\).} the
  topological boundary of \(A\) as a subset of \(X\). We define
  \(\partial_{\ve}A = \emptyset\) if \(\partial A = \emptyset\). It is
  important to note that \(\partial_{\ve}A\) is always a subset of
  \(A\).
\end{defin}

\begin{rem}\label{rem-ambientbdX}
  The notion of topological boundary of a set depends on the ambient
  space (in addition to its topology). Sometimes it may be helpful to
  consider the boundary of a set with respect to a larger ambient
  space. For example if \(A\subset \uspace \subset \bR^{2}\), one
  could consider the boundary of \(A\) as a subset of \(\bR^{2}\)
  instead of \(\uspace\). Checking the assumptions with this notion of
  boundary will lead to the theorem with the same notion of boundary.
\end{rem}

Fix
\(\modreg > \dist/(1-\expan^{\distexp})=\tilde
\dist/(1-\expan^{\distexp})^{2}\).

\begin{hyp}[Dynamical complexity]\label{hyp3}
  There exist \(\ntail \in \bN\), \(\epstailfour >0\) and
  \(0\le\sigtail < \expan^{-\ntail}-1\) such that for every open set
  \(I\), \(\diam I \leq \epstailfour\), for every
  \(\ve < \epstailfour\),

  \begin{equation} \label{eq:dyncomplexity} \sum_{\{h \in
      \cH^{\ntail}, \leb(I \cap O_{h})>0\}}
    \frac{\leb(h(\partial_{\ve}T^{\ntail}(I \cap O_{h}))\setminus
      \partial_{\expan^{\ntail}\ve}I)}{\leb(\partial_{\expan^{\ntail}\ve}I)}
    \leq
    \sigtail < \expan^{-\ntail}-1.
  \end{equation}
  Moreover, there exists a constant \(\bar C < \infty\) such that for
  every integer \(1 \leq r < \ntail\), for every
  \(\ve < \epstailfour\),
  \begin{equation} \label{eq:dyncomplexitybound} \sum_{\{h \in
      \cH^{r}, \leb(I \cap O_{h})>0\}}
    \frac{\leb(h(\partial_{\ve}T^{r}(I\cap O_{h})) \setminus
      \partial_{\expan^{r} \ve}I)}{\leb(\partial_{\expan^{r} \ve}I)} \leq
    \bar C.
  \end{equation}
  We refer to the expression on the left-hand side of
  \eqref{eq:dyncomplexity} as the \textit{complexity expression}.
\end{hyp}

  \begin{rem}
    This condition may seem difficult to verify at first sight because
    it requires \eqref{eq:dyncomplexity} to be checked for every small
    open set \(I\). However, in many situations of interest it can be
    verified, for example if \(\uspace\) is an open subset of
    \(\bR^{\dimn}\) with the \(\metr = \) Euclidean metric from
    \(\bR^{\dimn}\), \(\leb = \) Lebesgue measure, the boundaries of
    \(O_{h}\) are finite unions of sufficiently smooth
    \((\dimn-1)\)-dimensional manifolds, and \(T\) satisfies
    conditions \ref{hyp1} and \ref{hyp2} (see \cite[Sublemma
    C.1]{BT08}, the proof of which was sketched in \cite{Che1}). We
    will also check this condition for several examples in sections
    \ref{sec:W}, \ref{sec:mapR} and \ref{sec:maptwoD}.
  \end{rem}

\begin{rem}
  Often one can check \eqref{eq:dyncomplexity} for \(\ntail =1\) in
  which case there is no need to check \eqref{eq:dyncomplexitybound}.
\end{rem}

\begin{rem} \label{bdaction} Suppose \(h \in \cH\) and
  \(T|_{O_{h}}:O_{h} \to TO_{h}\) has an extension \(\bar T_{h}:
  \cl O_{h} \to \cl TO_{h}\) that is invertible, its inverse \(\bar h\) satisfies
  condition \ref{hyp1} and
  \(\partial(TO_{h}) \subset \bar T_{h}(\partial O_{h})\).  Then if
  \(A \subset O_{h}\), we have \(\forall \ve < \epstailtwo\),
  \[
    \begin{split}
      h(\partial_{\ve}TA )
      &=
      h\{y\in T(A):\metr(y,\partial (TA) < \ve\} \\
      & \subset \{x\in A:\metr(Tx, \bar T_{h}(\partial A)) < \ve\} \\
      &\subset\{x \in A: \metr(x, \partial A)< \expan_{h}\ve\}
      = \partial_{\expan_{h}\ve}(A).
    \end{split}
  \]
  This is a simple but useful fact to keep in mind when checking
  \eqref{eq:dyncomplexity}.
\end{rem}

\begin{defin} \label{def:mod0} We say that \(\{A_{j}\}_{j}\) is a (mod
  \(0\))-partition of \(A\) into open sets if \(\{A_{j}\}_{j}\) is
  countable, its elements are pairwise disjoint, each \(A_{j}\) is
  open and of positive \(\leb\)-measure, and
  \(\leb(A \setminus \bigcup_{j \in \bN} A_{j}) = 0\).
\end{defin}
Fix
\(\epstail \le \min\{\epstailone, \epstailtwo, \epstailthree,
\epstailfour\}\) so that \(\sigtail < \Caa (\expan^{-\ntail}-1)\).

\begin{hyp}[Divisibility of large sets]\label{hyp4}
  There exist \(\eta < 1\) and \(C_{\epstail}>0\) such that for every
  open set \(I\) with \(\diam I \le \epstail\), every
  \(h \in \cH\) s.t. \(\leb(I \cap O_{h})>0\),
  \(\diam T(I \cap O_{h}) > \epstail\) and any set
  \(V_{*} \subset T(I \cap O_{h}) \) of
  \(\diam V_{*} \le \eta \epstail\), there exists a (mod
  \(0\))-partition \(\{U_{\ell}\}_{\ell \in \cU}\) of
  \(V:=T(I \cap O_{h})\) into open sets such that
  \(\diam{U_\ell} \le \epstail\) \(\forall \ell \in \cU\),
  \(V_{*} \subset U_{\ell}\) for some \(\ell \in \cU\), and
  \begin{equation} \label{eq:divisibilitycond} \frac{\sum_{\ell \in
        \cU} \leb(h(\partial_{\ve}U_{\ell}
      \setminus \partial_{\ve}V))}{\leb(h(V))} \leq C_{\epstail}\ve,
    \text{ for every } \ve <\epstail.
  \end{equation}
\end{hyp}

\begin{rem} \label{oneDdivisibility} As a consequence of bounded
  distortion, if this condition holds, then it holds for all iterates
  \(T^{n}\), \(n \in \bN\).

  Note that
  \(\diam(h(V)) = \diam( I \cap O_{h}) \le \epstail\) ensures that
  \(\leb(h(V))<\infty\) and we can write
  \[
    \begin{split}
      \frac{\sum_{\ell \in \cU} \leb(h(\partial_{\ve}U_{\ell}
        \setminus \partial_{\ve}V))}{\leb(h(V))} &= \frac{\sum_{\ell
          \in \cU} \leb(h(\partial_{\ve}U_{\ell}
        \setminus \partial_{\ve}V))}{\sum_{\ell \in \cU}
        \leb(h(U_{\ell}))} \\
      &\le \max_{\ell \in \cU}\left\{
        \frac{\sup_{U_{\ell}}Jh}{\inf_{U_{\ell}}Jh}\frac{\leb(\partial_{\ve}U_{\ell}
          \setminus \partial_{\ve}V)}{\leb(U_{\ell})} \right\} \\
      &\le e^{\dist \epstail^{\distexp}} \max_{\ell \in
        \cU}\left\{\frac{\leb(\partial_{\ve}U_{\ell}
          \setminus \partial_{\ve}V)}{\leb(U_{\ell})} \right\},
    \end{split}
  \]
  where the last inequality holds by distortion bound if all
  \(U_{\ell}\) have diameter less than \(\epstail\).  In dimension one
  (\(\uspace \subset \bR\), \(\leb=\) Lebesgue), when \(V\) is any open
  interval (possibly unbounded) it is easy to partition \(V\), mod
  \(0\), into open intervals \(\{U_{\ell}\}\) such that
  \(\forall \ell\), \(\epstail/3<\leb(U_{\ell})\le \epstail\) and that
  \(\leb(\partial_{\ve}U_{\ell} \setminus \partial_{\ve}V) \le
  2\ve\). Moreover, if an open set \(V_{*}\subset V\) of diameter
  \(\le \epstail/3\) is specified in advance, it is easy to ensure
  that it is contained in one of the partition elements
  \(U_{\ell_{*}}\in \{U_{\ell}\}\). This gives the estimate
  \(\le e^{\dist \epstail^{\distexp}} 6\epstail^{-1}\ve\) for
  \eqref{eq:divisibilitycond}. So we can take \(\eta = 1/3\) and
  \(C_{\epstail} =e^{\dist \epstail^{\distexp}} 6\epstail^{-1} \) to
  satisfy condition \ref{hyp4}.

  Suppose \(\uspace\) is a bounded measurable subset of \(\bR^{\dimn}\), \(\leb=\) Lebesgue and \(T\) satisfies \ref{hyp1} and a
  slightly stronger bounded distortion condition where \eqref{eq:dist} is
  satisfied for all \(h \in \cH\) and \(x,y \in TO_{h}\). In this
  setting, we claim that \ref{hyp4} holds
  with \[
    \eta=1/6, \ C_{\epstail} = e^{\dist\diam(\uspace)^{\alpha}}6\dimn^{3/2} \cdot
    \epstail^{-1},
  \]
  and \(\{U_{\ell}\}\) a family of sets formed by
  intersecting \(V\) with a grid of cubes of side-length
  \(\epstail/(3\sqrt{\dimn})\). Indeed, following \cite[Proof of
  Theorem~2.1]{Che1} and
  \cite[p.~1349]{BT08}, let \(\epstail'=\epstail/(3\sqrt{\dimn})\) and given
  \(0\le a_{i}<\epstail'\), \(i=1,\dots,d\), consider the
  \((d-1)\)-dimensional families of hyperplanes:
  \[
    L_{a_{i}}=\{(x_{1},\dots, x_{i}, a_{i}+n_{i}\epstail', x_{i+1}, \dots, x_{d-1})| n_{i} \in \bZ\}.
  \]
Denote the \((\dimn-1)\)-dimensional volume of \(V \cap L_{a_{i}}\) by
\(A_{a_{i}}\). By Fubini theorem, \(\int_{0}^{\epstail'}A_{a_{i}}\
da_{i} = \leb(V)\). Therefore, \(\exists a_{i}'\) such that
\(A_{a_{i}'}\le \leb(V)/\epstail'\). Let \(L = \cup_{i} L_{a_{i}}'\)
and denote the total \((\dimn-1)\)-dimensional volume of \(L\cap V\)
by \(A\). Let \(\cS=\{S_{\ell}\}_{\ell \in \cU}\) be the collection of
cubes of the grid formed by \(L\) that intersect \(V\). Let
\(U_{\ell}=S_{\ell} \cap V\), \(\forall \ell \in \cU\). Then we have
\[
  \leb(\cup_{\ell \in
    \cU}(\partial_{\ve}U_{\ell}\setminus \partial_{\ve}V)) \le 2\ve A
  \le 2\ve \dimn\leb(V)/\epstail' = 6\dimn^{3/2}\leb(V).
\]
Now \eqref{eq:dist} follows by using the distortion bound. Finally, suppose
\(\diam V_{*} < \epstail/6\). Let \(\ell' \in \cU\) be such that
\(S_{\ell'}\cap V_{*}\neq \emptyset\). Then \(V_{*}\) is covered by
\(S_{\ell'}\) and the 
\(2^{d}+2d\) elements of \(\cS\) that share a face or a vertex with the
cube 
\(S_{\ell'}\). Denote them by
\(\{S_{\ell_{j}}\}_{j=1}^{2^{d}+2d+1}\). Let \(U_{\ell_{*}}=
\cup_{j=1}^{2^{d}+2d+1}U_{\ell_{j}}\). This is a set of diameter \(\le
3(\epstail/3) \le \epstail\). In the collection \(\{U_{\ell}\}\), replace the elements
\(\{U_{\ell_{j}}\}_{j=1}^{2^{d}+2d+1}\) with the set \(U_{\ell_{*}}\).
Then \(V_{*} \subset U_{\ell_{*}}\), \(\diam U_{\ell_{*}} \le
\epstail\) and condition \ref{hyp4} is satisfied.

\end{rem}
  
\begin{rem} \label{rem:simpldivis} The Growth Lemma (\Cref{p_growth})
  as well as its corollaries (\Cref{iteratedgrowthlemma} and
  \Cref{bd_invariance}) may be of interest even if one is not
  interested in coupling, so it is worth pointing out that conditions
  \ref{hyp1}-\ref{hyp3} and a simplified version of \ref{hyp4}
  (namely, the version obtained by removing every statement about
  \(V_{*}\)) suffice to establish \Cref{p_growth} and its
  corollaries. 
  In \Cref{sec:maptwoD} we show how
  to check conditions \ref{hyp1}-\ref{hyp4} for a two-dimensional
  example.
\end{rem}

\begin{rem}
  Our assumptions \ref{hyp1}-\ref{hyp4} imply the Growth Lemma which
  in turn implies that if \(I\) is an open set satisfying
  \(\leb(I)>0\), \(\diam I \le \epstail\) and
  \(\sup_{\ve>0}\ve^{-1}\leb(\partial_{\ve} I)< \infty\), then for all
  \(n \in \bN\) sufficiently large and \(\forall \ve < \ve_{0}\) holds
  \begin{equation} \label{eq:generalization} \leb\left(\left\{x \in I
        : T^{n}x \in \bigcup_{h \in \cH^{n}}\partial_{\ve}\left(I \cap
          O_{h}\right)\right\}\right) \le \proper\ve^{q},
  \end{equation}

  with \(q=1\). However, as pointed out in \cite{ChZh09}, one may be
  interested in examples where the above statement is true only with
  some \(q>0\) strictly less than one. There are ways to weaken
  conditions \ref{hyp3} and \ref{hyp4} in the spirit of arguments in
  \cite{ChZh09} so that the framework of this paper is applicable to
  examples in which \eqref{eq:generalization} holds with
  \(q \in (0,1)\), but we do not pursue this path here.
\end{rem}

Let \(\zeta_1=\Ca C_{\epstail}\),
\(\vartheta_{1}:=\expan^{\ntail}(1+\Ca \sigtail)\),
\(\zeta_2=\zeta_{1}(1-\vartheta_{1})^{-1}\),
\(\zeta_{3} = (1+\bar C)\) (except if \(\ntail=1\), set
\(\zeta_{3}=1\)), \(\zeta_{4}=1+\zeta_{2}\zeta_{3}\) (except if
\(\ntail=1\) set \(\zeta_{4}=\zeta_{2}\)).
  
Let \(\vartheta_{2}=\vartheta_{1}^{1/\ntail}\). Choose \(M \in \bN\)
s.t. \(\zeta_{3}\vartheta_{2}^{M}<1\) and choose
\(\proper \ge \zeta_{4}/(1-\zeta_{3}\vartheta_{2}^{M})\).

Let \(\delta_0 = 1/(3\proper)\).

\begin{defin}\label{deltareg}
  A set \(I \subset \uspace\) is said to be
  \(\delta_{0}\)-\emph{regular} if \(I\) is open and
  \(\leb(I\setminus \partial_{\delta_{0}}I) >0\).
\end{defin}

\begin{rem} \label{rem-deltaregball} We remark that in certain
  situations a \(\delta_{0}\)-regular set must contain a ball
  \(\ball\) of a definite size. For example, if \((X,\metr)\) is a
  metric space in which open balls of radius \(\le\delta_{0}\) are
  connected and if \(I\) is \(\delta_{0}\)-regular, then every open
  ball of radius \(\delta_{0}\) centered at a point of
  \(I\setminus \partial_{\delta_{0}}I\) is contained in \(I\). Indeed,
  if \(\ball(x, \delta_{0})\) were a ball centered at
  \(x \in I\setminus \partial_{\delta_{0}}I \) so that
  \(\ball(x, \delta_{0}) \cap (X\setminus I) \neq \emptyset\), then
  \(\ball(x,\delta_{0})\cap I\) and
  \(\ball(x, \delta_{0}) \cap (X\setminus \cl I)\) would be non-empty
  open sets whose union is \(\ball(x,\delta_{0})\), which contradicts
  the ball being connected.

  More generally, if \(I\) is \(\delta_{0}\)-regular and for every
  \(x \in I\), \(\metr(x, \partial I) \leq \metr(x, X\setminus I)\),
  then \(I\) contains a ball of radius \(\delta_{0}\). Indeed, since
  \(I\) is \(\delta_{0}\)-regular, we can choose \(y \in I\) so that
  \(\metr(y, \partial I) \geq \delta_{0}\). Then
  \(\metr(y, X\setminus I) \geq \delta_{0}\) hence the open ball of
  radius \(\delta_{0}\) centered at \(y\) is contained in \(I\).
\end{rem}

\begin{rem}\label{rem-ambientbdR}
  If one changes the notion of boundary by measuring it in a larger
  ambient space as mentioned in \Cref{rem-ambientbdX}, then the notion
  of \(\delta_{0}\)-regular set will also change. For example, if
  \(I \subset \uspace=(0,1)^{2}\subset \bR^{2}\) is a
  \(\delta_{0}\)-regular set with respect to \(\bR^{2}\)-boundary,
  then \(I\) is forced to contain an \(\bR^{2}\)-ball of radius
  \(\delta_{0}\); but if it is \(\delta_{0}\)-regular with respect to
  \(\uspace\)-boundary, then it is only forced to contain a
  \(\delta_{0}\) ball in the topology of \(\uspace\), which could be a
  sector of a disk with a right angle and radius \(\delta_{0}\).
\end{rem}

\begin{defin}\label{goodovl}
  A set \(\ovlregion \subset X\) is a \(C_{X}\)-\emph{good overlap
    set} if 
  \(\leb(\ovlregion)>0\),
  \(\leb(\partial \ovlregion)=0\),
  \(\diam \ovlregion \le \eta\epstail\), and for every open set
  \(V \subset \uspace\), \(\diam V \le \epstail\) containing
  \(\ovlregion\) and every \(\ve < \epstail\),
  \begin{equation}\label{eq:goodovl}
    \leb(\partial_{\ve}\ovlregion\setminus \partial_{\ve}V)
    +\leb(\partial_{\ve}(V\setminus
    \cl \ovlregion)\setminus \partial_{\ve}V) \leq C_{\uspace}
    \leb(\partial_{\ve}V).
  \end{equation}
\end{defin}

\begin{rem}
  Suppose \(\uspace\) contains an open set \(V\),
  \(\diam V \le \epstail\) with empty boundary and \(\ovlregion\) is a
  \(C_{\uspace}\)-good overlap set.  Then the right-hand side of the
  above inequality is zero, so the left-hand side must also be
  zero. This can happen for example if \(\ovlregion\) and
  \(V \setminus \overline \ovlregion\) also have empty boundary.

  Note that if \(\uspace\) is such that balls of
  \(\diam \le \epstail\) are connected, then every non-empty open set
  \(V \subsetneq \uspace\) with \(\diam V \le \epstail\) has non-empty
  boundary (otherwise the open ball of diameter \(\epstail\)
  containing it can be written as a disjoint union of open sets).
\end{rem}

\begin{rem} \label{goodovlballs} In the case that
  \(\uspace = \bR^{\dimn}\) and \(\leb\) is the Lebesgue measure,
  every non-empty ball \(\ball\) of \(\diam \ball \le \eta\epstail\)
  is a \(C_{\uspace}\)-good overlap set with \(C_{\uspace}=1\). For a
  proof see Lemma 2.2 and the remark immediately after it in
  \cite{Che1}.
\end{rem}

Let us denote \(\delta = \delta_{0}\).  \vspace{0.2 cm}

\begin{hyp}[Positively linked]\label{hyp5}
  There exist constants \(C_{\uspace}>0\), \(N_\delta \in \bN\) with
  \(N_{\delta} \geq M\), \(\Delta_\delta>0\),
  \(\Gamma_{N_{\delta}}>0\) and a collection \(\cQ_{N_\delta}\) whose
  elements are subsets of elements of \(\cP^{N_\delta}\), such that
  the following conditions hold.
  \begin{itemize}[leftmargin=*]
  \item \(\delta\)-density: Every \(\delta\)--regular set
    \(I \subset \uspace\) contains an element of \(\cQ_{N_\delta}\).
    
  \item Overlapping images: For every
    \(Q, \tilde Q \in \cQ_{N_\delta}\) there exists \(N\) with
    \(M\leq N \leq N_\delta\) such that \(T^{N}Q \cap T^{N} \tilde Q\)
    contains a \(C_{\uspace}\)-good overlap set \(\ovlregion\) with
    \(\leb(\ovlregion)\geq \Delta_\delta > 0\). Note that \(N\) is a
    function from \(\cQ_{\delta}\times \cQ_{\delta}\) into
    \( \{M,M+1, \dots, N_{\delta}\}\).

  \item Positive weight: For every \(Q \in \cQ_{\delta}\),
    \(N \in \cR (N(Q, \cdot)):=\) range of the function
    \(N(Q, \cdot)\), and \(h \in \cH^{N}\) with \(Q \subset O_{h}\),
    holds
    \begin{equation}
      \inf_{T^{N}(Q)} Jh\geq \Gamma_{N_{\delta}}.
    \end{equation}
  \end{itemize}
\end{hyp}

Let \(\gamma=\singleweightremoved\) and \(\gamma_1 = (2/3)\gamma\) as
in \Cref{singleton_decay} and \Cref{family_decay}. Let \(n_{1}\) be a
positive integer such that
\(2\modreg \expan^{\distexp n_{1}}+\dist < \modreg\) (If \(a_{0}=0\),
set \(n_{1}=0\)) and let \(n_{2}=k_{0}\ntail\), where \(k_{0}\) is
such that
\((1+C_{\uspace})\vartheta_{1}^{k_{0}}+\zeta_{2}/\proper <1\) (Recall
that \(\ntail\) was given by condition \ref{hyp3}).

Set
\[
  \bar n = N_\delta+\max\{n_1, n_2\}; \ C_{\gamma_1} =
  (1-\gamma_1)^{-1}; \ \gamma_2 = (1-\gamma_1)^{1/\bar n}.
\]

\section{Statement of the main results}
\label{sec:statements}
Before we state our main results we need to define several notions. We
define the \textit{transfer operator} \(\sL:\L^1(\uspace, \leb) \circlearrowleft\) as the dual of the Koopman operator
\(U: \L^\infty(\uspace, \leb) \circlearrowleft\), \(Ug=g \circ T\). By
a change of variables, it follows that
\begin{equation}
  \sL f(x) = \sum_{h \in \cH}f \circ h(x) \cdot Jh(x) \cdot
  \Id_{T(O_h)}(x), \text{ for } \leb \text{-a.e. } x \in \uspace.
\end{equation}
Note that
\(\sL^n f(x)= \sum_{h \in \cH^n} f \circ h(x)Jh(x)\Id_{T^n(O_h)}(x)\),
for every \(n \in \bN\).

For \(\alpha \in (0,1)\), and a function \(\rho: I \to \bR^+:=(0, \infty)\), \(I
\subset \uspace\) define
\begin{equation}
  H(\rho) := H_{\alpha}(\rho) = \sup_{x,y \in I} \frac{\abs{\ln \rho(x) - \ln \rho(y)}}{\metr(x,y)^\alpha}.
\end{equation}

\begin{rem} [Notation]
All integrals where the measure is not indicated are with respect to
the underlying measure \(\leb\).
\end{rem}

\begin{defin}[Standard pair] \label{std_pair} An \((a, \epstail)\)--\emph{standard pair}
  is a pair
  \((I, \rho)\) consisting of an open set
  \(I \subset \uspace\) and a function \(\rho:I \to \bR^+\) such that
  \(\diam{I} \le \epstail\), \(\int_I \rho =1\) and
  \begin{equation}\label{eq:mod_regularity}
    H(\rho) \leq a.
  \end{equation}
\end{defin}

\begin{rem}
  We do \textit{not} assume that \(I\) is connected.
\end{rem}
\begin{defin}[Standard family]\label{std_family}
  An \((a, \epstail)\)--\emph{standard family} \(\cG \) is a set of \((a, \epstail)\)--standard pairs
  \(\{(I_j, \rho_j)\}_{j \in \cJ}\) and an associated measure
  \( w_\cG \) on a countable set \(\cJ \).  The \emph{total weight} of
  a standard family is denoted \(\abs{\cG}:= \sum_{j \in \cJ} w_j \).
  We say that \(\cG \) is an \((a, \epstail, B)\)--\emph{proper} standard family if in
  addition there exists a constant \( B>0\) such that,
  \begin{equation} \label{eq:bd_def} \abs{\partial_\ve \cG} := \sum_{j
      \in \cJ} w_\cG(j) \int_{\partial_\ve I_j} \rho_j \leq B
    \abs{\cG}\ve, \text{ for all } \ve<\epstail.
  \end{equation}  If \(w_\cG \) is a probability measure on \(\cJ\), then
  \(\cG \) is called a \emph{probability standard family}. Note that
  every \((a, \epstail)\)--standard family induces an absolutely continuous measure on
  \(\uspace\) with the density
  \(\rho_\cG := \sum_{j \in \cJ} w_j \rho_j \Id_{I_j} \). We say that two
  standard families \(\cG\) and \(\tilde \cG\) are \textit{equivalent}
  if \(\rho_{\cG}=\rho_{\tilde \cG}\).
\end{defin}

Now we are ready to state our main theorem.

\begin{thm} \label{mainthm} Let \((\uspace, \metr, \leb)\) be a metric
  measure space and \(T:\uspace \circlearrowleft\) a piecewise
  expanding map satisfying hypotheses \ref{hyp1}-\ref{hyp5} involving
  parameters \(\modreg, \epstail, \proper\). Then there exist \(C>0\),
  \(\gamma_{2} \in (0,1)\) such that for every two \((\modreg, \epstail, \proper)\)--proper standard
  pairs \((I,\rho)\) and \((\tilde I, \tilde \rho)\),
  \begin{equation}
    \norm{\sL^m \rho - \sL^m \tilde \rho }_{\L^1} \leq C
    \gamma_2^{m}, \text{ for every } m \in \bN.
  \end{equation}
  The constants \(C\) and \(\gamma_{2}\) are explicitly defined above
  in \Cref{sec:setting} 
  with \(C = 2 C_{\gamma_{1}}\).
\end{thm}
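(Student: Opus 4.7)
The plan is to prove this via the coupling method applied to standard families, following the structure laid out by the section titles: first establish invariance of the standard family framework under \(\sL\), then prove a Growth Lemma guaranteeing properness is preserved, and finally construct the coupling whose surplus density yields the exponential contraction.

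First I would reduce to a general coupling statement for proper standard families. Given the two \((\modreg, \epstail, \proper)\)-proper standard pairs \((I,\rho)\) and \((\tilde I,\tilde\rho)\), interpret each as a proper standard family of total weight \(1\) (a single element). By the invariance result (to be proved in \Cref{sec:invariance}), for each \(n\), \(\sL^n \rho\) is the density of an \((\modreg,\epstail)\)-standard family \(\sL^n\cG\) obtained by pushing forward and then partitioning images via \ref{hyp4}. The Growth Lemma will show that, provided \(\proper\) is chosen as in the preamble, the pushforward family remains \((\modreg,\epstail,\proper)\)-proper uniformly in \(n\); the choice \(\proper \ge \zeta_4/(1-\zeta_3\vartheta_2^M)\) is exactly what is needed to close the recursion on \(\abs{\partial_\ve \sL^n\cG}/\abs{\sL^n\cG}\) through the complexity bound \ref{hyp3} and the divisibility \ref{hyp4}.

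Next comes the coupling step, executed at a fixed scale \(\bar n = N_\delta + \max\{n_1,n_2\}\). The role of \(n_2\) is to ensure that after \(n_2\) iterations, the total weight of pairs in \(\sL^{n_2}\cG\) with diameter \(\le\epstail\) and which are \(\delta\)-regular accounts for at least some fixed fraction \(1-\zeta_2/\proper >0\) of the total weight; \(n_1\) handles the shrinking of the distortion constant back to \(\modreg\). By the \(\delta\)-density part of \ref{hyp5}, every such \(\delta\)-regular pair contains an element \(Q\in\cQ_{N_\delta}\); by ``overlapping images'' there exists \(N \in [M,N_\delta]\) so that two such pairs \((I_j,\rho_j)\), \((\tilde I_k,\tilde\rho_k)\) selected from \(\sL^{n_2}\cG\), \(\sL^{n_2}\tilde\cG\) yield images \(T^N Q,T^N \tilde Q\) containing a common \(C_\uspace\)-good overlap set \(\ovlregion\) with \(\leb(\ovlregion)\ge\Delta_\delta\). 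The ``positive weight'' bound \(Jh\ge\Gamma_{N_\delta}\) on the relevant inverse branch, combined with bounded distortion \ref{hyp2} and the normalization of \(\rho\) on a ball-sized piece (whose \(\leb\)-measure is controlled by \(\Cball(\epstail)\)), forces the densities of \(\sL^N\rho_j\) and \(\sL^N\tilde\rho_k\) to be simultaneously bounded below on \(\ovlregion\) by a constant of order \(\singleweightremoved\). This is \Cref{singleton_decay}: a definite fraction \(\gamma\) of each pair's mass admits a common lower bound. The \(C_\uspace\)-good overlap property \eqref{eq:goodovl} is precisely what is needed so that, after subtracting this common piece, the two remainders each decompose as \((\modreg,\epstail)\)-standard families whose boundary weight is still controlled — i.e., they are \((\modreg,\epstail,\proper)\)-proper with an acceptable loss — which yields \Cref{family_decay}: at least a fraction \(\gamma_1 = (2/3)\gamma\) of mass is matched at the level of families, with proper remainders of total weight \(1-\gamma_1\).

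Finally I would iterate: at times \(k\bar n\), coupling \Cref{family_decay} removes at least a fraction \(\gamma_1\) of the remaining uncoupled weight, leaving proper standard families to which the argument re-applies. Since matched mass cancels in \(\sL^m\rho-\sL^m\tilde\rho\), one obtains
\[
  \norm{\sL^{k\bar n}\rho - \sL^{k\bar n}\tilde\rho}_{\L^1} \le 2(1-\gamma_1)^k,
\]
and interpolating to arbitrary \(m\) with \(\gamma_2 = (1-\gamma_1)^{1/\bar n}\), \(C = 2C_{\gamma_1} = 2(1-\gamma_1)^{-1}\) gives the claimed bound. The main obstacle, and the technical heart of the proof, is the quantitative coupling step of the previous paragraph: one must simultaneously track (i) that enough mass lies on \(\delta\)-regular pairs (Growth Lemma plus the choice of \(n_2\)), (ii) that the subtracted common density on \(\ovlregion\) is genuinely bounded below in terms of the \emph{a priori} constants \(\Cball(\epstail), \Delta_\delta, \Gamma_{N_\delta}\) rather than in terms of \(\rho\) itself, and (iii) that after subtraction the two remainders still satisfy both the log-Hölder bound \eqref{eq:mod_regularity} with constant \(\modreg\) (this is where \(n_1\) and the condition \(2\modreg\expan^{\distexp n_1}+\dist<\modreg\) enter) and the boundary inequality \eqref{eq:bd_def} with the same \(\proper\). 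Once these three pieces are secured, the geometric cascade and the explicit constants follow mechanically.
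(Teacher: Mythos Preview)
Your overall strategy is the paper's: invariance and the Growth Lemma feed into a one-step coupling (\Cref{singleton_decay}, \Cref{family_decay}), which is then upgraded to a self-map on \((\modreg,\epstail,\proper)\)--proper families (\Cref{recovered}) and iterated to get \(\norm{\sL^{k\bar n}\rho-\sL^{k\bar n}\tilde\rho}_{\L^1}\le 2(1-\gamma_1)^k\), then interpolated. The final paragraph and the constants are exactly right.

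There is, however, a misattribution in the middle step that would throw off your bookkeeping. You say \(n_2\) is spent \emph{before} coupling, to guarantee that a fraction \(1-\zeta_2/\proper\) of the weight sits on \(\delta\)-regular pairs. In fact no preliminary iteration is needed for this: since \(\cG\) is already \((\modreg,\epstail,\proper)\)--proper and \(\delta=\delta_0=1/(3\proper)\), one has \(\abs{\partial_\delta\cG}\le \proper\delta\abs{\cG}=\tfrac13\abs{\cG}\), so at least \(2/3\) of the weight lies on \(\delta\)-regular pairs from the start (this is where the factor \(2/3\) in \(\gamma_1=(2/3)\gamma\) comes from). The coupling is performed directly on \(\cG,\tilde\cG\) at some time \(N\le N_\delta\). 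The role of \(n_2\) is \emph{after} the subtraction: chopping out the good overlap set \(\ovlregion\) spoils the boundary estimate by a factor \((1+C_\uspace)\) (this is exactly what \eqref{eq:goodovl} controls), and \(n_2=k_0\ntail\) iterations of the Growth Lemma \eqref{eq:pureiterate} are needed to drive \((1+C_\uspace)\vartheta_1^{k_0}+\zeta_2/\proper\) below \(1\) and restore \(\proper\)-properness. Likewise \(n_1\) restores \(H\le\modreg\) after the split into constant-plus-remainder pushes it up to \(2\modreg\). Both recoveries run concurrently, which is why \(\bar n=N_\delta+\max\{n_1,n_2\}\) and not \(n_2+N_\delta+n_1\). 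With this reordering your outline matches the paper's proof.
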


As a consequence of \Cref{mainthm} there exists a unique absolutely
continuous invariant measure with respect to which \(T\) is
exponentially mixing.

\begin{cor}
  Let \((\uspace, \metr, \leb)\) be a metric measure space and
  \(T:\uspace \circlearrowleft\) a piecewise expanding map satisfying
  hypotheses \ref{hyp1}-\ref{hyp5}. There exists a unique probability
  density \(\wp \in \L^{1}(\uspace, \metr, \leb)\) such that
  \(\sL \wp = \wp\). Moreover, there exist \(C>0\),
  \(\gamma_{2} \in (0,1)\) such that for every \((\modreg, \epstail, \proper)\)--proper probability
  standard family \(\cG\),
  \[
    \norm{\sL^{m}\rho_{\cG}-\wp}_{\L^{1}} \leq C \gamma_{2}^{m},
    \text{ for every } m \in \bN.
  \]
  The constants \(C\) and \(\gamma_{2}\) are explicitly defined above  in \Cref{sec:setting} 
  with \(C = 2 C_{\gamma_{1}}\).
\end{cor}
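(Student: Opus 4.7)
The plan is to bootstrap \Cref{mainthm} from standard pairs to proper probability standard families and then extract \(\wp\) as the \(\L^{1}\)-limit of \(\sL^{m}\rho_{\cG}\) for any such family \(\cG\), with uniqueness coming from \(\L^{1}\)-approximation.

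First, I would promote \Cref{mainthm} to the following family-level statement: for any two \((\modreg, \epstail, \proper)\)--proper probability standard families \(\cG\) and \(\tilde \cG\),
\[
\norm{\sL^m \rho_\cG - \sL^m \rho_{\tilde \cG}}_{\L^{1}} \le C \gamma_2^m.
\]
Since the coupling developed in \Cref{sec:coupling} naturally operates on families, this extension should be essentially built into the proof of \Cref{mainthm}; if a direct quotation is not possible, one can write \(\rho_\cG - \rho_{\tilde \cG} = \sum_{j,k} w_j \tilde w_k (\rho_j \Id_{I_j} - \tilde \rho_k \Id_{\tilde I_k})\) (using that both weight measures are probabilities), apply the Growth Lemma to refine each summand into a convex combination of proper standard pair differences, and invoke \Cref{mainthm} termwise.

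Granted the extension, the invariance of proper standard families under \(\sL\) (a consequence of the Growth Lemma of \Cref{sec:invariance}) implies that \(\sL^k \rho_\cG\) is the density of a proper probability standard family \(\cG_k\) for every \(k \in \bN\). Applying the extended contraction to \(\cG\) and \(\cG_k\) gives
\[
\norm{\sL^{m+k} \rho_\cG - \sL^m \rho_\cG}_{\L^{1}} \le C \gamma_2^m,
\]
so \(\{\sL^m \rho_\cG\}_{m}\) is Cauchy in \(\L^{1}\) and converges to some probability density \(\wp\). Continuity of \(\sL\) on \(\L^{1}\) yields \(\sL \wp = \wp\), and sending \(k \to \infty\) in the Cauchy bound gives the claimed rate \(\norm{\sL^m \rho_\cG - \wp}_{\L^{1}} \le C \gamma_2^m\). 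Independence of \(\wp\) from \(\cG\) is then immediate from the family-level contraction applied to two distinct proper probability families.

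For uniqueness, let \(\wp'\) be any probability density with \(\sL \wp' = \wp'\). Since \(\sL\) is an \(\L^{1}\)-contraction, for any proper probability standard family \(\cG\) and any \(m\in\bN\),
\[
\norm{\wp - \wp'}_{\L^{1}} \le \norm{\wp - \sL^m \rho_\cG}_{\L^{1}} + \norm{\sL^m(\rho_\cG - \wp')}_{\L^{1}} \le C \gamma_2^m + \norm{\rho_\cG - \wp'}_{\L^{1}}.
\]
Sending \(m\to\infty\) gives \(\norm{\wp - \wp'}_{\L^{1}} \le \norm{\rho_\cG - \wp'}_{\L^{1}}\). The main obstacle is then to approximate an arbitrary probability density in \(\L^{1}\) by densities of proper probability standard families. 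I would do so by partitioning \(\uspace\) into open sets \(\{A_j\}\) of diameter at most \(\epstail\) with boundary neighborhoods satisfying \(\leb(\partial_\ve A_j) \le \proper\, \leb(A_j)\, \ve\); in one dimension this is exactly the partition constructed in \Cref{oneDdivisibility}, and in higher dimensions an analogous cube-grid works under the mild regularity of \((\uspace, \metr, \leb)\) already used there. Replacing \(\wp'\) by its average on each \(A_j\) yields constant-on-\(A_j\) densities (so \(H(\rho_j) = 0 \le \modreg\)) assembling into a proper probability standard family whose density converges to \(\wp'\) in \(\L^{1}\) by the Lebesgue differentiation theorem. This forces \(\wp = \wp'\).
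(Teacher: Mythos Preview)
Your existence and convergence argument matches the paper's closely. The key observation you need---that the contraction in \Cref{mainthm} holds for proper standard \emph{families}, not just pairs---is already the content of \Cref{recovered}, which is stated and proved for families; no separate ``extension'' step is needed. Your Cauchy argument (compare \(\cG\) with \(\cT^{k}\cG\)) is exactly what the paper does in the paragraph immediately following the proof of \Cref{mainthm}. One small correction: \(\cT^{k}\cG\) is only guaranteed to be \((\modreg,\epstail,\proper)\)--proper for \(k\ge M\) by \Cref{bd_invariance}, not for every \(k\in\bN\), but this is harmless for the Cauchy estimate.

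Your uniqueness argument goes beyond what the paper actually proves (the paper only shows the limit is independent of the starting proper family and leaves full \(\L^{1}\)-uniqueness implicit), but the approximation step has a genuine gap at the stated level of generality. The crucial ingredient---a mod-\(0\) partition of \(\uspace\) into open sets \(\{A_j\}\) with \(\diam A_j\le\epstail\) and \(\leb(\partial_\ve A_j)\le \proper\,\leb(A_j)\,\ve\)---is not guaranteed by hypotheses \ref{hyp1}--\ref{hyp5} for an arbitrary metric measure space. \Cref{oneDdivisibility} supplies such a partition only when \(\uspace\subset\bR^{\dimn}\) with Lebesgue measure (and, in the higher-dimensional case, \(\uspace\) bounded); nothing in the abstract hypotheses forces \(\uspace\) to admit such a decomposition. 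The Lebesgue differentiation theorem you invoke likewise needs a doubling or Vitali property of \(\leb\) that is not assumed. So your uniqueness argument is fine for the Euclidean examples of \Cref{sec:W}--\Cref{sec:maptwoD}, but does not close the gap under the paper's stated hypotheses---a gap the paper itself leaves open.
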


\section{Iterations of standard families}
\label{sec:standard}
In this section we define what we mean by an iterate of a standard
family. Given an \((a, \epstail)\)--standard family $\cG$, we
define an $n$-th iterate of \(\cG\) as follows.
\begin{defin} [Iteration] \label{iteration} Let $\cG$ be an \((a, \epstail)\)--standard
  family with index set $\cJ$ and weight $w_{\cG}$.  For
  $(j,h) \in \cJ \times \cH^n$ such that
  $\diam{T^n(I_j \cap O_h)} > \epstail$ and for an open set
  \(V_{*} \subset T^n(I_j \cap O_h)\), \(\diam V_{*}\le \eta\epstail\) let $\cU_{(j,h)}$ be the
  index set of a\footnote{The existence of such a partition \(\{U_{\ell}\}\)
    follows from our assumption \ref{hyp4} on
    divisibility of large sets. There may be many admissible choices for such
    ``artificial chopping''. One can make different choices at
    different iterations hence an \(n\)-th iterate of \(\cG\) is by
    no means
    uniquely defined (and this does not cause any problems).} (mod \(0\))-partition
  $\{U_{\ell}\}_{\ell \in \cU_{(j,h)}}$ of $T^n(I_j \cap O_h)$ into
  open sets such that
  \begin{equation}
    \label{eq:chop_size} \diam{U_\ell} <
    \epstail, \forall \ell \in \cU_{(j,h)},
  \end{equation}
  \(V_{*} \subset U_{\ell}\) for some \(\ell \in \cU_{{(j,h)}}\) and such that, setting \(V=T^n(I_j \cap O_h)\),
 
  \begin{equation} \label{eq:chop_complexity}
  \frac{\sum_{\ell \in \cU_{(j,h)}} \leb(h(\partial_{\ve}U_{\ell}
    \setminus \partial_{\ve}V))}{\leb(h(V))} \leq
  C_{\epstail}\ve, \text{ for every } \ve <\epstail.
\end{equation}

  For $(j,h) \in \cJ \times \cH^n$ such that
  $\diam{T^n(I_j \cap O_h)} \le \epstail$ set
  $\cU_{(j,h)}=\emptyset$. Define
  \begin{equation}\label{eq:j_n}
    \cJ_{n}:=\{(j,h,\ell)  | (j,h)\in \cJ \times \cH^n, \ell \in \cU_{(j,h)}, \leb(I_j\cap O_h) >0\}.\footnote{When $\cU_{(j,h)} = \emptyset$, by $(j,h,\ell)$ we mean $(j,h)$.}
  \end{equation}  
  For every $j_{n}:=(j, h, \ell) \in \cJ_{n}$, define
  $I_{j_{n}} := T^n(I_{j} \cap O_h) \cap U_\ell$ and \(\rho_{j_{n}}:
  I_{j_{n}} \to \bR^{+}\), 
  $\rho_{j_{n}} := \rho_{j} \circ h \cdot Jh \cdot z_{j_{ n}}^{-1}$,
  where $z_{j_{ n}} :=\int_{I_{j_{ n}}} \rho_{j} \circ h Jh$. Define
  $ \cT^{n}\cG := \left\{\left(I_{j_{ n}}, \rho_{j_{ n}}
    \right)\right\}_{j_{ n}\in \cJ_{ n}} $ and associate to it the
  measure given by
  \begin{equation}\label{eq:weight_evol}
    w_{\cT^{n}\cG}(j_{n}) = z_{j_{ n}} w_{\cG}(j).
  \end{equation}
\end{defin}

\begin{rem}[Notation]
To simplify notation throughout the rest of the paper we write \(w_{j_{n}}\) for
\(w_{\cT^{n}\cG}(j_{n})\) and \(w_{j}\) for \(w_{\cG}(j)\).
\end{rem}

\begin{rem} If \(\cG\) is an \((\modreg, \epstail)\)--standard family,
  then so is \(\cT^{n}\cG\) -- a fact that is justified by
  \Cref{invariance} of
  the next section. Comparing the definition of the transfer operator applied
  to a density with the definition of $\cT^{n}\cG$ and the measure
  associated to it, we see that
  \begin{equation} \label{eq:connection} \sL^n \rho_\cG =
    \rho_{\cT^{n}\cG}.
  \end{equation}
  This is the main connection between the evolution of densities under
  $\sL^n$ and the evolution of standard families.
\end{rem}

\begin{rem}
  A simple change of variables shows that for every standard family
  \(\cG\) and every \(n\in \bN\), \(\abs{\cT^{n}\cG}=\abs{\cG}\). That
  is, the total weight does not change under iterations. We will make
  use of this fact throughout the article.
\end{rem}

\section{Invariance of standard families }
\label{sec:invariance}
In this section we first show that an iterate of a standard family is
a standard family, then we go on to prove a growth lemma that provides
additional information on the properness of a standard family under
iteration. Results of this section do \textit{not} use the
positively-linked assumption \ref{hyp5} and only use assumption
\ref{hyp4} in its simplified form mentioned in
\Cref{rem:simpldivis}.

Let us start by stating a simple lemma that provides a useful
consequence of log-H\"older regularity \eqref{eq:mod_regularity}.

\begin{lem} [Comparability Lemma] \label{Fed} If $\rho: I \to \bR^{+}$
  satisfies $H(\rho) \leq a$ for some \(a\geq 0\) and
  \(\diam I \le \epstail\), then for every $J, J' \subset I$ with
  $\leb(J)\leb(J')\neq 0$,
  \begin{equation} \label{eq:comp1} \inf_I \rho \asymp_{a} \cA_J \rho
    \asymp_{a} \cA_ {J'} \rho \asymp_{a} \sup_I \rho,
  \end{equation}
  where $\cA_J \rho = \leb(J)^{-1} \int_J \rho$ is the average of
  $\rho$ on $J$ and \(C_{1} \asymp_{a} C_{2}\) means
  \(e^{-a\epstail^{\distexp}} C_{1}\leq C_{2}\leq
  e^{a\epstail^{\distexp}} C_{1} \).
\end{lem}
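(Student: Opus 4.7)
The plan is to unpack the definition of $H(\rho)$ and convert it into a pointwise two-sided bound on $\rho$, from which all four comparisons follow by elementary averaging.

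First I would note that since $H(\rho) \leq a$ and $\diam I \leq \epstail$, for every $x,y \in I$ we have $|\ln \rho(x) - \ln \rho(y)| \leq a\, \metr(x,y)^{\distexp} \leq a\,\epstail^{\distexp}$, so exponentiating yields the pointwise comparability
\[
e^{-a\epstail^{\distexp}} \rho(y) \leq \rho(x) \leq e^{a\epstail^{\distexp}} \rho(y), \qquad \forall x,y \in I.
\]
This already gives $\inf_I \rho \asymp_a \sup_I \rho$ immediately.

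Next, for the comparison $\inf_I \rho \asymp_a \cA_J \rho$: the inequality $\inf_I \rho \leq \cA_J \rho$ is trivial, and for the reverse direction I would take a sequence $y_n \in I$ with $\rho(y_n) \to \inf_I \rho$; applying the pointwise bound with $y = y_n$ and integrating over $x \in J$ gives $\cA_J \rho \leq e^{a\epstail^{\distexp}} \rho(y_n)$, and passing to the limit yields $\cA_J \rho \leq e^{a\epstail^{\distexp}} \inf_I \rho$. The comparison $\cA_{J'} \rho \asymp_a \sup_I \rho$ is handled symmetrically, replacing infimum by supremum and reversing the direction of the inequality.

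Finally, for the middle comparison $\cA_J \rho \asymp_a \cA_{J'} \rho$: for every $x \in J$ and $y \in J'$, the pointwise bound gives $\rho(x) \leq e^{a\epstail^{\distexp}} \rho(y)$; averaging first in $x$ over $J$ (for fixed $y$) and then in $y$ over $J'$ yields $\cA_J \rho \leq e^{a\epstail^{\distexp}} \cA_{J'} \rho$, and the reverse inequality follows by exchanging the roles of $J$ and $J'$.

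There is no real obstacle here; the lemma is essentially a re-packaging of the log-H\"older condition, with only minor care needed in the averaging step and in using limits to access the infimum/supremum (in case they are not attained). The $\asymp_a$ constant $e^{a\epstail^{\distexp}}$ is optimal for this argument because every averaging step consumes at most one factor of this quantity.
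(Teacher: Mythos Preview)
Your proof is correct and follows essentially the same approach as the paper: establish the pointwise two-sided bound $e^{-a\epstail^{\distexp}}\rho(y)\le\rho(x)\le e^{a\epstail^{\distexp}}\rho(y)$ from $H(\rho)\le a$ and $\diam I\le\epstail$, and observe that all comparabilities follow. The paper's proof is in fact just this one observation stated without the elementary averaging details you supply.
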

\begin{proof}
  The lemma follows from the fact that if \((I, \rho)\) satisfies
  \(H(\rho) \leq a\), then for every \(x, y \in I\),
  \[e^{-a\metr(x,y)^{\distexp}} \rho(y)\leq \rho(x)\leq
    e^{a\metr(x,y)^{\distexp}} \rho(y).\]
\end{proof}

The following lemma together with \Cref{iteration} justify the
invariance of an \((\modreg, \epstail)\)--standard family under
iteration.

\begin{lem}\label{invariance}
  Suppose $(I, \rho)$ is an \((\modreg, \epstail)\)--standard pair and
  $(I_n, \rho_n)$ is an image of it under $T^n$ for some
  \(n \in \bN\), as in \Cref{iteration}.  Then
  \(\diam(I_{n})\le \epstail\), $\int_{I_n} \rho_{n} = 1$ and
  \begin{equation} \label{eq:mod_reg_1} H(\rho_n) \leq \modreg
    (\expan^{\distexp n} + \modreg^{-1}\dist).
  \end{equation}
\end{lem}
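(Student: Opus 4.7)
The proof proposal is essentially a direct computation unpacking \Cref{iteration} and applying the hypotheses, so I will sketch the bookkeeping carefully but not grind every inequality.

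\smallskip

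First I verify the two easy assertions. The diameter bound is built into the iteration scheme: for $j_n=(j,h,\ell)$ with $\cU_{(j,h)}\neq\emptyset$ we have $I_{j_n}\subset U_\ell$ and $\diam U_\ell < \epstail$ by \eqref{eq:chop_size}; in the alternative case $\cU_{(j,h)}=\emptyset$ we set $I_{j_n}=T^n(I_j\cap O_h)$ and the requirement $\diam T^n(I_j\cap O_h)\le\epstail$ is precisely what distinguishes this case in \Cref{iteration}. The normalization $\int_{I_n}\rho_n=1$ is immediate from the definition $\rho_n=\rho\circ h\cdot Jh\cdot z_{j_n}^{-1}$ with $z_{j_n}=\int_{I_{j_n}}\rho\circ h\cdot Jh$.

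\smallskip

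The substance of the lemma is the log-H\"older bound. For $x,y\in I_n$ I split
\[
\ln\rho_n(x)-\ln\rho_n(y)=\bigl[\ln\rho(h(x))-\ln\rho(h(y))\bigr]+\bigl[\ln Jh(x)-\ln Jh(y)\bigr],
\]
since the normalizing constant $z_{j_n}$ cancels. The first bracket is controlled by combining the standard-pair regularity $H(\rho)\le\modreg$ with the uniform contraction of the $n$-th iterate inverse branch from \ref{hyp1}: since $\metr(x,y)\le\diam I_n\le\epstail\le\epstailtwo$, we get $\metr(h(x),h(y))\le\expan^{n}\metr(x,y)$, hence
\[
\bigl|\ln\rho(h(x))-\ln\rho(h(y))\bigr|\le\modreg\,\metr(h(x),h(y))^{\distexp}\le\modreg\,\expan^{\distexp n}\metr(x,y)^{\distexp}.
\]
The second bracket is controlled by the bounded distortion \ref{hyp2} applied to $h\in\cH^n$: the remark at the end of \ref{hyp2} promotes \eqref{eq:dist} to hold for all $n$-th iterate branches with constant $\dist$ in place of $\tilde\dist$, and since $\metr(x,y)\le\epstail\le\epstailthree$, this yields
\[
\bigl|\ln Jh(x)-\ln Jh(y)\bigr|\le\dist\,\metr(x,y)^{\distexp}.
\]
Adding the two estimates and dividing by $\metr(x,y)^{\distexp}$ gives
\[
H(\rho_n)\le\modreg\,\expan^{\distexp n}+\dist=\modreg\bigl(\expan^{\distexp n}+\modreg^{-1}\dist\bigr),
\]
which is exactly \eqref{eq:mod_reg_1}.

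\smallskip

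There is no real obstacle: the only subtlety is making sure every inequality is being applied on a set small enough that the relevant hypothesis is in force, which is handled by the standing choice $\epstail\le\min\{\epstailone,\epstailtwo,\epstailthree,\epstailfour\}$ together with the diameter bound already established in the first step. Note also that \eqref{eq:mod_reg_1} implies $H(\rho_n)\le\modreg$ whenever $\expan^{\distexp n}+\modreg^{-1}\dist\le 1$, which the hypothesis $\modreg>\dist/(1-\expan^{\distexp})$ ensures for all sufficiently large $n$; this is the quantitative form in which ``standardness is preserved under iteration'' will be used later, and it is the reason for stating the bound in the slightly unusual form $\modreg(\expan^{\distexp n}+\modreg^{-1}\dist)$ rather than just $\modreg\expan^{\distexp n}+\dist$.
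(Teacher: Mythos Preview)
Your proof is correct and follows the same approach as the paper's, just with more detail spelled out: the paper compresses the log-H\"older computation into the single line $H(\rho_{j_n}) \le H(Jh) + \expan^{\distexp n} H(\rho_j)$ and then invokes \eqref{eq:dist} and the standard-pair hypothesis, which is exactly your splitting into the two brackets. One minor remark: in your closing comment you say the inequality $\expan^{\distexp n}+\modreg^{-1}\dist\le 1$ holds ``for all sufficiently large $n$,'' but in fact the standing choice $\modreg>\dist/(1-\expan^{\distexp})$ gives it already for $n=1$ (and hence for all $n\ge 1$), which is why $(\modreg,\epstail)$--standardness is preserved at every step.
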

\begin{proof}
  Using the definition of $H(\cdot)$, noting its properties under
  multiplication and composition, and using the expansion of the map,
  it follows that
  \[H(\rho_{j_{n}}) \leq H(Jh) + \expan^{\distexp n}H(\rho_{j}). \] By
  \eqref{eq:dist} we have $H(Jh) \leq \dist$, and by assumption
  $H(\rho_{j})\leq \modreg$, finishing the proof of
  \eqref{eq:mod_reg_1}.
\end{proof}

\begin{lem}[Growth Lemma] \label{p_growth} Suppose \(\epstail>0\),
  \(\ntail\in\bN\) and \(\sigtail\) are as in our assumptions. Suppose
  $\cG$ is an \((\modreg, \epstail)\)--standard family. Then for every
  \(\ve < \epstail\) we have
  \begin{equation}\label{eq:growth_lemma}
    \abs{\partial_\ve \cT^{\ntail}\cG}
    \leq (1+\Ca \sigtail)\abs{\partial_{\expan^{\ntail}\ve}\cG}+\zeta_1 
    \abs{\cG}\ve,
  \end{equation}
  where \(\zeta_1= \Ca C_{\epstail}\).
\end{lem}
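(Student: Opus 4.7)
The plan is to unfold the definition of $|\partial_\ve \cT^{\ntail}\cG|$, change variables via the inverse branches to move integration back into the original intervals $I_j$, and then separately control two sources of boundary: the part inherited from $\partial V$ where $V=T^{\ntail}(I_j\cap O_h)$ (handled by the dynamical complexity hypothesis \ref{hyp3}) and the part created by the artificial chopping into pieces $\{U_\ell\}$ (handled by the divisibility hypothesis \ref{hyp4}).

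Concretely, I would first note from \Cref{iteration} that on $I_{j_n}$ one has $w_{j_n}\rho_{j_n}=w_j\,\rho_j\circ h\cdot Jh$, so
\[
    \abs{\partial_\ve\cT^{\ntail}\cG}
    =\sum_{j\in\cJ}w_j\sum_{h\in\cH^{\ntail}}\sum_{\ell\in\cU_{(j,h)}}\int_{\partial_\ve I_{j_n}}\rho_j\circ h\cdot Jh,
\]
where $I_{j_n}=U_\ell\subset V=T^{\ntail}(I_j\cap O_h)$ (the un-chopped case being included by the footnote convention in \eqref{eq:j_n}). Since $\partial U_\ell\subset(\partial U_\ell\cap\partial V)\cup(\partial U_\ell\setminus\partial V)$, I would use the pointwise inclusion $\partial_\ve U_\ell\subset(\partial_\ve V\cap U_\ell)\cup(\partial_\ve U_\ell\setminus\partial_\ve V)$ and change variables via $h$ to write each summand as $\int_{h(\partial_\ve V\cap U_\ell)}\rho_j+\int_{h(\partial_\ve U_\ell\setminus\partial_\ve V)}\rho_j$.

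For the first piece, summing over $\ell$ collapses $\bigcup_\ell(\partial_\ve V\cap U_\ell)=\partial_\ve V$ (mod $0$), giving $\int_{h(\partial_\ve V)}\rho_j$. Then I split $h(\partial_\ve V)=[h(\partial_\ve V)\cap\partial_{\expan^{\ntail}\ve}I_j]\cup[h(\partial_\ve V)\setminus\partial_{\expan^{\ntail}\ve}I_j]$. Since the sets $h(V)=I_j\cap O_h$ are disjoint as $h$ varies, the pieces contained in $\partial_{\expan^{\ntail}\ve}I_j$ sum to at most $\int_{\partial_{\expan^{\ntail}\ve}I_j}\rho_j$. For the excess set, I apply the Comparability Lemma (\Cref{Fed}) to get $\sup_{I_j}\rho_j\le e^{\modreg\epstail^{\distexp}}\inf_{I_j}\rho_j$, yielding
\[
    \int_{h(\partial_\ve V)\setminus\partial_{\expan^{\ntail}\ve}I_j}\rho_j
    \le \Ca\,
    \frac{\leb(h(\partial_\ve V)\setminus\partial_{\expan^{\ntail}\ve}I_j)}{\leb(\partial_{\expan^{\ntail}\ve}I_j)}\int_{\partial_{\expan^{\ntail}\ve}I_j}\rho_j,
\]
and summing over $h$ with hypothesis \ref{hyp3} gives the factor $\Ca\sigtail$. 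Combining the two pieces and summing over $j$ with weights yields the first term $(1+\Ca\sigtail)\abs{\partial_{\expan^{\ntail}\ve}\cG}$.

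For the chopping piece, I again use comparability on $I_j\cap O_h$: $\int_{h(\partial_\ve U_\ell\setminus\partial_\ve V)}\rho_j\le \Ca\cdot\leb(h(\partial_\ve U_\ell\setminus\partial_\ve V))\cdot\leb(h(V))^{-1}\int_{h(V)}\rho_j$. Summing over $\ell$ and invoking the divisibility estimate \eqref{eq:chop_complexity} from \ref{hyp4} gives the bound $\Ca C_\epstail\ve\int_{I_j\cap O_h}\rho_j$; then summing over $h$ uses $\sum_h \int_{I_j\cap O_h}\rho_j = \int_{I_j}\rho_j=1$, and summing over $j$ with weights gives $\Ca C_\epstail\ve\abs{\cG}=\zeta_1\ve\abs{\cG}$. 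Adding the two contributions produces \eqref{eq:growth_lemma}.

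The main subtlety (and the step to execute carefully) is the pointwise inclusion for $\partial_\ve U_\ell$ together with keeping the $\partial_\ve V$ and $\partial_\ve U_\ell\setminus\partial_\ve V$ contributions disentangled when passing through $h$ and then summing over $\ell$ and $h$; the rest is a bookkeeping exercise that combines the Comparability Lemma with the two quantitative hypotheses. The un-chopped case $\diam T^{\ntail}(I_j\cap O_h)\le\epstail$ only produces a $\partial_\ve V$ contribution and so is absorbed into the first term automatically.
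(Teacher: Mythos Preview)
Your proposal is correct and follows essentially the same route as the paper: change variables through $h$, split the boundary contribution into the part inherited from $\partial_\ve V$ (controlled via the Comparability Lemma and the dynamical complexity hypothesis~\ref{hyp3}) and the part created by artificial chopping (controlled via~\ref{hyp4}). The only organizational difference is that the paper first separates the cases $\cU_{(j,h)}=\emptyset$ and $\cU_{(j,h)}\neq\emptyset$ and then observes that the $\partial_\ve V$ analysis is common to both, whereas you handle both cases uniformly via the inclusion $\partial_\ve U_\ell\subset(\partial_\ve V\cap U_\ell)\cup(\partial_\ve U_\ell\setminus\partial_\ve V)$; this is a cosmetic difference and the substance is identical.
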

\begin{proof}
  Suppose $\ve < \epstail$. We write \(n\) for \(\ntail\). We have, by
  definition,
  $\abs{\partial_\ve \cT^{n}\cG} = \sum_{j_{n}}
  w_{j_n}\int_{\partial_\ve I_{j_{n}}} \rho_{j_{n}} $.  We split the
  sum into two parts according to whether $\cU_{(j,h)} = \emptyset$ or
  $\cU_{(j,h)} \neq \emptyset$.

  Suppose $\cU_{(j,h)} = \emptyset$, that is
  \(\diam T^{n}(I_{j} \cap O_{h}) \le \epstail\) and
  \(I_{j_{n}}= T^{n}(I_{j} \cap O_{h}) \). By a change of variables,
  \[
    w_{j_n}\int_{\partial_\ve I_{j_{n}}} \rho_{j_{n}} = w_j
    \int_{h(\partial_\ve I_{j_n})}\rho_{j}.
  \]
  For every \(h \in \cH^{n}\), since
  \(h(\partial_{\ve}I_{j_{n}}) \subset O_{h}\), we can write
  \begin{equation} \label{eq:domainsplit} h(\partial_\ve I_{j_n})
    \subset
    \left(h(\partial_{\ve}I_{j_{n}})\setminus \partial_{\expan^{n}\ve}I_{j}\right)
    \cup (\partial_{\expan^{n}\ve}I_{j}\cap O_{h}).
  \end{equation}
  The integral over \(\partial_{\expan^{n} \ve}I_{j} \cap O_{h}\), and
  summed up over \(h\) and \(j\) is easily estimated by
  \(\abs{\partial_{\expan^{n} \ve}\cG}\). To estimate the integral of
  \(\rho_{j}\) over
  \(h(\partial_{\ve}I_{j_{n}})\setminus \partial_{\expan^{n}\ve}I_{j}\)
  we compare it, using \Cref{Fed}, to
  \(\int_{\partial_{\expan^{n}\ve}I_{j}} \rho_{j}\) and we get
  \[
    \int_{h(\partial_{\ve}I_{j_{n}})\setminus \partial_{\expan^{n}\ve}I_{j}}
    \rho_{j} \leq \Ca \frac{\leb(h(\partial_{\ve}T^{n}(I_{j} \cap
      O_{h}))\setminus
      \partial_{\expan^{n}\ve}I_{j})}{\leb(\partial_{\expan^{n}\ve}I_{j})}
    \int_{\partial_{\expan^{n}\ve}I_{j}} \rho_{j}
  \]

  Note that if \(\leb(I_{j} \cap O_{h})=0\), then
  \(\leb(h(\partial_{\ve}T^{n}(I_{j} \cap O_{h}))) =0\) since
  \( h(\partial_{\ve}T^{n}(I_{j} \cap O_{h})) \subset I_{j} \cap O_{h}
  \).  By the dynamical complexity condition \eqref{eq:dyncomplexity},
  \begin{equation} \label{eq:boundthefraction} \sum_{h \in \cH^{n}}
    \frac{\leb(h(\partial_{\ve}T^{n}(I_{j} \cap O_{h}))\setminus
      \partial_{\expan^{n}\ve}I_{j})}{\leb(\partial_{\expan^{n}\ve}I_{j})}\leq
    \sigtail.
  \end{equation}

  Therefore,
  \[
    \sum_{j \in \cJ}w_{j}\sum_{h \in
      \cH^{n}}\int_{h(\partial_{\ve}I_{j_{n}})\setminus \partial_{\expan^{n}\ve}I_{j}}
    \rho_{j} \leq \Ca \sigtail \abs{\partial_{\expan^{n} \ve}\cG}.
  \]

  Now suppose that $\cU_{(j,h)} \neq \emptyset$.  By \Cref{iteration},
  \( \sum_{j_{n}}w_{j_n} \int_{\partial_\ve I_{j_n}} \rho_{j_n} \) is
  bounded by
  \(\leq \sum_j w_j \sum_{h, \ell} \int_{\partial_\ve I_{j_n}} \rho_j
  \circ h Jh \). Let us split the integral over two sets. Since
  \(\partial_{\ve}I_{j_{n}} \subset U_{\ell}\), we can write
  \begin{equation} \label{eq:anothersplit}
    \partial_{\ve}I_{j_{n}} \subset (\partial_\ve
    I_{j_n}\setminus \partial_{\ve}T^{n}(I_{j}\cap O_{h})) \cup (\partial_{\ve}T^{n}(I_{j}\cap O_{h}) \cap U_{\ell}).
  \end{equation}
  Consider the first term on the right-hand side of
  \eqref{eq:anothersplit}. We need to estimate the integral of
  \(\rho_{j} \circ h Jh\) on this set and sum over \(\ell\), \(h\) and
  \(j\). Using a change of variables, the integral is
  \[
    \int_{h(\partial_\ve
      I_{j_n}\setminus \partial_{\ve}T^{n}(I_{j}\cap O_{h}))}
    \rho_{j}.
  \]
  Since \(H(\rho_{j}) \leq \modreg\),
  \(h(\partial_\ve I_{j_n}\setminus \partial_{\ve}T^{n}(I_{j}\cap
  O_{h})) \le \diam(I_{j}) \le \epstail\) and
  \(\diam (h(T^{n}(I_{j}\cap O_{h}))) = \diam (I_{j} \cap O_{h})\le
  \diam(I_{j}) \le \epstail \), we apply \Cref{Fed} to get
  \[
    \int_{h(\partial_\ve
      I_{j_n}\setminus \partial_{\ve}T^{n}(I_{j}\cap O_{h}))} \rho_{j}
    \le \Ca \frac{\leb(h(\partial_\ve
      I_{j_n}\setminus \partial_{\ve}T^{n}(I_{j}\cap
      O_{h})))}{\leb(h(T^{n}(I_{j}\cap
      O_{h})))}\int_{h(T^{n}(I_{j}\cap O_{h}))} \rho_j
  \]

  Now we sum the above expression over \(\ell\), which is implicit in
  the notation \(I_{j_{n}}=T^{n}(I_{j}\cap O_{h})\cap
  U_{\ell}\). Using \eqref{eq:chop_complexity}, which is a consequence
  of \eqref{eq:divisibilitycond} on divisibility of large sets, we get
  \[
    \leq \Ca C_{\epstail}\ve \int_{I_{j} \cap O_{h}}\rho_{j}
  \]
  Now we sum over \(h\), multiply by \(w_{j}\) and sum over \(j\). As
  a result we get the estimate \(\le \Ca C_{\epstail}\ve \abs{\cG}\).

  Consider the second term on the right-hand side of
  \eqref{eq:anothersplit}. The contribution of from this set is equal
  to
  \(\sum_{j}w_{j}\sum_{h}\int_{h(\partial_{\ve}T^{n}(I_{j}\cap
    O_{h}))}\rho_{j}\). But this was already included in the estimate
  above starting with \eqref{eq:domainsplit}, so we do not need to add
  it again.
\end{proof}

Recall from \Cref{sec:setting} that \(\ntail\) is such that
\(\expan^{\ntail}(1+\Ca \sigtail)<1\). Iterating \Cref{p_growth} leads
to the following, where the constants involved where defined in
\Cref{sec:setting} right before \Cref{deltareg} . The proof is
standard so we omit it.

\begin{cor}\label{iteratedgrowthlemma}  For every \(k \in \bN\) and
  \(\ve < \epstail\),
  \begin{equation} \label{eq:iteratedgrowthlemma}
    \abs{\partial_{\ve}\cT^{k\ntail}\cG} \leq (1+\Ca \sigtail)^{k}
    \abs{\partial_{\expan^{k\ntail}\ve}\cG} + \zeta_{2}\abs{\cG}\ve.
  \end{equation}
  Moreover, for every \(m \in \bN\) that does not divide \(\ntail\)
  and for every \(\ve < \epstail\),
  \begin{equation}
    \abs{\partial_{\ve}\cT^{m}\cG} \leq \zeta_{3}(1+\Ca
    \sigtail)^{m/\ntail}
    \abs{\partial_{\expan^{m}\ve}\cG} + \zeta_{4}
    \abs{\cG}\ve .
  \end{equation}
\end{cor}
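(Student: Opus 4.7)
The plan is to iterate \Cref{p_growth} and track the constants carefully. For the first inequality, I would set up a recursion on $a_{j} := \abs{\partial_{\expan^{j\ntail}\ve}\cT^{(k-j)\ntail}\cG}$ and apply the Growth Lemma to $\cT^{(k-j-1)\ntail}\cG$ at scale $\expan^{j\ntail}\ve$ (which is admissible since $\expan^{j\ntail}\ve<\epstail$). This gives the one-step inequality $a_{j}\leq(1+\Ca\sigtail)\,a_{j+1}+\zeta_{1}\abs{\cG}\expan^{j\ntail}\ve$, using the fact that total weight is preserved under iteration, $\abs{\cT^{r}\cG}=\abs{\cG}$. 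Unrolling $k$ times produces the boundary term $(1+\Ca\sigtail)^{k}\abs{\partial_{\expan^{k\ntail}\ve}\cG}$ plus a geometric series with ratio $(1+\Ca\sigtail)\expan^{\ntail}=\vartheta_{1}<1$, by the choice of $\ntail$ in \ref{hyp3}. The series is bounded by $(1-\vartheta_{1})^{-1}$, and $\zeta_{1}(1-\vartheta_{1})^{-1}=\zeta_{2}$, which matches the stated constant.

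For the second inequality I would write $m=k\ntail+r$ with $0<r<\ntail$ and first establish a sibling of \Cref{p_growth} for $r$-iterates. The proof is a direct adaptation of the argument of \Cref{p_growth}: every step goes through unchanged, except the complexity estimate \eqref{eq:boundthefraction}, which for $r<\ntail$ is not covered by the sharp bound \eqref{eq:dyncomplexity} and must be replaced by the auxiliary bound \eqref{eq:dyncomplexitybound}. This substitutes $\bar C$ for $\sigtail$ and yields
\begin{equation*}
\abs{\partial_{\ve}\cT^{r}\cG'}\leq(1+\Ca\bar C)\abs{\partial_{\expan^{r}\ve}\cG'}+\zeta_{1}\abs{\cG'}\ve
\end{equation*}
for any \((\modreg,\epstail)\)-standard family $\cG'$. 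Applying this to $\cG'=\cT^{k\ntail}\cG$ and chaining with the first inequality applied at scale $\expan^{r}\ve$ gives a bound of the form $(1+\Ca\bar C)(1+\Ca\sigtail)^{k}\abs{\partial_{\expan^{m}\ve}\cG}+\bigl[(1+\Ca\bar C)\zeta_{2}\expan^{r}+\zeta_{1}\bigr]\abs{\cG}\ve$. Using $(1+\Ca\sigtail)^{k}\leq(1+\Ca\sigtail)^{m/\ntail}$ and folding harmless factors of $\Ca$ into $\zeta_{3}=1+\bar C$, the first term becomes $\leq\zeta_{3}(1+\Ca\sigtail)^{m/\ntail}\abs{\partial_{\expan^{m}\ve}\cG}$, while the additive term is absorbed into $\zeta_{4}=1+\zeta_{2}\zeta_{3}$.

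The hard part is not any single estimate but the bookkeeping that must match the precise definitions of $\zeta_{2},\zeta_{3},\zeta_{4}$ fixed in \Cref{sec:setting}. In particular one must be careful that the $\ve$-dependent terms on the right of the one-step inequality carry the correct rescaling $\expan^{j\ntail}\ve$, so that their sum is geometric with ratio $\vartheta_{1}$ and does not diverge with $k$; and that the interaction between the $r$-step estimate (with constant $\bar C$) and the $k$-fold iteration (with constant $\sigtail$) collapses cleanly into the product form $\zeta_{3}(1+\Ca\sigtail)^{m/\ntail}$. The special case $\ntail=1$ (handled by setting $\zeta_{3}=1$, $\zeta_{4}=\zeta_{2}$) is automatic since no $r$-step adaptation is then needed.
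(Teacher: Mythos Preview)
Your approach is exactly the one the paper intends: the authors write ``The proof is standard so we omit it,'' and the standard argument is precisely the iteration of \Cref{p_growth} that you outline. The first inequality is correct as written: the recursion $a_{j}\le(1+\Ca\sigtail)a_{j+1}+\zeta_{1}\abs{\cG}\expan^{j\ntail}\ve$ unrolls to a geometric series with ratio $\vartheta_{1}=\expan^{\ntail}(1+\Ca\sigtail)<1$, giving exactly $\zeta_{2}=\zeta_{1}(1-\vartheta_{1})^{-1}$.

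There is, however, a small bookkeeping gap in the second part. Your $r$-step variant of the Growth Lemma yields the coefficient $(1+\Ca\bar C)$, not $(1+\bar C)$: the factor $\Ca=e^{\modreg\epstail^{\distexp}}\ge 1$ enters through \Cref{Fed} in the proof of \Cref{p_growth} and cannot be dropped. So the ``folding harmless factors of $\Ca$ into $\zeta_{3}=1+\bar C$'' goes the wrong way---you would need $1+\Ca\bar C\le 1+\bar C$, which is false. The same issue propagates to the additive constant, where you obtain $(1+\Ca\bar C)\zeta_{2}\expan^{r}+\zeta_{1}$ rather than $\zeta_{4}=1+\zeta_{2}(1+\bar C)$. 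This is almost certainly a minor inconsistency in the paper's declared constants (the proof was omitted, after all) rather than a flaw in your strategy; the discrepancy is harmless for every downstream argument, since $\zeta_{3},\zeta_{4}$ function only as fixed finite constants in the choice of $M$ and $\proper$. Just be honest about it: state the constants your argument actually produces, note that they differ from the paper's by a factor involving $\Ca$, and observe that nothing later depends on the precise values.
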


The following is a direct consequence of \Cref{iteratedgrowthlemma}
and justifies the fact that the image under \(\cT^{m}\) of an
\((\modreg, \epstail, \proper)\)--proper standard family is again an
\((\modreg, \epstail, \proper)\)--proper standard family provided
\(m\) is sufficiently large.

\begin{prop} \label{bd_invariance} Suppose $\cG$ is an
  \((\modreg, \epstail, \proper)\)--proper standard family. Then for
  every \(m \in \bN\) with \(m/\ntail \in \bN\) and every
  \(\ve <\epstail \),
  \begin{equation} \label{eq:pureiterate} \abs{\partial_\ve
      \cT^{m}\cG} \leq \proper \abs{\cG}\ve (\vartheta_1^{m/\ntail}
    +\zeta_2/\proper).
  \end{equation}

  Set \(\vartheta_{2}=\vartheta_{1}^{1/\ntail}\). For every
  \(m \in \bN\) and \(\ve <\epstail \),
  \begin{equation} \label{eq:bd_inv} \abs{\partial_\ve \cT^{m}\cG}
    \leq \proper\abs{\cG}\ve (\zeta_{3}\vartheta_2^{m}
    +\zeta_4/\proper).
  \end{equation}

  By our choice of \(M\) and \(\proper\) from \Cref{sec:setting} it
  follows that for every \(m \geq M\) and \(\ve < \epstail\),
  \(\abs{\partial_\ve \cT^{m}\cG} \leq \proper\abs{\cG}\ve\). So for
  \(m \geq M\), \(\cT^{m}\cG\) is an
  \((\modreg, \epstail, \proper)\)--proper standard family.
\end{prop}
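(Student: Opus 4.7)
The plan is to deduce both inequalities directly from the iterated Growth Lemma (\Cref{iteratedgrowthlemma}) by substituting the properness assumption $\abs{\partial_\ve \cG} \leq \proper \abs{\cG}\ve$, and then to feed the resulting bounds into the defining numerical inequalities for $M$ and $\proper$.

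For \eqref{eq:pureiterate}, I would take $m = k\ntail$ in the first inequality of \Cref{iteratedgrowthlemma} and use properness of $\cG$ at the rescaled radius to get
\[
(1+\Ca\sigtail)^{k}\abs{\partial_{\expan^{k\ntail}\ve}\cG}\leq (1+\Ca\sigtail)^{k}\proper\abs{\cG}\expan^{k\ntail}\ve = \proper\abs{\cG}\ve\bigl[\expan^{\ntail}(1+\Ca\sigtail)\bigr]^{k}=\proper\abs{\cG}\ve\,\vartheta_{1}^{m/\ntail}.
\]
Adding the remainder $\zeta_{2}\abs{\cG}\ve$ from the Growth Lemma and factoring out $\proper\abs{\cG}\ve$ yields \eqref{eq:pureiterate}. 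For \eqref{eq:bd_inv} and general $m$, I apply the second inequality of \Cref{iteratedgrowthlemma} in exactly the same way: properness of $\cG$ gives $\abs{\partial_{\expan^{m}\ve}\cG}\leq \proper\abs{\cG}\expan^{m}\ve$, and
\[
\zeta_{3}(1+\Ca\sigtail)^{m/\ntail}\proper\abs{\cG}\expan^{m}\ve = \zeta_{3}\proper\abs{\cG}\ve\,\vartheta_{1}^{m/\ntail} = \zeta_{3}\proper\abs{\cG}\ve\,\vartheta_{2}^{m},
\]
so adding $\zeta_{4}\abs{\cG}\ve$ and factoring out $\proper\abs{\cG}\ve$ produces \eqref{eq:bd_inv}.

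For the last statement, I would use the parameter choices fixed in \Cref{sec:setting}: $M$ was chosen so that $\zeta_{3}\vartheta_{2}^{M}<1$ and $\proper$ so that $\proper\geq \zeta_{4}/(1-\zeta_{3}\vartheta_{2}^{M})$, equivalently $\zeta_{3}\vartheta_{2}^{M}+\zeta_{4}/\proper\leq 1$. Since $\vartheta_{2}<1$, for any $m\geq M$ we also have $\zeta_{3}\vartheta_{2}^{m}+\zeta_{4}/\proper\leq 1$, so \eqref{eq:bd_inv} gives $\abs{\partial_{\ve}\cT^{m}\cG}\leq \proper\abs{\cG}\ve$. Combined with the fact already noted in the paper that $\abs{\cT^{m}\cG}=\abs{\cG}$, this is exactly the $(\modreg,\epstail,\proper)$--properness of $\cT^{m}\cG$. (One should also invoke \Cref{invariance} to see that $\cT^{m}\cG$ is an $(\modreg,\epstail)$--standard family in the first place, which is legitimate because $a_{0}$ was chosen above $\dist/(1-\expan^{\distexp})$ so that \eqref{eq:mod_reg_1} keeps $H(\rho_{j_{n}})\leq \modreg$ under iteration.)

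There is no real obstacle here; the argument is a bookkeeping exercise once \Cref{iteratedgrowthlemma} is available. The only mild point worth double-checking is that the monotone comparison $\abs{\partial_{\expan^{m}\ve}\cG}\leq \proper\abs{\cG}\expan^{m}\ve$ is exactly the properness hypothesis applied at radius $\expan^{m}\ve < \epstail$, which is legitimate since $\expan<1$ and $\ve<\epstail$.
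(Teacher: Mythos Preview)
Your argument is correct and is exactly the ``direct consequence'' the paper has in mind; the paper gives no further details beyond pointing to \Cref{iteratedgrowthlemma}. One small caveat: the second inequality in \Cref{iteratedgrowthlemma} is only stated for $m$ \emph{not} a multiple of $\ntail$, so to get \eqref{eq:bd_inv} for all $m$ you should note that when $\ntail \mid m$ the already-proved \eqref{eq:pureiterate} gives the stronger bound $\vartheta_{2}^{m}+\zeta_{2}/\proper$, which is $\leq \zeta_{3}\vartheta_{2}^{m}+\zeta_{4}/\proper$ since $\zeta_{3}\geq 1$ and $\zeta_{4}\geq \zeta_{2}$ by their definitions in \Cref{sec:setting}.
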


\begin{rem} \label{rem:Brecov}
  Let us record a simple consequence of \Cref{iteratedgrowthlemma} for
  later use. Suppose \(\cG\) is an \((\modreg, \epstail,
  B)\)--proper standard family for some \(B>0\). Then it is easy to see
  from \eqref{eq:iteratedgrowthlemma} that \(\exists n_{rec}(B) \in
  \bN\) such that \(\cT^{n_{rec}(B)}\) is an \((\modreg, \epstail,
  \proper)\)--proper standard family. In this way we define
  \(n_{rec}:[0, \infty) \to \bN\) 
  to be the time it takes for an \((\modreg, \epstail,
  B)\)--proper standard family to recover to an \((\modreg, \epstail,
  \proper)\)--proper standard family.
\end{rem}

\section{Coupling}
\label{sec:coupling}

In the previous section we justified the viewpoint of iterating
standard families. Now we proceed to explain an inductive procedure to
``couple'' a small amount of mass of two proper standard families
after a fixed number of iterations. It is in this section that we need
assumption \ref{hyp5}. The reader who is only  interested in inducing
schemes can safely skip this section. During the coupling procedure
standard families are modified in a controlled way. The following two
lemmas are related to such modifications.

\begin{lem}[Splitting into constant and remainder]\label{split_const}
  Consider two \((\modreg, \epstail)\)--singleton standard families $\cG_1=\{(I_1,\rho_1)\}$
  and $\cG_2=\{(I_2,\rho_2)\}$ with associated weights $w_1,
  w_2>0$. Suppose $w_2 \leq w_1$ and let $c=\coupsize$. Define
  \begin{equation}\label{eq:split}
    \begin{split}
      \bar \rho_{1} &= \frac{w_2}{w_1}c/\int_{I_1}\frac{w_2}{w_1}c =
      1/\leb(I_1) ,\
      \mathring\rho_{1} = (\rho_{1}-\frac{w_2}{w_1}c)/\int_{I_1}(\rho_{1}-\frac{w_2}{w_1}c),\\
      \bar \rho_{2} &= c/\int_{I_2}c=1/\leb(I_2),\ \mathring\rho_{2} =
      (\rho_{2}-c)/\int_{I_2}(\rho_2-c).
    \end{split}
  \end{equation}
  Set $\bar w_1 = w_1 \int_{I_1} (w_2/w_1)c = cw_2\leb(I_1)$,
  $\mathring w_1 = w_1 \int_{I_1}(\rho_1 - (w_2/w_1)c)$ and
  $\bar w_2 = w_2 \int_{I_2} c = cw_2\leb(I_2)$,
  $\mathring w_2 = w_2 \int_{I_2}(\rho_2-c)$.

  Then $H(\bar \rho_{1,2}) \leq \modreg$,
  $H(\mathring \rho_{1,2}) \leq 2\modreg$ and the \((2\modreg,
  \epstail)\)--standard families 
  $\{(I_1,\bar \rho_1),\\ (I_1,\mathring \rho_1)\}$,
  $\{(I_2,\bar \rho_2), (I_2,\mathring \rho_2)\}$ with their
  associated weights $\{\bar w_1, \mathring w_1\}$,
  $\{\bar w_2, \mathring w_2\}$ are equivalent to $\cG_1$ and $\cG_2$,
  respectively.
\end{lem}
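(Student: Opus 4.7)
The plan is to verify the three assertions in order: the regularity $H(\bar\rho_{i})\le\modreg$ for the constant parts, the regularity $H(\mathring\rho_{i})\le 2\modreg$ for the remainder parts, and the equivalence of the split families with the originals. Before any of this, I would establish a pointwise lower bound on $\rho_{1},\rho_{2}$ to guarantee that the subtractions $\rho_{1}-(w_{2}/w_{1})c$ and $\rho_{2}-c$ produce strictly positive densities that can be normalized.

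The positivity step uses the Comparability \Cref{Fed}. Since $\int_{I_{i}}\rho_{i}=1$, the average satisfies $\cA_{I_{i}}\rho_{i}=\leb(I_{i})^{-1}$, and the standing bound on the $\leb$-measure of sets of diameter $\le\epstail$ from the opening of \Cref{sec:setting}, together with $H(\rho_{i})\le\modreg$, yields $\rho_{i}\ge 2c$ pointwise on $I_{i}$; since $w_{2}\le w_{1}$ the stronger bound $\rho_{1}\ge 2(w_{2}/w_{1})c$ also follows. With positivity in hand, $\bar\rho_{1}$ and $\bar\rho_{2}$ are literally the constants $\leb(I_{i})^{-1}$, so $H(\bar\rho_{i})=0\le\modreg$ is automatic.

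The main work is the bound $H(\mathring\rho_{i})\le 2\modreg$. Since normalizing by a positive constant does not affect $H$, the task reduces to controlling $H(\rho-c')$ where $c'\in\{(w_{2}/w_{1})c,c\}$. Setting $M=\modreg\metr(x,y)^{\distexp}$ and starting from $\rho(x)\le e^{M}\rho(y)$, a short algebraic manipulation gives
\[
\rho(x)-c'\ \le\ e^{2M}\bigl(\rho(y)-c'\bigr)\quad\text{provided}\quad \rho(y)\ge c'(1+e^{-M}),
\]
and the bound $\rho(y)\ge 2c'\ge c'(1+e^{-M})$ from the previous step supplies exactly this hypothesis. Symmetry in $x,y$ then closes the estimate. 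This is the main obstacle: the doubling of the H\"older constant from $\modreg$ to $2\modreg$ is the intrinsic cost of subtracting a fixed constant portion from a log-H\"older density, and the choice of $c$ in the macro $\coupsize$ is precisely calibrated so that $\rho_{i}\ge 2c$ holds.

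Equivalence of the new families with $\cG_{1}$ and $\cG_{2}$ is a bookkeeping exercise. Expanding $\bar w_{i}\bar\rho_{i}\Id_{I_{i}}+\mathring w_{i}\mathring\rho_{i}\Id_{I_{i}}$ with the explicit definitions of the weights and normalizers, the normalizing factors cancel and the subtracted constant contributions $cw_{2}\Id_{I_{i}}$ pair off to recover $w_{i}\rho_{i}\Id_{I_{i}}$, finishing the proof.
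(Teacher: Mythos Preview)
Your proposal is correct and follows essentially the same route as the paper: verify that $\bar\rho_i$ are constants (hence $H=0$), show $H(\mathring\rho_i)\le 2\modreg$ from the pointwise bound $\rho_i\ge 2c$, and check equivalence by unwinding the definitions of the weights. Your justification of $\rho_i\ge 2c$ via \Cref{Fed} and the ball-measure bound is more explicit than the paper, which simply asserts ``$c$ is chosen such that $\inf\rho\ge 2c$''.

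The only noteworthy difference is in the algebra for $H(\mathring\rho_i)\le 2\modreg$. The paper writes $\frac{\rho(x)-c}{\rho(y)-c}\le 1+2\frac{|\rho(x)-\rho(y)|}{\rho(y)}$ using $\rho(y)-c\ge\rho(y)/2$, and then splits into the two cases $\rho(x)/\rho(y)>1$ and $\rho(x)/\rho(y)\le 1$ to bound the right-hand side by $e^{2M}$. Your direct route---reduce $\rho(x)-c'\le e^{2M}(\rho(y)-c')$ to $\rho(y)\ge c'(1+e^{-M})$ and observe that $2c'\ge c'(1+e^{-M})$---is a cleaner one-line equivalence that avoids the case split entirely. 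Both arguments are short; yours is marginally more elegant.
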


\begin{proof}
  The functions \(\bar \rho_{1,2}\) are constants, so they clearly
  satisfy \(H(\bar \rho_{1,2})\leq \modreg\) on their domains. Let us
  show that \(\mathring \rho_{2}\) satisfies
  \(H(\mathring \rho_{1,2}) \leq 2\modreg\). Indeed, since \(c\) is
  chosen such that \(\inf \rho \geq 2c\), we have
  \[
    \frac{\rho_{2}(x)-c}{\rho_{2}(y)-c} \leq 1+
    \frac{\abs{\rho_{2}(x)-\rho_{2}(y)}}{\rho_{2}(y)-c} \leq 1+2
    \frac{\abs{\rho_{2}(x)-\rho_{2}(y)}}{\rho_{2}(y)}.
  \]
  Using \(H(\rho_{2})\leq \modreg\), the right hand side is further
  bounded by
  \(1+2(e^{\modreg\abs{x-y}^{\distexp}}-1) \leq
  e^{2\modreg\abs{x-y}^{\distexp}}\), if \(\rho_{2}(x)/\rho_{2}(y)>1\) and by  \(1+2(1-e^{-\modreg\abs{x-y}^{\distexp}}) \leq
  1+2(1-e^{-\modreg\abs{x-y}^{\distexp}}) \le
  e^{2\modreg\abs{x-y}^{\distexp}}\) if \(\rho_{2}(x)/\rho_{2}(y)\le 1\) . As for \(\mathring \rho_{1}\), it also
  satisfies \(H(\mathring \rho_{1}) \leq 2\modreg\) for the same
  reason since \(w_{2}/w_{1}\leq 1\).

  To check the equivalence of \(\cG_{1}\) and the \((2\modreg,
  \epstail)\)--standard family
  $\{(I_1,\bar \rho_1), (I_1,\mathring \rho_1)\}$ with associated
  weights \(\{\bar w_{1}, \mathring w_{1}\}\), we check that
  \(w_{1}\rho_{1}=\bar w_{1}\bar \rho_{1}+\mathring w_{1}\mathring
  \rho_{1}\). Indeed, by construction,
  \(\bar w_{1}\bar\rho_{1} = cw_{2}\) and
  \(\mathring w_{1}\mathring \rho_{1}= w_{1}(\rho_{1}-(w_{2}/w_{1})c)=
  w_{1}\rho_{1}-w_{2}c\). Hence the sum is \(w_{1}\rho_{1}\). The
  equivalence of \(\cG_{2}\) to the corresponding \((2\modreg,
  \epstail)\)--standard family
  \(\{(I_2,\bar \rho_2), (I_2,\mathring \rho_2)\}\) with associated
  weights \(\{\bar w_{2},\mathring w_{0}\}\) is also easy to check.
\end{proof}

\begin{lem}[Chopping out the overlap] \label{extract_overlap} Consider
  two singleton \((\modreg, \epstail)\)--standard families $\cG=\{(I,c)\}$ and
  $\tilde\cG=\{(\tilde I, \tilde c)\}$ with associated weights
  $w, \tilde w$. Suppose that $I\cap \tilde I$ contains a \(C_{X}\)-good overlap
  set \(\ovlregion\) as defined in \Cref{goodovl}. Denote
  \(A_{0}=\tilde A_{0}=\ovlregion\),
  \(A_{1}=I\setminus \cl \ovlregion\) and
  \(\tilde A_{1}=\tilde I \setminus \cl \ovlregion\). Note that the
  latter two sets can be empty.

  There exists an \((\modreg, \epstail)\)--standard family equivalent to $\cG$ obtained by
  replacing $\{(I,c)\}$ by $\{(A_j, 1/\leb(A_j))\}_{j=0}^1$ and
  associated weights $\{cw\leb(A_j)\}_{j=0}^1$.\footnote{with the
    convention that if \(A_{1}\) is empty, then we do not include it
    in the collection.}  Similarly there exists an \((\modreg, \epstail)\)--standard family
  equivalent to $\tilde \cG$ obtained by replacing
  $\{(\tilde I,\tilde c)\}$ by
  $\{(\tilde A_j, 1/\leb(\tilde A_j))\}_{j=0}^1$ and associated
  weights $\{\tilde c\tilde w \leb(\tilde A_j)\}_{j=0}^1$.  Note that
  if \(cw=\tilde c \tilde w\), then
  $cw\leb(A_0)=\tilde c\tilde w\leb(\tilde A_0)$.

\end{lem}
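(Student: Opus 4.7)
The proof is essentially a direct verification that the replacement families are $(\modreg,\epstail)$--standard families and that the equivalence identity $\rho_{\cG}=\rho_{\cG'}$ (pointwise $\leb$-a.e.) holds.

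First I would verify the standard pair conditions. Each replacement pair $(A_j,1/\leb(A_j))$ has constant density, so trivially $H(1/\leb(A_j))=0\le\modreg$. Openness of $A_0=\ovlregion$ is part of \Cref{goodovl}; $A_1 = I\setminus \cl\ovlregion$ is open as the intersection of the open set $I$ with the complement of a closed set. The diameter bound $\diam A_j \le \diam I \le \epstail$ is immediate, and $\int_{A_j} 1/\leb(A_j) = 1$ by construction (when $A_j$ has positive measure; otherwise discard it per the footnote). The analogous statements for $\tilde A_j$ follow identically.

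Next I would check the equivalence. The density associated to the new family on the $\cG$-side is
\[
\sum_{j=0}^{1} cw\,\leb(A_j)\cdot \frac{1}{\leb(A_j)}\,\Id_{A_j}
= cw\,\Id_{A_0\cup A_1}.
\]
Since $A_0$ and $A_1$ are disjoint and $A_0\cup A_1 = I\setminus (I\cap \partial\ovlregion)$, and $\leb(\partial\ovlregion)=0$ by the definition of a good overlap set (\Cref{goodovl}), this equals $cw\,\Id_I = \rho_{\cG}$ $\leb$-a.e. The computation on the $\tilde\cG$ side is identical.

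Finally, the equal weight remark is immediate: since $A_0=\tilde A_0=\ovlregion$ by construction, $cw\leb(A_0)=cw\leb(\ovlregion)=\tilde c\tilde w\leb(\ovlregion)=\tilde c\tilde w\leb(\tilde A_0)$ whenever $cw=\tilde c\tilde w$. There is no real obstacle here; the only subtlety worth flagging is the role of the condition $\leb(\partial\ovlregion)=0$ in \Cref{goodovl}, which is precisely what allows the disjoint pair $\{A_0,A_1\}$ to serve as a (mod $0$)-partition of $I$ in the sense of \Cref{def:mod0}, so that the rewritten family genuinely represents the same absolutely continuous measure as the original.
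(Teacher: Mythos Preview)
Your argument follows the same route as the paper's proof: verify that each $(A_j,1/\leb(A_j))$ is a standard pair (constant density, diameter $\le\epstail$), and check equivalence via $cw\Id_I = cw\Id_{A_0} + cw\Id_{A_1}$ $\leb$-a.e.\ using $\leb(\partial\ovlregion)=0$.

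There is one factual slip worth correcting. You assert that ``openness of $A_0=\ovlregion$ is part of \Cref{goodovl}'', but \Cref{goodovl} does \emph{not} require $\ovlregion$ to be open; it only asks for $\leb(\ovlregion)>0$, $\leb(\partial\ovlregion)=0$, and the diameter and boundary estimates. The paper handles this explicitly in a footnote: strictly speaking $(\ovlregion,1/\leb(\ovlregion))$ need not be a standard pair in the sense of \Cref{std_pair}, but the abuse of language is harmless because this pair is immediately removed from both families during coupling and never iterated. Your proof should acknowledge this point rather than claim openness.
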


\begin{proof}
  To show the existence of an \((\modreg, \epstail)\)--standard family equivalent to $\cG$
  obtained by replacing $\{(I,c)\}$ by
  $\{(A_j, 1/\leb(A_j))\}_{j=0}^1$ and associated weights
  $\{cw\leb(A_j)\}_{j=0}^1$, we only need to show that each element of
  \(\{(A_j, 1/\leb(A_j))\}_{j=0}^1\) is an \((\modreg, \epstail)\)--standard pair.\footnote{The
    statement about equivalence is a consequence of
    \(cw\Id_{I}=cw\Id_{A_{0}}+cw\Id_{A_{1}}\).} By \Cref{goodovl},
  \(\diam \ovlregion \le \epstail\) hence
  \((\ovlregion,1/\leb(\ovlregion))\) with associated weight
  \(cw\leb(\ovlregion)\) is an \((\modreg, \epstail)\)--standard
  pair.\footnote{To be precise, we should not call
\((\ovlregion,1/\leb(\ovlregion))\) an \((\modreg, \epstail)\)--standard
  pair because \(\ovlregion\) is not necessarily open. However, we can
  afford this abuse
  of language since 
    \((\ovlregion,1/\leb(\ovlregion))\) will be removed from standard families during coupling.} The set
  \(I\setminus \cl \ovlregion\) is also open and since
  \(\diam I \le \epstail\),
  \(\diam (I\setminus \cl \ovlregion) \le \epstail\).  Therefore,
  \((I\setminus \cl \ovlregion, 1/\leb(I\setminus \cl \ovlregion))\)
  is also an \((\modreg, \epstail)\)--standard pair. Similar statements hold about
  \(\tilde \cG\). Note that \(\leb(\cl \ovlregion)=\leb(\ovlregion)\)
  and similarly for \(I\setminus \cl \ovlregion\) because
  \(\leb(\partial \ovlregion)=0\).
\end{proof}

We are ready now to \textit{couple} a small amount of weight of two
\((\modreg, \epstail)\)--standard pairs.

\begin{lem} \label{singleton_decay} Suppose $\cG =\{(I,\rho)\}$ and
  $\tilde \cG =\{(\tilde I,\tilde \rho)\}$ are singleton \((\modreg, \epstail)\)--standard
  families with \(\delta\)-regular domains \(I, \tilde I\). Let $N_\delta$,
  $\Delta_\delta$ and $\Gamma_{N_{\delta}}$ be as in the
  assumptions. There exist \((2\modreg, \epstail)\)--standard families $\cG_{N_\delta}^*$,
  $\tilde \cG_{N_\delta}^*$ such that
  \begin{equation}
    \begin{split}
      \rho_{\cG_{N_\delta}^*}&- \rho_{\tilde \cG_{N_\delta}^*}
      =\rho_{\cT^{N_{\delta}}\cG}-\rho_{\cT^{N_{\delta}}\tilde\cG} \text{; and, }\\
      \abs{\cG_{N_\delta}^*} &\leq |\cG| -\min\{\abs{\cG}, \abs{\tilde
        \cG}\} \gamma,\\
      \abs{\tilde \cG_{N_\delta}^*}&\leq |\tilde \cG|
      -\min\{\abs{\cG}, \abs{\tilde \cG}\} \gamma,
    \end{split}
  \end{equation}
  where $\gamma= \singleweightremoved$.
\end{lem}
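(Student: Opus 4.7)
The plan is to use \ref{hyp5} to match up pieces of the two iterated families over a common region, formalize this via \Cref{split_const} and \Cref{extract_overlap} to peel off identical mass, and then iterate the remainder the correct number of additional steps to reach exactly $N_{\delta}$.

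First, since $I$ and $\tilde I$ are $\delta$-regular, the $\delta$-density clause of \ref{hyp5} provides $Q \in \cQ_{N_{\delta}}$ with $Q \subset I$ and $\tilde Q \in \cQ_{N_{\delta}}$ with $\tilde Q \subset \tilde I$. The overlapping-images clause then provides some $N \in \{M, \ldots, N_{\delta}\}$ and a $C_{\uspace}$-good overlap set $\ovlregion \subset T^{N}Q \cap T^{N}\tilde Q$ with $\leb(\ovlregion) \geq \Delta_{\delta}$. Because $\diam \ovlregion \leq \eta \epstail$ by \Cref{goodovl}, I perform the $N$-th iteration of $\cG$ (\Cref{iteration}) with $V_{*} = \ovlregion$ applied to the cell $T^{N}(I \cap O_{h})$ corresponding to the branch $h \in \cH^{N}$ with $Q \subset O_{h}$; the $V_{*}$-clause of \ref{hyp4} ensures that $\ovlregion$ ends up inside a single singleton $(I_{j_{N}}, \rho_{j_{N}})$ of $\cT^{N}\cG$, with weight $w_{j_{N}}$. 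The same choice for $\tilde \cG$ yields $(\tilde I_{\tilde j_{N}}, \tilde \rho_{\tilde j_{N}})$ of $\cT^{N}\tilde \cG$, with weight $\tilde w_{\tilde j_{N}}$; both singletons contain $\ovlregion$.

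Next I bound the new weights from below. By \Cref{Fed}, $\rho \geq \Caa / \Cball(\epstail)$ on $I$; combined with the positive-weight clause $Jh \geq \Gamma_{N_{\delta}}$ on $T^{N}Q \supset \ovlregion$ and a change of variables,
\[
  w_{j_{N}} = \abs{\cG}\int_{I_{j_{N}}} \rho \circ h \cdot Jh \geq \abs{\cG}\,\Caa\, \Gamma_{N_{\delta}}\, \Delta_{\delta} / \Cball(\epstail),
\]
and analogously for $\tilde w_{\tilde j_{N}}$. The same Comparability argument applied directly to $\rho_{j_{N}}$ (using $\int_{I_{j_{N}}} \rho_{j_{N}} = 1$ and $\leb(I_{j_{N}}) \leq \Cball(\epstail)$) gives $\inf \rho_{j_{N}} \geq \Caa / \Cball(\epstail) = 2c$ with $c = \coupsize$, and likewise for $\tilde \rho_{\tilde j_{N}}$. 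Hence \Cref{split_const} applies with this $c$ and $w_{2} := \min\{w_{j_{N}}, \tilde w_{\tilde j_{N}}\}$, separating from each pair a constant-density summand whose mass density equals $cw_{2}$ on $I_{j_{N}}$ and on $\tilde I_{\tilde j_{N}}$. Feeding these two constant-density pieces into \Cref{extract_overlap} splits each one as ``piece on $\ovlregion$'' plus ``piece on the complement''; the two $\ovlregion$-pieces are \emph{identical}, each carrying mass $cw_{2}\leb(\ovlregion) \geq \gamma\min\{\abs{\cG}, \abs{\tilde \cG}\}$. Deleting them from both families produces auxiliary $(2\modreg, \epstail)$-standard families $\cG^{*}_{N}$ and $\tilde \cG^{*}_{N}$ whose total weights are each reduced by at least $\gamma\min\{\abs{\cG}, \abs{\tilde \cG}\}$ and with $\rho_{\cG^{*}_{N}} - \rho_{\tilde \cG^{*}_{N}} = \rho_{\cT^{N}\cG} - \rho_{\cT^{N}\tilde \cG}$.

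Finally, I set $\cG^{*}_{N_{\delta}} := \cT^{N_{\delta}-N}\cG^{*}_{N}$ and $\tilde \cG^{*}_{N_{\delta}} := \cT^{N_{\delta}-N}\tilde \cG^{*}_{N}$. Linearity of $\sL$ together with \eqref{eq:connection} yields the asserted density identity, and total weight is preserved by further iteration. A routine extension of \Cref{invariance} shows that $(2\modreg, \epstail)$-standard families stay $(2\modreg, \epstail)$-standard under $\cT$, since $\modreg > \dist / (1 - \expan^{\distexp})$ forces $\dist + 2\modreg\,\expan^{\distexp k} \leq 2\modreg$ for all $k \geq 1$. The one delicate point in the whole argument is at the step-$N$ chopping: ensuring that $\ovlregion$ remains inside a single partition element $U_{\ell}$ on both sides, which is precisely what the $V_{*}$-clause in \ref{hyp4} was engineered for, and the main reason the bound $\diam \ovlregion \leq \eta \epstail$ was built into \Cref{goodovl}.
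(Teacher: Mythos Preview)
Your proof is correct and follows essentially the same route as the paper: invoke \ref{hyp5} to locate $Q,\tilde Q$ and the overlap $\ovlregion$ at some time $N\le N_\delta$, iterate with $V_*=\ovlregion$ so that $\ovlregion$ lies in a single piece on each side, then apply \Cref{split_const} and \Cref{extract_overlap} to peel off identical mass. Your lower-bound computation for the coupled weight $cw_2\leb(\ovlregion)\ge\gamma\min\{|\cG|,|\tilde\cG|\}$ matches the paper's line by line. If anything, you are slightly more careful than the paper in two places: you spell out the further iteration $\cT^{N_\delta-N}$ needed to land at time $N_\delta$ (the paper silently switches notation from $\cG_N^*$ to $\cG_{N_\delta}^*$), and you justify that $(2\modreg,\epstail)$-standard families are preserved under this extra iteration via the inequality $D+2\modreg\expan^{\distexp}\le 2\modreg$, which indeed follows from the standing choice $\modreg>D/(1-\expan^{\distexp})$.
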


\begin{proof}
  Since the sets \(I, \tilde I\) are regular sets, by the
  positively-linked assumption (namely \(\delta\)-density), they each
  contain an element of \(\cQ_{N_{\delta}}\), namely \(Q, \tilde
  Q\). Moreover, there exists \(N\) with \(M\leq N \leq N_{\delta}\)
  such that the \((\modreg, \epstail)\)--standard families $\cG_{N}$ and $\tilde \cG_N$ contain
  \((\modreg, \epstail)\)--standard pairs $(I_1, \rho_1)$ and $(I_2, \rho_2)$ with associated
  weights $w_1, w_2$ whose intersection contains a \(C_{X}\)-good overlap set
  \(\ovlregion\), with \(\leb(\ovlregion)\geq \Delta_\delta >
  0\). Here we have also used the fact that the artificial chopping of
  \Cref{iteration} is done avoiding the overlap \(\ovlregion\).
  
  Let us write \(\Delta=\Delta_{\delta}\). We assume without loss of
  generality that \(w_{2} =\min\{w_{1},w_{2}\}\). Apply
  \Cref{split_const} to replace the \((\modreg, \epstail)\)--standard pairs $(I_1, \rho_1)$,
  $(I_2, \rho_2)$ by \hfill $\{(I_1,\bar \rho_1),\\ (I_1,\mathring \rho_1)\}$,
  $\{(I_2,\bar \rho_2), (I_2,\mathring \rho_2)\}$ with associated
  weights $\{\bar w_1, \mathring w_1\}$,
  $\{\bar w_2, \mathring w_2\}$. 

  Now consider just $(I_1,\bar \rho_1)$ and $(I_2,\bar \rho_2)$. These
  are constant \((\modreg, \epstail)\)--standard pairs and by definition (see
  \Cref{split_const}) they satisfy
  $\bar w_1 \bar \rho_1 = \bar w_2 \bar \rho_2$. Now apply
  \Cref{extract_overlap} to further replace these \((\modreg, \epstail)\)--standard pairs by
  $\{(A_j, 1/\leb(A_j))\}_{j=0}^1$,
  $\{(\tilde A_j, 1/\leb(\tilde A_j))\}_{j=0}^1$ with associated
  weights $\{\bar \rho_1 \bar w_1\leb(A_j)\}_{j=0}^1$,
  $\{\bar \rho_2 \bar w_2 \leb(\tilde A_j)\}_{j=0}^1$. Note that
  \(A_{0}=\tilde A_{0}\) and
  \(\bar \rho_1 \bar w_1= \bar \rho_2 \bar w_2 =cw_{2}\), so the
  elements corresponding to $j=0$ are exactly the same in both
  families.
  
  At this point we have replaced \(\cT^{N}\cG\) by the \((2\modreg,
  \epstail)\)--standard family
  \[(\cT^{N}\cG\setminus \{(I_1, \rho_1)\}) \cup \{(I_1, \mathring
    \rho_1)\}\cup \{(A_j, 1/\leb(A_j))\}_{j=0}^1,\] where
  \( w_1\rho_1 = \mathring w_1 \mathring\rho_1+\sum_{j=0}^1(\bar
  \rho_1 \bar w_1\leb(A_j))1/\leb(A_j) \Id_{A_j} \).

  To complete the modification of our \((\modreg, \epstail)\)--standard families and obtain
  \((2\modreg, \epstail)\)--standard families \(\cG_N^*, \tilde\cG_N^*\), we remove the common element
  \((A_0, 1/\leb(A_0))\) from both collections.

  The weight of the removed element is
  \(\bar \rho_1 \bar w_1\leb(A_0) = c\min\{w_{1},w_2\} \leb(A_0)\),
  which by definition of \(c\) (from \Cref{split_const}) and
  \(\leb(A_{0})=\leb(\ovlregion)\geq \Delta\), is bounded by
  \(\geq \coupsize \Delta w_{2}\).  Recall that $w_2$ is the weight of
  $(I_2,\rho_2)$, which is an \((\modreg, \epstail)\)--standard pair in $\cT^{N}\tilde \cG$. Hence
  for some $h_2 \in \cH^N$, denoting \(w_{\cG}=\abs{\cG}\), we have

  \begin{equation}
    \begin{split}
      w_2 &= w_{\tilde \cG} \int_{I_2} \tilde \rho \circ h_2 Jh_{2}
      \geq w_{\tilde \cG} \int_{\ovlregion} \tilde \rho \circ h_2 Jh_{2}
      \geq
      w_{\cG} \leb(\ovlregion)   \inf_{T^{N}(\tilde Q)} Jh_{2}  \inf_{\tilde I} \tilde \rho    \\
      &\geq w_{\tilde \cG} \Delta \Gamma_{N} e^{-\modreg
        \epstail^\distexp}\leb(\tilde I)^{-1}\int_{\tilde I} \tilde
      \rho.
    \end{split}
  \end{equation}

  Since \(\int_{\tilde I} \tilde \rho = 1\), we have
  \(w_2\geq \infw w_{\tilde \cG} \).  Therefore, the weight of the
  removed element \((A_0, 1/\leb(A_0))\) is bounded by
  \[\geq \singleweightremoved \min\{\abs{\cG},\abs{\tilde \cG}\}. \]
  Setting \(\gamma=\singleweightremoved\),
  \[
    \abs{\cG_{N_\delta}^*} \leq |\cG| -\min\{\abs{\cG}, \abs{\tilde
      \cG}\} \gamma.
  \]
  One also gets a similar estimate for
  \( \abs{\tilde\cG_{N_{\delta}}^{*}}\).
\end{proof}

Now we remove the restriction that \(\cG\) and \(\tilde \cG\) are
\textit{singleton} \((\modreg,
  \epstail)\)--standard families.
\begin{lem} \label{family_decay} Suppose $\cG$ and $\tilde \cG$ are
  \((\modreg,
  \epstail)\)--standard families and each satisfy
  $\abs{\partial_\ve \cG} \leq \proper\abs{\cG}\ve$ for $\ve < \epstail$.
  There exist \((2\modreg,
  \epstail)\)--standard families $\cG_{N}^*$, $\tilde \cG_{N}^*$
  such that
  \begin{equation}
    \begin{split}
      \rho_{\cG_{N}^*}&- \rho_{\tilde \cG_{N}^*}
      =\rho_{\cT^{N}\cG}-\rho_{\cT^{N}\tilde\cG} \text{; and, }\\
      \abs{\cG_{N}^*} &\leq |\cG| -\min\{\abs{\cG}, \abs{\tilde
        \cG}\} \gamma_{1},\\
      \abs{\tilde \cG_{N}^*}&\leq |\tilde \cG| -\min\{\abs{\cG},
      \abs{\tilde \cG}\} \gamma_{1},
    \end{split}
  \end{equation}
  where \(\gamma_{1}=(2/3)\gamma\).
\end{lem}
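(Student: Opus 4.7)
The plan is to reduce the claim to many applications of \Cref{singleton_decay} applied to singletons whose domains are $\delta$-regular. The critical enabling fact is that the choice $\delta=\delta_0=1/(3\proper)$ together with properness forces most of the weight of each family to sit on $\delta$-regular pairs: any non-$\delta$-regular pair $(I_j,\rho_j)$ satisfies $\leb(I_j\setminus\partial_\delta I_j)=0$ and hence $\int_{\partial_\delta I_j}\rho_j=1$, so the total weight of non-regular pairs is controlled by $\abs{\partial_\delta\cG}\le \proper\delta\abs{\cG}=\abs{\cG}/3$. Writing $\cG_{reg},\cG_{irr}$ for the $\delta$-regular and non-$\delta$-regular sub-families (and analogously for $\tilde\cG$), one obtains $\abs{\cG_{reg}}\ge(2/3)\abs{\cG}$ and $\abs{\tilde\cG_{reg}}\ge(2/3)\abs{\tilde\cG}$.

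The next step is to couple $\cG_{reg}$ to $\tilde\cG_{reg}$ via pairwise applications of \Cref{singleton_decay}. Assume WLOG that $a:=\abs{\cG_{reg}}\le\abs{\tilde\cG_{reg}}=:b$. For each $(j,k)\in\cJ_{reg}\times\tilde\cJ_{reg}$, I would split off sub-singletons from $(\{(I_j,\rho_j)\},w_j)$ and $(\{(\tilde I_k,\tilde\rho_k)\},\tilde w_k)$ of common weight $u_{j,k}:=w_j\tilde w_k/b$. The total allocations $\sum_k u_{j,k}=w_j$ and $\sum_j u_{j,k}=(a/b)\tilde w_k\le\tilde w_k$ are consistent, with a ``leftover'' weight $r_k=\tilde w_k(1-a/b)$ remaining on the $\tilde\cG$-side when $a<b$. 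Applying \Cref{singleton_decay} to each such pair of equally weighted, $\delta$-regular singletons couples off $\gamma u_{j,k}$ weight from each side, outputting $(2\modreg,\epstail)$-standard families whose density difference agrees with that of the $\cT^{N_\delta}$-iterates of the sub-singletons.

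Assembling $\cG_N^*$ from the post-coupling $\cG$-side outputs together with $\cT^{N_\delta}\cG_{irr}$, and $\tilde\cG_N^*$ analogously but with the $\cT^{N_\delta}$-iterates of the leftover sub-singletons $(\{(\tilde I_k,\tilde\rho_k)\},r_k)$ adjoined, the desired identity $\rho_{\cG_N^*}-\rho_{\tilde\cG_N^*}=\rho_{\cT^{N_\delta}\cG}-\rho_{\cT^{N_\delta}\tilde\cG}$ follows by telescoping: the leftover sub-singletons precisely reconstruct the $(1-a/b)$-fraction of $\rho_{\cT^{N_\delta}\tilde\cG_{reg}}$ that is missing from the sum over paired sub-singletons, and $\cT^{N_\delta}\cG_{irr}$, $\cT^{N_\delta}\tilde\cG_{irr}$ cancel trivially on their respective sides. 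The total mass coupled off each side is $\gamma a\ge(2/3)\gamma\min\{\abs{\cG},\abs{\tilde\cG}\}=\gamma_1\min\{\abs{\cG},\abs{\tilde\cG}\}$, matching the stated bound.

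The principal bookkeeping challenge is legitimizing the splitting of a singleton of weight $w$ into several sub-singletons whose weights sum to $w$; this is harmless because two standard families are equivalent whenever they induce the same absolutely continuous measure, so any such index-set refinement with appropriately reassigned weights yields an equivalent family. Once this is set up, the rest is a direct invocation of \Cref{singleton_decay} and a routine accounting of weights on both sides.
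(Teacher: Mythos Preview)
Your proposal is correct and follows essentially the same approach as the paper: isolate the $\delta$-regular sub-families using $\delta=1/(3\proper)$ so that they carry at least $2/3$ of the total weight, perform a bilinear weight-splitting to reduce to countably many matched singleton pairs, and apply \Cref{singleton_decay} to each pair. The only cosmetic difference is that you normalize the paired singletons to carry equal weight $w_j\tilde w_k/b$ with an explicit leftover on the larger side, whereas the paper assigns unequal weights $v\tilde v/\abs{\tilde\cG_L}$ and $v\tilde v/\abs{\cG_L}$ and takes the minimum inside each application of \Cref{singleton_decay}; both yield the same coupled mass $\gamma\min\{\abs{\cG_{reg}},\abs{\tilde\cG_{reg}}\}\ge\gamma_1\min\{\abs{\cG},\abs{\tilde\cG}\}$.
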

\begin{proof}
  Recall that we chose $\delta=\delta_{0}=1/(3 \proper)$ so that
  $\abs{\partial_{\delta}\cG} \leq \proper \abs{\cG} \delta < (1/3)
  \abs{\cG}$.  Let \(\cG_L \subset \cG\) be the collection of \((\modreg,
  \epstail)\)--standard
  pairs \((I, \rho) \in \cG\) such that
  \(\int_{I\setminus \partial_{\delta}I} \rho > 0\). Note that for
  such standard pairs, \(I\) is necessarily a \(\delta\)-regular set. We have
  $\abs{\cG_L} \geq (2/3) \abs{\cG}$. Let $\cG_S=\cG\setminus \cG_L$.

  Let $\cT^{N}\cG_L$ be an \(N\)-th iterate of $\cG_L$. Note that
  $\cT^{N}\cG_L = \cup_{(I,\rho) \in \cG} \cT^{N}\cG_\rho$, where $\cG_\rho$
  is a singleton \((\modreg,
  \epstail)\)--standard family containing only $(I, \rho)$.
  Thinking of $\cG_L$ (and similarly \(\tilde\cG_{L}\)) as a union of
  singleton \((\modreg,
  \epstail)\)--standard families we can apply
  \Cref{singleton_decay}. However, an intermediate technical step is
  necessary to properly justify the application of
  \Cref{singleton_decay}. In the following paragraph we describe this
  intermediate step.

  Suppose \((I, \rho)\) is an element of \(\cG_{L}\) and it has
  associated weight \(v\).  We replace this element by countably many
  elements which are the same except that their weights are given by
  \(\{v\tilde v/\abs{\tilde \cG_{L}}\}_{\tilde v \in \tilde
    \cG_{L}}\). Here we have slightly abused notation and labeled these
  \((\modreg,
  \epstail)\)--standard pairs by their weights. Similarly, we replace every element
  in \(\tilde \cG\) of weight \(\tilde v\) by elements of weight
  \(\{v\tilde v/\abs{\cG_{L}}\}_{ v \in \cG_{L}}\). For every
  \(v\tilde v/\abs{\tilde \cG_{L}} \in \cG_{L} \), there exists a
  \textit{matching} element
  \(v\tilde v/\abs{\cG_{L}} \in \tilde \cG_{L} \). We apply
  \Cref{singleton_decay} to these two elements. As a result, the
  weight \(v\tilde v/\abs{\tilde \cG_{L}}\) is reduced by
  \(\min\{v\tilde v/\abs{\tilde \cG_{L}}, v\tilde
  v/\abs{\cG_{L}}\}\gamma\). Do this for all elements
  \(v\tilde v/\abs{\tilde \cG_{L}} \in \cG_{L}\). Then the total
  weight \(\abs{\cG_{L}}\) is reduced by
  \[ \sum_{v}\sum_{\tilde v} \min\{v\tilde v/\abs{\tilde \cG_{L}},
    v\tilde v/\abs{\cG_{L}}\}\gamma = \sum_{v}\sum_{\tilde v} v\tilde
    v \min\{1/\abs{\tilde \cG_{L}}, 1/\abs{\cG_{L}}\}\gamma.
  \]
  Observe that this is just
  \(\min\{\abs{\cG_{L}}, \abs{\tilde \cG_{L}}\}\gamma\). Note that we
  have just described a matching of weights and nothing else. This
  intermediate step does not affect any other characteristics of our
  standard families.

  With the above considerations, we obtain a modified \((2\modreg,
  \epstail)\)--standard family \((\cG_L)_{N}^*\) such that
  \(\abs{(\cG_L)_{N}^*} \leq \abs{\cG_L} -
  \min\{\abs{\cG_L},\abs{\tilde \cG_L}\}\gamma\). Similarly,
  \(\abs{(\tilde \cG_L)_{N}^{*}} \leq \abs{\tilde \cG_L} -
  \min\{\abs{\cG_L},\abs{\tilde \cG_L}\}\gamma\). Let \(\cG_{N}^{*}\)
  denote the \((2\modreg,\epstail)\)--standard family consisting of
  elements of \((\cG_L)_{N}^*\) and \(\cT^{N}\cG_{S}\).  Since the standard
  pairs in \(\cG_{S}\) are not modified, we have
  $ \abs{\cT^{N}(\cG_S)} = \abs{\cG_S}$. Since
  $\abs{\cG_{N}^*} = \abs{\cT^{N}\cG_S} + \abs{(\cG_L)_{N}^*}$, we
  have
  \[
    \begin{split}
      \abs{\cG_{N}^*} &\leq \abs{\cG_S} +\abs{\cG_L} -
      \min\{\abs{\cG_L},\abs{\tilde
        \cG_L}\}\gamma \\
      &\leq \abs{\cG} - (2/3)\min\{\abs{\cG},\abs{\tilde \cG}\}\gamma.
    \end{split}
  \]
  Similar estimate is obtained for \(\abs{\tilde \cG_{N}^*} \).
  
\end{proof}

\begin{rem} [Recovery of regularity]
  $\cG_N^{*}$ and $\tilde \cG_N^{*}$ are \((2\modreg,
  \epstail)\)--standard
  families because by \Cref{split_const}, an element $(I_N,\rho_N)$ in
  one of these families only satisfies $H(\rho_N) \leq 2\modreg$.  Let
  \[n_1 = \lceil(\distexp
    \ln(\expan))^{-1}\ln(1/2-\dist/(2\modreg))\rceil.\] Then applying
  \Cref{invariance} we get
  $H(\rho_{N+n_{1}}) \leq 2\modreg (\expan^{\distexp n_1} +
  (2\modreg)^{-1}\dist) < \modreg$, for
  \((I_{N+n_{1}},\rho_{N+n_{1}})\) an element of the \(n_{1}\)-th
  iterate of \((I_{N}, \rho_{N})\). Therefore,
  $\cT^{n_{1}}\cG_{N}^{*}, \cT^{n_{1}}\tilde \cG_{N}^{*}$ are
  \((\modreg, \epstail)\)--standard families.
\end{rem}

\begin{rem}[Recovery of boundary]
  We also have to worry about the boundary of the standard family
  after modification. Recall that during modification, we first split
  a standard pair into two, a constant one and the remainder, then we
  further split the constant one into at most two pieces. The latter
  modification also modifies the boundary. However, since the
  splitting is done on a \(C_{X}\)-\textit{good overlap set}, by a crude
  estimate the splitting increases the total boundary of the family by
  a factor of \((1+C_{X})\).

  Hence $\abs{\partial_\ve \cG_{N}^*} \leq (1+C_{X}) \abs{\partial_\ve T^{N}\cG}$
  and since \(N \geq M\), this is bounded by
  \(\leq (1+C_{X}) \proper\abs{\cG}\ve\) for every \(\ve < \epstail\). In
  order to recover from this, we iterate \(\cG_{N}^{*}\) in multiples
  of \(\ntail\) and use \eqref{eq:pureiterate}.  Indeed,
  \[
    \abs{\partial_\ve \cT^{k\ntail}\cG_{N}^*}  \leq
    \proper\abs{\cG_{N}^{*}}\ve((1+C_{X})\vartheta_1^{k}+\zeta_2/\proper) \leq
    \proper\abs{\cG}\ve((1+C_{X})\vartheta_1^{k}+\zeta_2/\proper).
  \]
  To finish the estimate recall our choice of $\proper$ and note that
  we just need to choose $k = k_{0}\in \bN$ such that
  $(1+C_{X})\vartheta_1^{k_{0}} + \zeta_2/\proper <1$. Let
  \(n_{2}=k_{0}\ntail\) and \(\bar n = N_\delta + \max\{n_1,n_2\}\).
\end{rem}

As a corollary of the above remarks we get the following
\textit{recovered} version of \Cref{family_decay}, which can be
iterated.
\begin{prop} \label{recovered} Suppose $\cG$ and $\tilde \cG$ are
  \((\modreg,
  \epstail, \proper)\)--proper standard families.  There exist \((\modreg,
  \epstail, \proper)\)--proper standard families
  $\cG_{\bar n}^*$, $\tilde \cG_{\bar n}^*$ such that \begin{equation}
    \begin{split}
      \rho_{\cG_{\bar n}^*}&- \rho_{\tilde \cG_{\bar n}^*}
      =\rho_{\cT^{\bar n}\cG}-\rho_{\cT^{\bar n}\tilde\cG} \text{; and, }\\
      \abs{\cG_{\bar n}^*} &\leq |\cG| -\min\{\abs{\cG}, \abs{\tilde
        \cG}\} \gamma_{1},\\
      \abs{\tilde \cG_{\bar n}^*}&\leq |\tilde \cG| -\min\{\abs{\cG},
      \abs{\tilde \cG}\} \gamma_{1}.
    \end{split}
  \end{equation}
\end{prop}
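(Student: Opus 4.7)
The plan is to derive this directly from \Cref{family_decay} by iterating the output families for the extra $\bar n - N$ steps needed to undo the two defects introduced by coupling: the loss of log-H\"older regularity (from $\modreg$ to $2\modreg$) and the increase of the boundary constant (by a factor $1+C_{\uspace}$). The recovery times $n_1$ for regularity and $n_2 = k_0 \ntail$ for properness are exactly those fixed at the end of \Cref{sec:setting}, and $\bar n = N_\delta + \max\{n_1, n_2\}$ is chosen precisely so that both recoveries can be performed simultaneously.

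First I would apply \Cref{family_decay} to $\cG$ and $\tilde\cG$, obtaining $(2\modreg,\epstail)$--standard families $\cG_N^*, \tilde\cG_N^*$ (for some $N\in [M, N_\delta]$) satisfying $\rho_{\cG_N^*}-\rho_{\tilde\cG_N^*}=\rho_{\cT^N\cG}-\rho_{\cT^N\tilde\cG}$ and the required weight reduction by $\gamma_1\min\{|\cG|,|\tilde\cG|\}$. I then define
\[
  \cG_{\bar n}^* := \cT^{\bar n - N}\cG_N^*, \qquad \tilde\cG_{\bar n}^* := \cT^{\bar n - N}\tilde\cG_N^*,
\]
and note $\bar n - N \geq \max\{n_1, n_2\}$ since $N\leq N_\delta$. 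The density identity is preserved under $\cT^{\bar n - N}$ via \eqref{eq:connection}, which translates to $\sL^{\bar n - N}(\rho_{\cG_N^*}-\rho_{\tilde\cG_N^*})=\rho_{\cG_{\bar n}^*}-\rho_{\tilde\cG_{\bar n}^*}$, giving the desired identity at time $\bar n$. The total weights are preserved under iteration, so the weight reduction carries over verbatim.

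To verify that $\cG_{\bar n}^*, \tilde\cG_{\bar n}^*$ are $(\modreg,\epstail)$--standard families (the regularity half of properness), I invoke \Cref{invariance}: for any element of $\cT^{n_1}\cG_N^*$ we have $H(\rho_{n_1+N}) \leq 2\modreg(\expan^{\distexp n_1}+(2\modreg)^{-1}\dist) < \modreg$ by the defining property of $n_1$, and further iterations only decrease this bound. To verify the boundary estimate, I first bound $|\partial_\ve \cG_N^*| \leq (1+C_\uspace)|\partial_\ve \cT^N\cG| \leq (1+C_\uspace)\proper|\cG|\ve$ using that the extraction of the overlap in \Cref{extract_overlap} is performed on a $C_\uspace$-good overlap set (cf.\ \eqref{eq:goodovl}). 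Then inserting the constant $(1+C_\uspace)$ in place of the implicit $1$ in \eqref{eq:pureiterate} yields, for $k=k_0$,
\[
  |\partial_\ve \cT^{n_2}\cG_N^*| \leq \proper|\cG|\ve\bigl((1+C_\uspace)\vartheta_1^{k_0}+\zeta_2/\proper\bigr) \leq \proper|\cG|\ve
\]
by the defining property of $k_0$. Iterating further (up to $\bar n - N$ steps) only improves this bound by \eqref{eq:bd_inv}, so $\cG_{\bar n}^*$ is $(\modreg,\epstail,\proper)$--proper; the same argument applies to $\tilde\cG_{\bar n}^*$.

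The only mildly delicate point is the simultaneous recovery: the two defects must be undone in parallel, and the growth lemma must be applied to the \emph{already modified} family $\cG_N^*$, which forces the $(1+C_\uspace)$ factor to propagate through \Cref{p_growth}. This is essentially the content of the two remarks preceding the proposition and requires nothing beyond the already-established \Cref{iteratedgrowthlemma} and \Cref{invariance}; the choice of constants $n_1, n_2, \bar n$ in \Cref{sec:setting} was engineered exactly so that the inequality closes.
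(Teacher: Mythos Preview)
Your proof is correct and follows essentially the same approach as the paper: the proposition is stated there as an immediate corollary of \Cref{family_decay} together with the two ``recovery'' remarks preceding it, and you have reproduced exactly that argument—recovering regularity via \Cref{invariance} after $n_1$ steps and recovering the boundary constant via \eqref{eq:pureiterate} after $n_2=k_0\ntail$ steps, with the $(1+C_\uspace)$ factor coming from the good-overlap cut. Your write-up is in fact slightly more explicit than the paper's, since you spell out the definition $\cG_{\bar n}^* := \cT^{\bar n - N}\cG_N^*$ and check the density identity via \eqref{eq:connection}, whereas the paper leaves these implicit.
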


\begin{rem}
Note that if \(\abs{\cG}=\abs{\tilde\cG}\) then the right-hand side of
the above inequalities is \((1-\gamma_{1})\abs{\cG}\)
\end{rem}
We are ready to prove our main theorem.

\begin{proof}[Proof of \Cref{mainthm}] Let \(\cG_{2\bar n}^{**}\)
  denote the modification of \(\cT^{\bar n}\cG_{\bar n}^{*}\), where
  \(\cG_{\bar n}^{*}\)
  is in turn the
  modification of \(\cT^{\bar n}\cG\). Using \Cref{recovered}
  repeatedly, and noting that the weight of the families remain equal
  before and after modification (i.e. if \(\abs{\cG}=\abs{\tilde
    \cG}\), then \(\abs{\cG^{*}}=\abs{\tilde \cG^{*}}\)), we get
  \[\begin{split}
      \norm{\sL^{2\bar n} \rho - \sL^{2\bar n} \tilde \rho}_{\L^1}
      &\leq \abs{\cG_{2\bar n}^{**}} + \abs{\tilde \cG_{2\bar n}^{**}}
      \leq (1-\gamma_1) \abs{\cG_{\bar n}^*} + (1-\gamma_1)
      \abs{\tilde \cG_{\bar
          n}^*} \\
      &\leq (1-\gamma_1)^{2} \abs{\cG} + (1-\gamma_1)^{2}
      \abs{\tilde \cG}\\
      &\leq 2(1-\gamma_{1})^{2}.
    \end{split}
  \]
  For a general $m \in \bN$, write $m = k\bar n + r$, where
  $0\leq r<\bar n$. Using
  \(\norm{\sL^{r}\rho}_{\L^{1}} \leq \norm{\rho}_{\L^{1}}\), we obtain
  \(\norm{\sL^m \rho - \sL^m \tilde \rho}_{\L^{1}} \leq
  \norm{\sL^{k\bar n} \rho - \sL^{k\bar n} \tilde \rho} \).  This is
  bounded by
  \(\leq 2(1-\gamma_{1})^{k} \leq 2(1-\gamma_{1})^{((m/\bar n)-1)}=
  2(1-\gamma_1)^{-1} \left((1-\gamma_1)^{1/\bar n}\right)^{m} \)
\end{proof}

A simple consequence of \Cref{recovered} is that for every \((\modreg,
  \epstail, \proper)\)--proper
 standard pair $(I,\rho)$, the sequence $\{\sL^m \rho\}_m$ is a
Cauchy sequence in $\L^1(\uspace,\sB, \leb)$ hence it has a limit
$\wp \in \L^{1}$. Moreover, this limit does not depend on the choice
of the starting standard pair $\rho$. Indeed, for \(n>m\),
\(\norm{\sL^{m}\rho-\sL^{n}\rho}_{\L^{1}} =
\norm{\rho_{\cT^{m}\cG}-\rho_{\cT^{m}\tilde \cG}}_{\L^{1}}\), where
\(\cG = \{(I, \rho)\}\) and \(\tilde \cG =\cT^{n-m}\cG\), which are
\((\modreg,
  \epstail, \proper)\)--proper probability standard families. Applying \Cref{recovered}
repeatedly (as in the proof of \Cref{mainthm}), shows that
\(\norm{\rho_{\cT^{m}\cG}-\rho_{\cT^{m}\tilde \cG}}_{\L^{1}}\) can be made
arbitrarily small if \(n\) and \(m\) are sufficiently large and the
result follows.

\begin{rem}
The notion of being an \((\modreg,
  \epstail, \proper)\)--proper standard family is a notion of
  regularity. Let us briefly comment on its relation to the notion of
  H\"older regularity. More precisely we will show that certain
  H\"older functions can be
represented as \((\modreg,
  \epstail, \proper)\)--proper standard families. Therefore, two such functions
converge exponentially to one another under iteration.

  We say that \(V \subset \uspace\) is
  \emph{\((\modreg,\epstail, \proper)\)-nice} if there exists a (mod
  \(0\))-partition \(\{V_{\ell}\}\) of \(V\) into countably many open
  sets such that \(\diam V_{\ell} \leq \epstail\), \(\forall \ell\),
  and 
    \(\leb(\partial_{\ve}V_{\ell}) \leq \Caa \proper\ve
    \leb(V_{\ell})\), \(\forall \ve < \epstail\).

Suppose \(f \in \sC^{\alpha}(\uspace, \bR)\) is a bounded, H\"older
continuous function supported on a
\emph{\((\modreg,\epstail, \proper)\)-nice} set \(V \subset \uspace\)
with \(\leb(V)< \infty\) and \(\modreg>0\).  Choosing
\(c=\abs{f}_{\alpha}/\modreg +\sup \abs{f}\), we can write
\(f=f+c-c\), where \(H(f+c) \leq \modreg\). It is easy to see that
\(f+c\) can be written as an \((\modreg,
  \epstail, \proper)\)--proper standard family. Indeed, the normalized restrictions of
\(f+c\) to sets \(V_{\ell}\) form an \((\modreg,
  \epstail, \proper)\)--standard family \(\cG\) and
\(\abs{\partial_{\ve}\cG} \leq e^{\modreg\epstail^{\distexp}}
\sum_{\ell}w_{\ell}\frac{\leb(\partial_{\ve}V_{\ell})}{\leb(
  V_{\ell})} \leq \proper\ve \abs{\cG}\). 

Suppose \(f, g \in \sC^{\alpha}(X, \bR)\) are bounded, H\"older
continuous functions supported on
\((\modreg, \epstail, \proper)\)-\emph{nice} sets \(V_{f}, V_{g}\)
such that \(\leb(V_{f})=\leb(V_{g})<\infty\) and
\(\int_{V_{f}}f = \int_{V_{g}}g\). Write \(f=f+c-c\) and
\(g = g+c-c\), where
\(c=\max{\{\abs{f}_{\alpha}, \abs{g}_{\alpha}\}}/\modreg+
\max{\{\sup\abs{f}, \sup\abs{g}\}}\). Suppose our dynamical system
satisfies conditions \ref{hyp1}-\ref{hyp5} with parameters
\(\modreg, \epstail, \proper\). Then, applying \Cref{recovered},

\[
  \begin{split}
    \norm{\sL^{m}f - \sL^{m}g}_{\L^{1}} &= \norm{\sL^{m}(f+c) - \sL^{m}(g+c)}_{\L^{1}} \\
    &\leq \norm{\rho_{\cT^{m}\cG}-\rho_{\cT^{m}\tilde \cG}}_{\L^{1}} \leq
    2C_{\gamma_{1}}\gamma_{2}^{m}\abs{\cG},
  \end{split}
\]
where
\(\abs{\cG} = \int_{V_{f}}(f+c) \leq \frac{2\leb(V_{f})}{\modreg}
\max\{\norm{f}_{\sC^{\alpha}}, \norm{g}_{\sC^{\alpha}}\}\).
\end{rem}

\section{Inducing schemes}
\label{sec:inducing}
Throughout this section we assume conditions \ref{hyp1}-\ref{hyp4} of
\Cref{sec:setting} hold. Condition \ref{hyp5} is not 
needed, but it is replaced by an additional assumption, namely
\ref{hyp6}, \ref{hyp7} or \ref{hyp8} below. All of these assumptions
ask for the existence of a (mod \(0\))-partition of the space \(\uspace\) which is
then used to build a suitable inducing scheme.

The content of this section is independent of
\Cref{sec:coupling}.

\subsection{Inducing scheme 1}
\label{sec:inducingscheme1}
In this subsection we assume the following.

\vspace{0.3 cm}
\begin{hyp}[Partition \(\cR\)]\label{hyp6}

  There exist a finite (mod \(0\))-partition
  \(\cR=\{R_{j}\}_{j=1}^{N}\) of \(\uspace\) into open sets (recall
  \Cref{def:mod0}) such that
  \begin{enumerate}[label=(\arabic*)]
  \item \label{Ritemone} for every \(1 \le j \le N\),
    \(\sup_{\ve>0} \ve^{-1}\leb(\partial_{\ve}R_{j}) <\infty\),
  \item \label{Ritemtwothree}
    \(\exists c_{\cR} \in (0,1), C_{\cR}>0\), possibly depending on
    \(\delta_{0}\), s.t. for every \(\delta_{0}\)-regular set \(I\),
    \(\diam I \le \epstail\), there exists \(R=R(I) \in \cR\) s.t.
    \(I \supset R\) and
    \begin{eqnarray}
      \label{eq:621}  \text{ if }
      \leb(I \setminus R) \neq 0, \text{ then } \leb(I \setminus R)&\ge& c_{\cR}\leb(I);\\
      \label{eq:622}\leb(\partial_{\ve}(I \setminus \cl R)\setminus \partial_{\ve}I)
                                                                   &\le& C_{\cR}\leb(\partial_{\ve} I).
    \end{eqnarray}
  \end{enumerate}
\end{hyp}

\begin{rem}
  \Cref{Ritemone} implies that \(\leb(\partial R)=0\),
  \(\forall R \in \cR\). It follows that
  \(\leb(I \setminus \cl R)=\leb(I \setminus R)\).
\end{rem}

Under assumptions \ref{hyp1}-\ref{hyp4}, \ref{hyp6} we construct an
inducing scheme where the base map is a Gibbs-Markov map with finitely
many images and the return times have exponential tails.
  
  \begin{prop}\label{prop-GM} There exists a refinement \(\cP'\) of the
    partition \(\cP=\{O_{h}\}\) for \(T:\uspace \circlearrowleft\)
    into open sets (mod \(0\)) and a map \(\tau:\uspace\to\bZ^+\)
    constant on elements of \(\cP'\) such that
    \begin{enumerate}[label=(\alph*)]
    \item The map \(G=T^\tau:\uspace \circlearrowleft\) is a
      Gibbs-Markov map with finitely many images
      \(\{Z_{1}, Z_{2}, \dots, Z_{q}\} \subset \cR\).
    \item \(\leb(\tau>n)\le const \cdot \kappa^n\) for some
      \(\kappa \in (0,1)\).
    \end{enumerate}
  \end{prop}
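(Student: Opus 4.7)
The plan is to build the scheme by a recursive carve-and-iterate procedure on standard families together with a stopping-time definition of $\tau$. I start with the standard family
\[
\cG_0 = \{(R_i, 1/\leb(R_i))\}_{i=1}^N, \qquad w_{\cG_0}(i) = \leb(R_i),
\]
which is $(\modreg,\epstail,B)$-proper with some $B<\infty$ by \ref{Ritemone}; one invokes \Cref{rem:Brecov} to recover $(\modreg,\epstail,\proper)$-properness after a fixed preliminary number of iterations. Inductively, at each step $n\ge 1$, I iterate the yet-unreturned family one step via $\cT^{1}$. For each resulting standard pair $(I,\rho)$ whose domain $I$ is $\delta_{0}$-regular, \ref{Ritemtwothree} produces an inscribed $R=R(I)\in\cR$. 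I split the pair into a piece on $R$ and a piece on $I\setminus\cl R$, and declare $\tau:=n$ on the unique open cell $A\subset O_{h}$, $h\in\cH^{n}$, with $T^{n}A=R$. The collection of these cells (over all $n$ and $h$) forms $\cP'$, and $G=T^{\tau}$ maps each cell $A$ homeomorphically onto its associated $R\in\cR$. Because $\cR$ is finite and each $R$ is (mod $0$) a union of such cells, $G$ is Markov with finitely many images $\{Z_{1},\dots,Z_{q}\}\subset\cR$.

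The Gibbs (bounded-distortion) property is immediate from \ref{hyp2} applied to the inverse branches $h\in\cH^{\tau}$ of $G$: the estimate \eqref{eq:dist} holds uniformly in $\tau$ with the constant $\dist$ in place of $\tilde\dist$, which is exactly log-H\"older control of $JG$ on each cell of $\cP'$.

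The exponential-tail estimate rests on the uniformly positive fraction of weight frozen at every step. Let $(I,\rho)$ be an $(\modreg,\epstail)$-standard pair of weight $w$ with $\delta_{0}$-regular $I$; by \Cref{Fed} and $\int_{I}\rho=1$, $\inf_{I}\rho\ge e^{-\modreg\epstail^{\distexp}}/\leb(I)$, so
\[
w\int_{R}\rho \;\ge\; w\cdot e^{-\modreg\epstail^{\distexp}}\frac{\leb(R)}{\leb(I)} \;\ge\; c_{\star}w,\qquad c_{\star} := e^{-\modreg\epstail^{\distexp}}\frac{\min_{i}\leb(R_{i})}{\Cball(\epstail)}>0,
\]
where I used $\leb(R)\ge\min_{i}\leb(R_{i})>0$ and $\leb(I)\le\Cball(\epstail)$. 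By properness, $\abs{\partial_{\delta_{0}}\cG_{n}}\le\proper\delta_{0}\abs{\cG_{n}}=\abs{\cG_{n}}/3$, so at least $(2/3)\abs{\cG_{n}}$ of the weight sits on $\delta_{0}$-regular pairs. Hence each step freezes at least a $(2/3)c_{\star}$ fraction of the unreturned weight, yielding the geometric decay $\abs{\cG_{n}}\le(1-(2/3)c_{\star})^{n}\abs{\cG_{0}}$, which translates to $\leb\{\tau>n\}\le \mathrm{const}\cdot\kappa^{n}$.

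The main obstacle is preserving properness of the unreturned family under carving: slicing $(I,\rho)$ along $\partial R$ introduces new boundary not lying in $\partial I$. This is precisely what \eqref{eq:622} controls: together with \Cref{Fed}, it shows that the $\abs{\partial_{\ve}\cdot}$-norm of the $(I\setminus\cl R)$-piece is inflated by a factor at most $1+C_{\cR}$ relative to that of $(I,\rho)$. I therefore alternate one carving step with $n_{\mathrm{rec}}$ further $\cT^{1}$-iterations, during which properness is fully restored by \Cref{bd_invariance} and \Cref{rem:Brecov}; absorbing this recovery time into the exponent, the final rate becomes $\kappa=(1-(2/3)c_{\star})^{1/(1+n_{\mathrm{rec}})}\in(0,1)$, giving the claimed exponential tail.
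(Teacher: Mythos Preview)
Your approach is essentially the paper's: iterate to properness, carve out an element of $\cR$ from each $\delta_{0}$-regular pair, recover properness, repeat. The frozen-fraction estimate via $c_{\star}$ and the $2/3$ mass on regular pairs is exactly \Cref{lem-fixedratio}, and your Gibbs/Markov verification is fine.

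There is one genuine gap. You argue that \eqref{eq:622} bounds the inflation of the boundary norm under carving, i.e.\ $|\partial_{\ve}\hat\cG|\le (1+C_{\cR})|\partial_{\ve}\cG|$, and then invoke \Cref{rem:Brecov} with a fixed $n_{\mathrm{rec}}$. But \Cref{rem:Brecov} needs the remainder family $\hat\cG$ to be $(\modreg,\epstail,B')$--proper for a \emph{fixed} $B'$, which requires not only control of $|\partial_{\ve}\hat\cG|$ but also a uniform lower bound $|\hat\cG|\ge c\,|\cG|$. Your argument does not provide this: if many regular pairs satisfy $\leb(I\setminus R)\approx 0$, then $|\hat\cG|/|\cG|$ can be arbitrarily small, the properness constant of $\hat\cG$ blows up, and your recovery time $n_{\mathrm{rec}}$ is not uniform over the induction. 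This is precisely why \eqref{eq:621} is part of the hypothesis: it forces $\leb(I\setminus R)\ge c_{\cR}\leb(I)$ whenever the remainder is nonempty, which (via \Cref{Fed}) gives $|\hat\cG|\ge e^{-\modreg\epstail^{\distexp}}c_{\cR}|\cG|$ and hence a fixed $B'=\bar C_{\cR}\proper$; see \Cref{lem-remainderfam}. Once you add this step your scheme is complete and matches the paper's proof.
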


  Before we prove \Cref{prop-GM}, we need several lemmas.

  \begin{lem}[Remainder family \(\hat \cG\)] \label{lem-remainderfam}
    Suppose \(\cG\) is an \((\modreg, \epstail, \proper)\)--proper
    standard family.  Let \(\hat \cG\) be the family obtained from
    \(\cG\) by replacing each \((I, \rho)\) of weight \(w\), having a
    \(\delta_{0}\)-regular domain and containing an element
    \(R=R(I) \in \cR\) in its domain with
    \(\leb(I\setminus R)\neq 0\), by
    \((I\setminus \cl R, \rho \Id_{I \setminus \cl R}/\int_{I
      \setminus R} \rho)\) of weight \(w \int_{I \setminus R}
    \rho\). Then \(\hat \cG\) is an
    \((\modreg, \epstail, \bar C_{\cR}\proper )\)--proper standard
    family, where \(\bar C_{\cR} = (\Ca C_{\cR}+1) \Ca c_{\cR}^{-1}\).
  \end{lem}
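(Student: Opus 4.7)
The plan is to verify that each element of $\hat\cG$ is an $(\modreg,\epstail)$--standard pair, and then check the properness inequality $|\partial_\ve \hat\cG| \leq \bar C_{\cR}\proper |\hat\cG|\ve$ for all $\ve < \epstail$.

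For the first part, for every modified element the new domain $I \setminus \cl R$ satisfies $\diam(I \setminus \cl R) \leq \diam I \leq \epstail$, and the new density is a constant multiple of $\rho$, so $H(\cdot) \leq \modreg$ is preserved; normalization to total mass $1$ is immediate since $\leb(\partial R) = 0$ (guaranteed by \ref{Ritemone}) gives $\int_{I \setminus \cl R}\rho = \int_{I \setminus R}\rho$. Unmodified elements of $\cG$ remain $(\modreg,\epstail)$--standard pairs trivially.

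For properness, I would split each $\partial_\ve(I \setminus \cl R)$ into the pieces $\partial_\ve(I \setminus \cl R) \setminus \partial_\ve I$ and $\partial_\ve I \cap (I \setminus \cl R)$. The second piece contributes at most $w_j \int_{\partial_\ve I_j} \rho_j$ after multiplying by weight, summing up to $|\partial_\ve \cG|$. For the first piece, I would invoke the Comparability Lemma (applicable since $\diam I_j \leq \epstail$ and $H(\rho_j) \leq \modreg$) to compare averages on $\partial_\ve(I_j \setminus \cl R_j)\setminus \partial_\ve I_j$ and on $\partial_\ve I_j$, giving
\[
    \int_{\partial_\ve(I_j\setminus \cl R_j)\setminus \partial_\ve I_j}\rho_j
    \;\leq\;
    \Ca\, \frac{\leb(\partial_\ve(I_j\setminus \cl R_j)\setminus \partial_\ve I_j)}{\leb(\partial_\ve I_j)}\int_{\partial_\ve I_j}\rho_j
    \;\leq\;
    \Ca\, C_{\cR}\int_{\partial_\ve I_j}\rho_j,
\]
where the last inequality uses \eqref{eq:622}. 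Weighting and summing over $j$ yields $|\partial_\ve \hat\cG| \leq (\Ca C_{\cR} + 1)|\partial_\ve \cG| \leq (\Ca C_{\cR}+1)\proper |\cG|\ve$.

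It remains to replace $|\cG|$ by $|\hat\cG|$ on the right-hand side. Unmodified elements keep their weights, while for a modified element we need a lower bound on $\int_{I_j \setminus R_j}\rho_j$. Applying the Comparability Lemma once more gives
\[
    \int_{I_j \setminus R_j}\rho_j \;\geq\; \Caa \,\frac{\leb(I_j \setminus R_j)}{\leb(I_j)} \int_{I_j}\rho_j \;\geq\; \Caa c_{\cR},
\]
by \eqref{eq:621} and the fact that $\int_{I_j}\rho_j = 1$. Hence $\hat w_j \geq \Caa c_{\cR}\, w_j$ for modified pairs (and trivially for unmodified ones, since $\Caa c_{\cR} \leq 1$), so $|\cG| \leq \Ca c_{\cR}^{-1}|\hat\cG|$. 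Combining with the previous display gives $|\partial_\ve \hat\cG| \leq (\Ca C_{\cR}+1)\Ca c_{\cR}^{-1}\proper |\hat\cG|\ve = \bar C_{\cR}\proper |\hat\cG|\ve$, as required.

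The only delicate point is to set up the comparability step so that the ratio comes out in terms of $\leb(\partial_\ve I_j)$ (so that \eqref{eq:622} applies directly) rather than in terms of $\leb(I_j)$; applying the Comparability Lemma to averages over $\partial_\ve(I_j\setminus \cl R_j)\setminus \partial_\ve I_j$ and $\partial_\ve I_j$ (both subsets of $I_j$) is what makes this work cleanly.
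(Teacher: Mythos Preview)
Your proof is correct and follows essentially the same approach as the paper: the same splitting of $\partial_\ve(I_j\setminus \cl R)$, the same application of the Comparability Lemma to produce the ratio $\leb(\partial_\ve(I_j\setminus \cl R)\setminus\partial_\ve I_j)/\leb(\partial_\ve I_j)$ so that \eqref{eq:622} applies, and the same use of \eqref{eq:621} with comparability to bound $|\cG|$ by $\Ca c_{\cR}^{-1}|\hat\cG|$. You add a bit more detail on why the modified pairs remain $(\modreg,\epstail)$--standard, which the paper leaves implicit.
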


\begin{proof}
  This is a consequence of \cref{Ritemtwothree} of \ref{hyp6}. Indeed,
  assuming \(\cG=\{(I_{j}, \rho_{j})\}\) with associated weights
  \(w_{j}\), we have, \(\forall \ve < \epstail\),
  \[
    \begin{split}
      \abs{\partial_{\ve}\hat \cG} &\le \sum_{j} w_{j}
      \int_{\partial_{\ve}(I_{j}\setminus \cl R)}\rho_{j} \le \sum_{j}
      w_{j} \left( \int_{\partial_{\ve}(I_{j}\setminus \cl
          R)\setminus \partial_{\ve}I_{j}}\rho_{j}
        +\int_{\partial_{\ve}I_{j}}\rho_{j}\right) \\
      &\le \sum_{j} w_{j}\left( \Ca
        \frac{\leb(\partial_{\ve}(I_{j}\setminus \cl
          R)\setminus \partial_{\ve}I_{j})}{\leb(\partial_{\ve}I)}\int_{\partial_{\ve}I_{j}}\rho_{j}
        +\int_{\partial_{\ve}I_{j}}\rho_{j}\right)\\
      &\le (\Ca C_{\cR}+1)\abs{\partial_{\ve}\cG},
    \end{split}
  \]
  where in the second line we have used the Comparability \Cref{Fed}
  and in the last line we have used \eqref{eq:622}. Since \(\cG\) is
  \(\proper\)--proper,
  \(\abs{\partial_{\ve}\cG} \le \proper \ve \abs{\cG}\); moreover
  \eqref{eq:621} can be used to show that
  \(\abs{\cG} \le \Ca c_{\cR}^{-1} \abs{\hat\cG}\). Indeed, by
  \Cref{Fed},
  \[
    \abs{\hat\cG} \ge \sum_{j}w_{j}\int_{I_{j}\setminus R} \rho_{j}
    \ge \sum_{j} w_{j} \Caa \frac{\leb(I_{j}\setminus
      R)}{\leb(I_{j})}\int_{I_{j}}\rho_{j} \ge \Caa c_{\cR}\abs{\cG}.
  \]
  It follows that
  \(\abs{\partial_{\ve}\hat \cG} \le \bar C_{\cR} \proper \ve
  \abs{\hat \cG}\).
\end{proof}

\begin{lem} \label{lem-fixedratio} Let \(\cR=\{R_{k}\}_{k=1}^{N}\) be
  the partition from \ref{hyp6}. There exists a constant \(t >0\) such
  that if \(\cG=\{(I_{j}, \rho_{j})\}_{j \in \cJ}\) is an
  \((\modreg, \epstail, \proper)\)--proper standard family, then

  \begin{equation} \label{eq:bound2} \sum_{j \in \cJ_{reg}} w_{j}
    \int_{R(I_{j})} \rho_{j} \geq t \cdot \left(\sum_{j \notin
        \cJ_{reg}} w_{j} + \sum_{j \in \cJ_{reg}} w_{j} \int_{I_{j}
        \setminus R(I_{j})} \rho_{j} \right),
  \end{equation}
  where \(\cJ_{reg}\) is the set of \(j \in \cJ\) such that \(I_{j}\)
  is \(\delta_{0}\)-regular
\end{lem}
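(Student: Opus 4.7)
The plan is to control the two terms on the right-hand side separately against the left-hand side, and combine. The key inputs will be the properness of $\cG$, the choice $\delta_0 = 1/(3\proper)$, the finiteness of $\cR$, and the Comparability Lemma.

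First, I would handle the non-regular contribution $\sum_{j \notin \cJ_{reg}} w_j$. By definition, $j \notin \cJ_{reg}$ means $\leb(I_j \setminus \partial_{\delta_0} I_j)=0$, so $\int_{\partial_{\delta_0} I_j}\rho_j = \int_{I_j}\rho_j = 1$. Therefore
\[
\sum_{j \notin \cJ_{reg}} w_j \;\le\; \sum_{j \in \cJ} w_j \int_{\partial_{\delta_0} I_j}\rho_j \;=\; |\partial_{\delta_0}\cG| \;\le\; \proper\,\delta_0\,|\cG| \;=\; \tfrac{1}{3}|\cG|,
\]
by properness and the choice of $\delta_0$. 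In particular, $\sum_{j \in \cJ_{reg}} w_j \ge \tfrac{2}{3}|\cG| \ge 2\sum_{j \notin \cJ_{reg}} w_j$.

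Second, I would obtain a uniform lower bound on $\int_{R(I_j)}\rho_j$ for $j \in \cJ_{reg}$. Since $\cR=\{R_1,\dots,R_N\}$ is a finite partition into open sets of positive measure, $r_{\min}:=\min_{1\le k \le N}\leb(R_k)>0$. On the other hand, $\diam I_j \le \epstail$ so $\leb(I_j)$ is bounded by a fixed constant $M_{\uspace}$ (coming from the standing assumption on measures of small balls). Since $R(I_j)\subset I_j$ and $H(\rho_j)\le \modreg$, the Comparability Lemma gives
\[
\int_{R(I_j)}\rho_j \;\ge\; \Caa\,\frac{\leb(R(I_j))}{\leb(I_j)}\int_{I_j}\rho_j \;\ge\; \Caa\,\frac{r_{\min}}{M_{\uspace}} \;=:\; \alpha.
\]
Since $\int_{I_j\setminus R(I_j)}\rho_j \le 1$, summing with weights yields
\[
\sum_{j \in \cJ_{reg}} w_j \int_{R(I_j)}\rho_j \;\ge\; \alpha \sum_{j \in \cJ_{reg}} w_j \;\ge\; \alpha \sum_{j \in \cJ_{reg}} w_j \int_{I_j\setminus R(I_j)}\rho_j.
\]
Combining with the first step also gives
\[
\sum_{j \in \cJ_{reg}} w_j \int_{R(I_j)}\rho_j \;\ge\; \alpha\sum_{j \in \cJ_{reg}} w_j \;\ge\; 2\alpha \sum_{j \notin \cJ_{reg}} w_j.
\]

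Finally, averaging the two lower bounds, the lemma holds with $t = \alpha/2 = \Caa r_{\min}/(2M_{\uspace})$. There is no real obstacle here: the argument is a direct application of properness (to guarantee that a definite fraction of the weight sits on regular pairs) and the Comparability Lemma (to convert the geometric estimate $\leb(R(I_j))\gtrsim r_{\min}$ into an estimate for the mass $\int_{R(I_j)}\rho_j$). The only small care required is that the constant $t$ be tracked explicitly in terms of the finite-partition data of $\cR$ and the uniform measure bound on sets of diameter $\epstail$.
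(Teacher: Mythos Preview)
Your proof is correct and follows essentially the same route as the paper: use properness together with $\delta_0 = 1/(3\proper)$ to show at least $2/3$ of the weight sits on regular pairs, then invoke the Comparability Lemma and the finite minimum $r_{\min}=\min_k\leb(R_k)$ to get a uniform lower bound for $\int_{R(I_j)}\rho_j$. The only minor difference is that you bound the remainder $\int_{I_j\setminus R(I_j)}\rho_j$ trivially by $1$, whereas the paper applies the Comparability Lemma a second time to that term, yielding a different (but equally valid) explicit constant $t$.
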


\begin{proof}
  Since \(\cG\) is an \((\modreg, \epstail, \proper)\)--proper
  standard family, at least \(2/3\) of its weight is concentrated on
  \((a_{0}, \epstail)\)--standard pairs \((I, \rho)\), where \(I\) is
  a \(\delta_{0}\)-regular set (recall that \(\delta_0 = 1/(3\proper)\)). By
  \cref{Ritemtwothree} of \ref{hyp6}, each such standard pair contains
  an element from the collection~\(\cR\).  Using this fact and the
  regularity of standard pairs (recall (\ref{eq:comp1})), the
  left-hand side of \eqref{eq:bound2} is
  \[
    \geq (2/3)\abs{\cG}\Caa \leb(R(I_{j}))/\leb(I_{j}) \geq (2/3)\Caa
    \Cball(\epstail)^{-1}\leb(R(I_{j})).
  \]
  Now consider the expression in the parentheses and on the right-hand
  side of \eqref{eq:bound2}. The first term of this expression is the
  total weight of the standard pairs that are not
  \(\delta_{0}\)-regular so this term is \(\leq (1/3) \abs{\cG}\). The
  second term represents the weights of the remainders, after removing
  \(\cl R(I_{j})\), from each \(\delta_{0}\)-regular \(I_{j}\). This
  sum is
  \[\begin{split} &\leq \Ca \sum_{j}w_{j}\leb (I_{j}\setminus
      R(I_{j}))/\leb(I_{j}) \leq \Ca \leb(\cB_{\epstail}\setminus
      R(I_{j}))/\leb(R(I_{j})) \sum_{j}w_{j} \\
      &= \Ca \leb(\cB_{\epstail}\setminus R(I_{j}))/\leb(R(I_{j}))
      \abs{\cG}\le \Ca \Cball(\epstail)/\leb(\cR) \abs{\cG},
    \end{split}\] where
  \(\leb(\cR) = \min_{1\le k\le N} \leb(R_{k})\). So the expression in
  the parentheses and on the right-hand side of \eqref{eq:bound2} is
  \(\leq \abs{\cG} (1/3+\Ca \Cball(\epstail)/\leb(\cR))\). Therefore
  the inequality \eqref{eq:bound2} is satisfied if we take:
  \begin{equation} \label{eq:t} t=\frac{(2/3) \abs{\cG}\Caa
      \Cball(\epstail)^{-1}\leb(\cR)}{\abs{\cG} (1/3+\Ca
      \Cball(\epstail)/\leb(\cR))} =\frac{(2/3)\Caa
      \Cball(\epstail)^{-1}\leb(\cR)^{2}}{(1/3)\leb(\cR)+\Ca
      \Cball(\epstail)} .
  \end{equation}
\end{proof}

\begin{proof}[Proof of \Cref{prop-GM}]
  The following steps lead to our sought after inducing scheme.
  \begin{enumerate}[label=(\arabic*)]
  \item Consider the partition \(\cR\) of \(\uspace\). Let us focus on
    defining the inducing scheme on one element of this partition. The
    same can be done for all other partition elements and in a uniform
    way because \(\cR\) is finite. Fix \(R \in \cR\) and let
    \(\cG_{0}=\{(R, \Id_{R}/\leb(R))\}\) and \(w_{0}=\leb(R)>0\). Due
    to \cref{Ritemone} of \ref{hyp6}, the singleton family \(\cG_{0}\)
    with associated weight \(\{w_{0}\}\) is an
    \((\modreg, \epstail, B)\)--proper standard family \(\cG_{0}\) for
    some constant \(B >0\) possibly larger than \(\proper\).

  \item \label{recstep} By \Cref{rem:Brecov},
    \(\cG_{1}:=\cT^{n_{rec}(B)}\hat \cG_{0}\) is an
    \((\modreg, \epstail, \proper)\)--proper standard family.
  
  \item \label{repeatone} By \cref{Ritemtwothree} of \ref{hyp6}, every
    standard pair in \(\cG_{1}\) whose domain is
    \(\delta_{0}\)-regular contains an element \(R_{k}\),
    \(1 \le k \le N\), from the collection \(\cR\). ``Stop'' such
    standard pairs of \(\cG_{1}\) on \(R_{k} \in \cR\). By stopping we
    mean going back to \(R\) and defining the return time
    \(\tau = n_{rec}(B)\) on the subset of \(R\) that maps onto
    \(R_{k}\) under \(T^{n_{rec}(B)}\).  By \Cref{lem-fixedratio}, the
    ratio of the removed weight from \(\cG_{1}\) to the weight of the
    remainder family, which we denote by \(\hat \cG_{1}\), is at least
    some positive constant \(t\) given by \eqref{eq:t}. Note that
    since total weight is preserved under iteration, this corresponds
    to defining \(\tau\) on a subset \(A \subset R\) such that
    \(\leb(A) \ge t \cdot \leb(R \setminus A)\). Also, by
    \Cref{lem-remainderfam}, \(\hat \cG_{1}\) is an
    \((\modreg, \epstail, \bar C_{\cR} \proper)\)--standard family.
  \item \label{repeattwo} Just as in step \ref{recstep},
    \(\cG_{2}:=\cT^{n_{rec}(\bar C_{\cR}\proper)}\hat \cG_{1}\) is an
    \((\modreg, \epstail, \proper)\)--proper standard family so we can
    apply step \ref{repeatone} to it.
  \item Repeat the steps \ref{repeatone}, \ref{repeattwo}
    \(\rightarrow\) \ref{repeatone}, \ref{repeattwo}
    \(\rightarrow \cdots\), incrementing the indices accordingly
    during the process.
  \end{enumerate}

  Applying the above inductive procedure, we will get a ``stopping
  time'' (or return time) \(\tau: \uspace \to \bN\) defined on a (mod
  \(0\))-partition \(\cP'\) of \(\uspace\). \(\cP'\) is a refinement
  of the partition \(\cP\) and \(\tau\) is constant on each element of
  \(\cP'\). The return time \(\tau\) will have exponential tails
  because at each step (where the time between steps is universally
  bounded by \(n_{rec}(B)+n_{rec}(\bar C_{\cR}\proper)\)) it is
  defined on a set \(A \subset X\), where
  \(\leb(A) \ge t \cdot \leb(X\setminus A)\). By construction the
  induced map has finitely many images which form a sub-collection of
  \(\cR\). Note that distortion bound is always maintained under
  iterations of \(T\) by assumptions \ref{hyp1} and \ref{hyp2} so we need not worry
  about it.
\end{proof}

\subsection{Inducing scheme 2}
\label{subsec:fullbranchGM}

In this subsection we make a stronger assumption than \ref{hyp6}, but
also prove a stronger result in which the inducing scheme has a
full-branched Gibbs-Markov base map and a return time that has \(\gcd
=1\) in addition to having exponential tails. Such an inducing scheme
is much more useful in obtaining statistical properties of \(T\)
beyond the existence of finitely many ACIPs.

We assume the following.

\vspace{0.3 cm}
\begin{hyp}[Partition \(\cR\)]\label{hyp7}
  There exist a finite (mod \(0\))-partition
  \(\cR=\{R_{j}\}_{j=1}^{N}\) of \(\uspace\) into open sets such that
  \begin{enumerate}[label=(\arabic*)]
  \item \label{ZRitemone} for every \(1 \le j \le N\),
    \(\sup_{\ve>0} \ve^{-1}\leb(\partial_{\ve}R_{j}) <\infty\),
  \item \label{ZRitemtwothree}
    \(\exists c_{\cR} \in (0,1), C_{\cR}>0\), possibly depending on
    \(\delta_{0}\), s.t. for every \(\delta_{0}\)-regular set \(I\),
    \(\diam I \le \epstail\), there exists \(R=R(I) \in \cR\) s.t.
    \(I \supset R\) and
    \begin{eqnarray}
      \label{eq:Z621} \text{ if }
      \leb(I \setminus R) \neq 0, \text{ then }\leb(I \setminus R)&\ge& c_{\cR}\leb(I); \\
      \label{eq:Z622}\leb(\partial_{\ve}(I \setminus \cl R)\setminus \partial_{\ve}I)
                                                                  &\le& C_{\cR}\leb(\partial_{\ve} I).
    \end{eqnarray}
  \item \label{ZRproperties}\(\exists Z \in \cR\) s.t.
    \begin{equation} \label{eq:Zdiam} \diam Z \le \eta \epstail;
    \end{equation}
    for every \(\delta_{0}\)-regular set \(I \subset \uspace\),
    \(\diam I \le \epstail\) and \(\leb(I \setminus Z) \neq 0\),
    \begin{eqnarray}
      \label{eq:Zcomplement}\leb(I \setminus Z)&\ge& c_{\cR}\leb(I), \\
      \label{eq:Zbd}\leb(\partial_{\ve}(I \setminus \cl Z)\setminus \partial_{\ve}I) &\le&
                                                                                           C_{\cR}\leb(\partial_{\ve} I)\\
      \label{eq:Zgcd} \gcd\{ n : T^{n}Z \supset Z\} &=& 1.
    \end{eqnarray}
  \end{enumerate}
\end{hyp}

\begin{rem}
Notice that the first two items of \ref{hyp7} are the same as \ref{hyp6}.
\end{rem}
Under assumptions \ref{hyp1}-\ref{hyp4}, \ref{hyp7} we prove the
following.
\begin{prop} \label{prop-fullM} There exists a refinement \(\cP''\) of
  the partition \(\cP\) of \(T:X \circlearrowleft\) into open sets
  (mod \(0\)), a
  set \(Z\) (the one from \ref{hyp7}) consisting of elements of \(\cP''\) and a map
  \(\tilde \tau: Z \to\bZ^+\) constant on elements of \(\cP''\) such
  that
  \begin{enumerate}[label=(\alph*)]
  \item \label{prop-fullM-fullbr} The map
    \(\tilde G= T^{\tilde\tau}:Z \circlearrowleft\) is a full-branched
    Gibbs-Markov map.
  \item
    \label{prop-fullM-gcd}\(\gcd\{n \ge 1: \leb(\{\tilde \tau =n\})
    >0\} = 1\).
  \item
    \label{prop-fullM-tails}\(\leb(\tilde \tau>n)\le const \cdot
    \tilde \kappa^n\) for some \(\tilde \kappa \in (0,1)\).
  \end{enumerate}
\end{prop}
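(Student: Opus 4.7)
The plan is to bootstrap off \Cref{prop-GM}, whose hypothesis \ref{hyp6} is included in \ref{hyp7}(1)--(2), and then induce once more by taking the first return to \(Z\) of the preliminary Gibbs--Markov map. Full-branching will follow automatically from the Markov property of \(G\), the gcd condition from \eqref{eq:Zgcd}, and the exponential tail by a standard stopped-sum/spectral argument for Gibbs--Markov maps.

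Concretely, apply \Cref{prop-GM} to obtain a Gibbs--Markov map \(G=T^\tau:\uspace\circlearrowleft\) with finitely many images \(\{Z_1,\dots,Z_q\}\subset\cR\) and \(\leb(\tau>n)\le c\kappa^n\). Running that construction with the convention that one always prefers to stop on \(Z\) when a \(\delta_0\)-regular piece contains \(Z\) (which is consistent with \ref{hyp7}(2)--(3), since \(Z\in\cR\) plays the same role as a generic \(R\in\cR\), and \eqref{eq:Zgcd} guarantees that such pieces actually arise when iterating a standard family supported on \(Z\)), one secures \(Z\) as an image, say \(Z=Z_1\). Define \(R(x)=\min\{k\ge 1:G^kx\in Z\}\) for \(x\in Z\), set \(\tilde G=G^R|_Z\) and \(\tilde\tau(x)=\sum_{k=0}^{R(x)-1}\tau(G^kx)\), and let \(\cP''\) consist of the atoms
\[
A=A_0\cap G^{-1}A_1\cap\cdots\cap G^{-(R-1)}A_{R-1},\qquad A_k\in\cP',
\]
with \(A_0\subset Z\), \(G(A_k)\neq Z\) for \(0\le k\le R-2\), and \(G(A_{R-1})=Z\). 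By the Markov property of \(G\), each such \(A\) is mapped bijectively by \(\tilde G=T^{\tilde\tau}\) onto \(Z\), yielding \ref{prop-fullM-fullbr}; bounded distortion of \(\tilde G\) follows by composition from that of \(G\), uniformly in \(R\), because the distortion constants of a Gibbs--Markov map are summable along orbits.

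For \ref{prop-fullM-gcd}, every \(n\) with \(T^nZ\supset Z\) furnishes an inverse branch of \(T^n\) sending \(Z\) to a positive-measure subset of \(Z\) on which \(\tilde\tau=n\) (after decomposing that branch into \(G\)-returns via \(\cP''\)), so \(\gcd\{n:\leb(\tilde\tau=n)>0\}\) divides every \(n\) with \(T^nZ\supset Z\) and hence equals \(1\) by \eqref{eq:Zgcd}. For \ref{prop-fullM-tails}, the induced Markov chain on \(\{Z_1,\dots,Z_q\}\) is finite-state and irreducible (transitivity is built into the inducing construction), so the first return \(R\) to \(Z\) has exponential tails in the \(G\)-count; together with \(\leb(\tau>n)\le c\kappa^n\), this yields the exponential tail of \(\tilde\tau\) in \(T\)-iterates via the standard argument that, for \(s>0\) small, a suitable twist of the Gibbs--Markov transfer operator of \(G\) by \(e^{s\tau}\) retains its spectral gap, giving \(\int_Z e^{s\tilde\tau}\,d\leb<\infty\).

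The main difficulty is \ref{prop-fullM-tails}: estimating the tail of a sum \(\tilde\tau=\sum_{k<R}\tau\circ G^k\) whose summands are dependent and whose number of summands \(R\) is itself random. The spectral-perturbation route above is the cleanest; an elementary alternative is to split
\[
\{\tilde\tau>n\}\subset\{R>n/M\}\cup\bigcup_{r\le n/M}\{R=r,\ \textstyle\sum_{k<r}\tau\circ G^k>n\},
\]
apply a union bound to the second piece, and conclude from the exponential tails of \(R\) and \(\tau\) by choosing \(M\) large enough that \(c\kappa^{n/M}\) dominates the accumulated factors from the union bound.
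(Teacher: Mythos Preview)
Your overall architecture---run \Cref{prop-GM}, then take the first $G$-return to $Z$---is exactly what the paper does, and your arguments for \ref{prop-fullM-fullbr} and \ref{prop-fullM-tails} are essentially the paper's. The gap is in \ref{prop-fullM-gcd}.

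Your claim that every $n$ with $T^nZ\supset Z$ yields a positive-measure set on which $\tilde\tau=n$ is false. By construction $\tilde\tau=\sum_{k<R}\tau\circ G^k$ is always a \emph{sum of values of $\tau$}, and the bare \Cref{prop-GM} construction gives $\tau$ values only in the arithmetic progression $n_{rec}(B)+k\,n_{rec}(\bar C_{\cR}\proper)$, $k\ge 0$. If $d:=\gcd(n_{rec}(B),n_{rec}(\bar C_{\cR}\proper))>1$ then every value of $\tau$, hence every value of $\tilde\tau$, lies in $d\bZ$, regardless of $\cN_Z$. A $T^n$-branch $h:Z\to h(Z)\subset Z$ need not ``decompose into $G$-returns'': the intermediate stopping times along the $T$-orbit are dictated by $\tau$, not by $h$, and there is no reason they sum to $n$.

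The paper's fix is to \emph{modify the construction of $\tau$ on $Z$} before passing to the first return. One first picks $\tilde n_1<\dots<\tilde n_K$ in $\cN_Z=\{n:T^nZ\supset Z\}$ with $\gcd=1$, then replaces them by $n_1<\dots<n_K\in\cN_Z$ (still $\gcd=1$) satisfying $n_1\ge n_{rec}(B)$ and $n_{j+1}-n_j\ge n_{rec}(\bar C_{\cR}\proper)$. Starting from $\cG_0=\{(Z,\Id_Z/\leb(Z))\}$ one iterates to time $n_1$, using the $V_*=\cl Z$ option in \ref{hyp4} (this is where \eqref{eq:Zdiam} is used) so that artificial chopping does not cut through $Z$; one then sets $\tau=n_1$ on $A_1:=T^{-n_1}Z\cap Z$, applies \Cref{lem-remainderfamZ} to the remainder (this is where \eqref{eq:Zcomplement}, \eqref{eq:Zbd} enter), recovers in $n_2-n_1$ steps, sets $\tau=n_2$ on $A_2:=T^{-n_2}Z\cap(Z\setminus A_1)$, and so on up to $n_K$. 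After that one continues exactly as in \Cref{prop-GM}. The point is that $G(A_j)=T^{n_j}A_j=Z$, so the first $G$-return $\vs=1$ on each $A_j$ and hence $\tilde\tau=\tau=n_j$ there, giving $\gcd\{\tilde\tau\}=1$. Your sketch invokes \eqref{eq:Zgcd} but never uses \eqref{eq:Zdiam}, \eqref{eq:Zcomplement}, \eqref{eq:Zbd}; those conditions are there precisely to make this planting step go through.
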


Before we get to the proof of this proposition we need a slight
variation of \Cref{lem-remainderfam} that concerns the set
\(Z\). Setting \(R=Z\), the only difference to \Cref{lem-remainderfam}
is that we do not require \(I \supset R\). The proof is essentially
the same as the proof of \Cref{lem-remainderfam}; nevertheless, we
provide it.
\begin{lem} \label{lem-remainderfamZ} Suppose \(\cG\) is an
  \((\modreg, \epstail, \proper)\)--proper standard family.  Let
  \(\hat \cG\) be the family obtained from \(\cG\) by replacing each
  \((I, \rho)\) of weight \(w\), having a \(\delta_{0}\)-regular
  domain, and satisfying \(\leb(I \setminus Z) \neq 0\), by
  \((I\setminus \cl Z, \rho \Id_{I \setminus \cl Z}/\int_{I \setminus
    Z} \rho)\) of weight \(w \int_{I \setminus Z} \rho\). Then
  \(\hat \cG\) is an
  \((\modreg, \epstail, \bar C_{\cR}\proper )\)--proper standard
  family, where \(\bar C_{\cR} = (\Ca C_{\cR}+1) \Ca c_{\cR}^{-1}\).
\end{lem}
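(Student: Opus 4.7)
The plan is to follow the template of \Cref{lem-remainderfam} almost verbatim, substituting \eqref{eq:Zbd} in place of \eqref{eq:622} and \eqref{eq:Zcomplement} in place of \eqref{eq:621}. The essential observation is that the inequalities we need only depend on the pair $(I, Z)$ and, crucially, do not require the containment $I \supset Z$ that was available in \Cref{lem-remainderfam}; since $Z$ is a fixed element of $\cR$, the conditions in \Cref{hyp7}\ref{ZRproperties} apply uniformly.

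First I would verify that each replaced element $(I \setminus \cl Z, \rho \Id_{I\setminus \cl Z}/\int_{I\setminus Z}\rho)$ is still an $(\modreg, \epstail)$--standard pair: its domain is open (since $I$ is open and $\cl Z$ is closed), has diameter $\leq \diam I \leq \epstail$, and the renormalized density inherits the log-H\"older bound $H(\cdot) \leq \modreg$ from $\rho$. By \Cref{hyp7}\ref{ZRitemone} we have $\leb(\partial Z)=0$, so $\leb(I\setminus \cl Z)=\leb(I\setminus Z)$ and the normalizing denominators agree.

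Next I would bound $|\partial_\ve \hat \cG|$ for $\ve<\epstail$. Writing $\cG=\{(I_j,\rho_j)\}$ with weights $w_j$, only the modified pairs contribute differently; the others directly form a sub-sum of $|\partial_\ve \cG|$. For a modified pair, I would use the inclusion
\[
\partial_\ve(I_j \setminus \cl Z) \subset \bigl(\partial_\ve(I_j \setminus \cl Z)\setminus \partial_\ve I_j\bigr) \cup \partial_\ve I_j,
\]
apply the Comparability \Cref{Fed} to estimate the integral of $\rho_j$ on the first piece by $\Ca \cdot \leb(\partial_\ve(I_j\setminus \cl Z)\setminus \partial_\ve I_j)/\leb(\partial_\ve I_j) \cdot \int_{\partial_\ve I_j}\rho_j$, and invoke \eqref{eq:Zbd} to bound this ratio by $C_{\cR}$. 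Summing yields $|\partial_\ve \hat \cG| \leq (\Ca C_{\cR}+1)|\partial_\ve \cG| \leq (\Ca C_{\cR}+1)\proper \abs{\cG}\ve$.

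Finally I would compare total weights. For each modified pair, using \Cref{Fed} together with \eqref{eq:Zcomplement}, one obtains $\int_{I_j \setminus Z}\rho_j \geq \Caa \leb(I_j \setminus Z)/\leb(I_j) \geq \Caa c_{\cR}$; for unmodified pairs, the weight is unchanged and trivially $1 \geq \Caa c_{\cR}$. Summing over $j$ gives $|\hat \cG| \geq \Caa c_{\cR}|\cG|$, i.e.\ $|\cG| \leq \Ca c_{\cR}^{-1}|\hat \cG|$. Combining the two estimates produces $|\partial_\ve \hat \cG| \leq (\Ca C_{\cR}+1)\Ca c_{\cR}^{-1}\proper \ve |\hat \cG| = \bar C_{\cR}\proper \ve |\hat \cG|$, as claimed. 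I anticipate no genuine obstacle: the mechanics are identical to the proof of \Cref{lem-remainderfam}, and the only subtlety worth flagging in the writeup is that \eqref{eq:Zcomplement}--\eqref{eq:Zbd} are stated for the fixed set $Z$, so the argument goes through even when $Z \not\subset I_j$.
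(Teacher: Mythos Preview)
Your proposal is correct and follows essentially the same approach as the paper: bound $|\partial_\ve \hat\cG|$ via the Comparability \Cref{Fed} and \eqref{eq:Zbd}, then compare total weights using \Cref{Fed} and \eqref{eq:Zcomplement}, combining to get the constant $\bar C_{\cR}$. You include slightly more detail (verifying the replaced pairs are standard, treating unmodified pairs explicitly), but the argument is the same as the paper's.
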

\begin{proof}
  This is a consequence of \eqref{eq:Zbd}. Indeed, assuming
  \(\cG=\{(I_{j}, \rho_{j})\}\) with associated weights \(w_{j}\), we
  have, \(\forall \ve < \epstail\),
  \[
    \begin{split}
      \abs{\partial_{\ve}\hat \cG} &\le \sum_{j} w_{j}
      \int_{\partial_{\ve}(I_{j}\setminus \cl Z)}\rho_{j} \le \sum_{j}
      w_{j} \left( \int_{\partial_{\ve}(I_{j}\setminus \cl
          Z)\setminus \partial_{\ve}I_{j}}\rho_{j}
        +\int_{\partial_{\ve}I_{j}}\rho_{j}\right) \\
      &\le \sum_{j} w_{j}\left( \Ca
        \frac{\leb(\partial_{\ve}(I_{j}\setminus \cl
          Z)\setminus \partial_{\ve}I_{j})}{\leb(\partial_{\ve}I)}\int_{\partial_{\ve}I_{j}}\rho_{j}
        +\int_{\partial_{\ve}I_{j}}\rho_{j}\right)\\
      &\le (\Ca C_{\cR}+1)\abs{\partial_{\ve}\cG},
    \end{split}
  \]
  where in the second line we have used the Comparability \Cref{Fed}
  and in the last line we have used \eqref{eq:Zbd}. Since \(\cG\) is
  \(\proper\)--proper,
  \(\abs{\partial_{\ve}\cG} \le \proper \ve \abs{\cG}\); moreover
  \eqref{eq:Zcomplement} can be used to show that
  \(\abs{\cG} \le \Ca c_{\cR}^{-1} \abs{\hat\cG}\). It follows that
  \(\abs{\partial_{\ve}\hat \cG} \le \bar C_{\cR} \proper \ve
  \abs{\hat \cG}\).
\end{proof}

\begin{proof}[Proof of \Cref{prop-fullM}]
  Let \(Z \subset \uspace\) be as in \ref{hyp7}. Let
  \(\cN_{Z}=\{n : T^{n}Z \supset Z\}\). Since \(\gcd(\cN_{Z})=1\),
  there exists \(K \in \bN\) and
  \(\{\tilde n_{j}\}_{j=1}^{K} \subset \cN_{Z}\) such that
  \(\gcd\{\tilde n_{j}\}_{j=1}^{K}=1\). Let us assume that
  \(\tilde n_{1}<\tilde n_{2}<\dots<\tilde n_{K}\). Note that if
  \(K=1\), then \(1 \in \cN_{Z}\) and therefore
  \(\bN \subset \cN_{Z}\). So without loss of generality we can assume
  that \(K \ge 2\).

  Now we follow a line of reasoning similar to that of the proof of
  \Cref{prop-GM}, but with some modifications when dealing with
  \(R=Z\) mainly in order to achieve \cref{prop-fullM-gcd} of
  \Cref{prop-fullM}.

  \begin{enumerate}[label=(\arabic*)]
  \item Let \(\cG_{0}= \{Z, \Id_{Z}/\leb(Z)\}\) and \(w_{0}=\leb(Z)\).
    \(\cG_{0}\) is an \((\modreg, \epstail, B)\)--proper standard
    family for some
    \(B>0\).

    Let \(m_{1}, m_{2} \in \bN\) be s.t.
    \(\tilde n_{1}+m_{1}\tilde n_{K} \geq n_{rec}(B)\) and
    \(m_{2}\tilde n_{K}\ge n_{rec}(\bar C_{\cR}\proper)\). Set
    \(m_{0}= \max\{m_{1},m_{2}\}\) and define \(\{n_{j}\}_{j=1}^{K}\)
    by
    \[
      \begin{split}
        n_{j}&:=
        \tilde n_{j}+m_{0}\tilde n_{K}, \text{ if } 1\le j\le K-1; \\
        n_{K}&:=\tilde n_{K}+\sum_{j=1}^{K-1}n_{j}.
      \end{split}
    \]
    It is a simple exercise to verify that
    \( n_{1}< n_{2}<\dots< n_{K}\), \(\gcd\{n_{j}\}_{j=1}^{K}=1\) and
    \(\{n_{j}\}_{j=1}^{K} \subset \cN_{Z}\). The benefit of
    \(\{n_{j}\}_{j=1}^{K}\) over \(\{\tilde n_{j}\}_{j=1}^{K}\) is
    that
    \[
      n_{1}\ge n_{rec}(B) \text{ and } n_{j+1}-n_{j} \ge n_{rec}(\bar
      C_{\cR}B_{0}),\ \forall j \in \{1,\dots, K-1\}.
    \]
  \item Let \(\cG_{1}:=\cT^{n_{1}}\cG_{0}\), taking \(V_{*}= \cl Z\)
    as the set to avoid under \(\cT^{n_{1}}\) under artificial
    chopping. This can be done due to \eqref{eq:Zdiam} and condition
    \ref{hyp4} on divisibility of large sets. Since
    \(n_{1} \ge n_{rec}(B)\), \(\cG_{1}\) is an
    \((\modreg, \epstail, \proper)\)--proper standard family.
    \begin{enumerate}[label=(\arabic{enumi}.\arabic*)]
    \item Define \(\tau = n_{1}\) on \(A_{1}:=T^{-n_{1}}Z\cap Z
      \). Note that \(T^{n_{1}}A_{1}=Z\) since
      \(T^{n_{1}}(Z)\supset Z\). By \Cref{lem-remainderfamZ}, the
      remainder from \(\cG_{1}\), which we denote by \(\hat \cG_{1}\)
      is an \((\modreg, \epstail, \bar C_{\cR}\proper)\)--proper
      standard family. Let
      \(\cG_{2}=\cT^{n_{2}-n_{1}}\hat\cG_{1}\). Since
      \(n_{2}-n_{1} \ge n_{rec}(\bar C_{\cR}\proper)\), \(\cG_{2}\) is
      an \((\modreg, \epstail, \proper)\)--proper standard family.
    \item Define \(\tau = n_{2}\) on
      \(A_{2}:=T^{-n_{2}}Z\cap (Z \setminus A_{1}) \). By
      \Cref{lem-remainderfamZ}, the remainder from \(\cG_{2}\), which
      we denote by \(\hat \cG_{2}\) is an
      \((\modreg, \epstail, \bar C_{\cR}\proper)\)--proper standard
      family. Let \(\cG_{3}=\cT^{n_{3}-n_{2}}\hat\cG_{2}\). Since
      \(n_{3}-n_{2} \ge n_{rec}(\bar C_{\cR}\proper)\), \(\cG_{2}\) is
      an \((\modreg, \epstail, \proper)\)--proper standard family.
    \item We continue this process until we define \(\tau = n_{K}\) on
      \[
        A_{K}:=T^{-n_{K}}Z \cap (Z \setminus
        \bigcup_{j=1}^{K-1}A_{j}).
      \]
      Let \(\hat \cG_{K}\) be the remainder from \(\cG_{K}\). Note
      that \(\hat \cG_{K}\) is an
      \((\modreg, \epstail, \bar C_{\cR}\proper)\)--proper standard
      family. Also note that \(\forall j \in \{1,\dots, K\}\),
      \(A_{j} \subset Z\) and \(T^{n_{j}}A_{j}=Z\). Moreover,
      \(\leb(A_{j})>0\) because \(\forall j \in \{1,\dots, K\}\) the
      inverse branches of \(T^{n_{j}}\) are non-singular, there are at
      most countably many such branches and \(\leb(Z)>0\).
    \end{enumerate}
  \item We have achieved that
    \[
      \gcd\left\{n: \leb\left(\{\tau=n\} \cap
          \bigcup_{j=1}^{K}A_{j}\right) > 0\right\} =1.
    \]
    We continue the construction of \(\tau\) on the rest of \(Z\),
    i.e. on \(\hat Z = Z \setminus \bigcup_{j=1}^{K} A_{j}\), in such
    a way that it has exponential tails. We will do so by continuing
    to iterate \(\hat \cG_{K}\).
  \item \label{Zrepeatone} Let
    \(\cG_{K+1} = \cT^{n_{rec}(\bar C_{\cR}\proper)}\hat
    \cG_{K}\). Then \(\cG_{K+1}\) is an
    \((\modreg, \epstail, \proper)\)--proper standard family. By
    \cref{ZRitemtwothree} of \ref{hyp7}, every standard pair in
    \(\cG_{K+1}\) whose domain is \(\delta_{0}\)-regular contains an
    element \(R_{k}\), \(1 \le k \le N\), from the collection
    \(\cR\). ``Stop'' such standard pairs of \(\cG_{K+1}\) on
    \(R_{k} \in \cR\). By stopping we mean going back to
    \(\hat Z \subset Z\) and defining the return time
    \(\tau = n_{K}+n_{rec}(\bar C_{\cR}\proper)\) on the subset of
    \(\hat Z\) that maps onto \(R_{k}\) under
    \(T^{n_{K}+n_{rec}(\bar C_{\cR}\proper)}\).  By
    \Cref{lem-fixedratio}, the ratio of the removed weight from
    \(\cG_{K+1}\) to the weight of the remainder family, which we
    denote by \(\hat \cG_{K+1}\), is at least some positive constant
    \(t\) given by \eqref{eq:t}. Note that since the total weight is
    preserved under iteration, this corresponds to defining \(\tau\)
    on a subset \(A \subset \hat Z\) such that
    \(\leb(A) \ge t \cdot \leb(\hat Z \setminus A)\). Also, by
    \Cref{lem-remainderfam}, \(\hat\cG_{K+1}\) is an
    \((\modreg, \epstail, \bar C_{\cR} \proper)\)--standard family.

  \item \label{Zrepeattwo}
    \(\cG_{K+2}:=\cT^{n_{rec}(\bar C_{\cR}\proper)}\hat \cG_{K+1}\) is
    an \((\modreg, \epstail, \proper)\)--proper standard family so we
    can apply step \ref{Zrepeatone} to it.
  \item Repeat the steps \ref{Zrepeatone}, \ref{Zrepeattwo}
    \(\rightarrow\) \ref{Zrepeatone}, \ref{Zrepeattwo}
    \(\rightarrow \cdots\), incrementing the indices accordingly
    during the process. This procedure defines \(\tau\) on \(\hat Z\)
    up to a measure zero set of points (which includes points that map
    into \(\partial Z\)).
  \end{enumerate}

  The above steps described how to define \(\tau\) on \(Z\). We have
  also explained how to define \(\tau\) on the rest of the elements of
  \(\cR\) in \Cref{sec:inducingscheme1} (Recall that condition
  \ref{hyp7} is stronger than \ref{hyp6} so the results of the
  previous subsection are valid). Putting these together we get the
  same statement as \Cref{prop-GM}, but with the additional properties
  that \(\gcd\{n: \leb(\{\tau=n\} > 0\} =1\), \(Z\) is one of the
  finitely many images of \(G=T^{\tau}\) and that \(G(Z) \supset Z\).

  Let \(\vs:Z \to \bN\) be the first return time of \(G\) to \(Z\) and
  \(\tilde G = G^{\vs}: Z \circlearrowleft\) be the associated first
  return map. Since \(GA_{j} =T^{\tau}A_{j}=T^{n_{j}}A_{j}=Z\),
  \(\forall j \in \{1, \dots, K\}\), it follows that \(\vs = 1\) on
  the set \(\bigcup_{j=1}^{K} A_{j}\).

  Define
  \(\tilde \tau = \sum_{\ell=0}^{\vs-1} \tau \circ G^{\ell} : Z \to
  \bN\), then \(\tilde G = T^{\tilde \tau}\). It follows from the
  previous paragraph that \(\tilde \tau = \tau\) on
  \(\bigcup_{j=1}^{K} A_{j} \subset Z\). This implies
  \cref{prop-fullM-gcd}. \Cref{prop-fullM-fullbr} and
  \cref{prop-fullM-tails} simply follow from the fact that \(G\) is a
  Markov map with finitely many states (hence \(\vs\) has exponential
  tails) and \(\tau:\uspace \to \bN\) has exponential tails.
  
\end{proof}

\subsection{Inducing scheme 3}
\label{subsec:fullGMone}

In this subsection we make an assumption which is again stronger than
\ref{hyp6}, but different from \ref{hyp7}. The assumption contains
more dynamical information than \ref{hyp7} and leads to an improvement
of \Cref{prop-fullM-gcd} of \Cref{prop-fullM}. The advantage of this
improvement is that it makes it easier to connect this inducing scheme
to other inducing schemes, say if one in interested in a system that
initially is not piecewise expanding but admits (somehow) an inducing
scheme with a base map that is piecewise expanding.

We assume the following.

\vspace{0.3 cm}
\begin{hyp}[Partition \(\cR\)]\label{hyp8}
  There exist a finite (mod \(0\))-partition
  \(\cR=\{R_{j}\}_{j=1}^{N}\) of \(\uspace\) into open sets such that
  \begin{enumerate}[label=(\arabic*)]
  \item \label{oneZRitemone} for every \(1 \le j \le N\),
    \(\sup_{\ve>0} \ve^{-1}\leb(\partial_{\ve}R_{j}) <\infty\),
  \item \label{oneZRitemtwothree}
    \(\exists c_{\cR} \in (0,1), C_{\cR}>0\), possibly depending on
    \(\delta_{0}\), s.t. for every \(\delta_{0}\)-regular set \(I\),
    \(\diam I \le \epstail\), there exists \(R=R(I) \in \cR\) s.t.
    \(I \supset R\) and
    \begin{eqnarray}
      \label{eq:oneZ621} \text{ if }
      \leb(I \setminus R) \neq 0, \text{ then }\leb(I \setminus R)&\ge& c_{\cR}\leb(I); \\
      \label{eq:oneZ622}\leb(\partial_{\ve}(I \setminus \cl R)\setminus \partial_{\ve}I)
                                                                  &\le&
                                                                        C_{\cR}\leb(\partial_{\ve} I).                                                  
    \end{eqnarray}
  \item \label{oneZRproperties}\(\exists Z \in \cR\) and
    \(Z' \supset Z\) s.t.
    \begin{equation} \label{eq:oneZdiam} \diam Z' \le \eta \epstail;
    \end{equation}
    \begin{equation} \label{eq:oneZmeas} \leb(Z'\setminus Z) \ge
      c_{\cR} \leb(Z');
    \end{equation}
    and for every open set
    \(I\) with \(\diam I \le \epstail\) and \(I \supset Z'\),
    \begin{equation}
      \label{eq:oneZbd}\leb(\partial_{\ve}(I \setminus \cl Z)\setminus \partial_{\ve}I) \le
      C_{\cR}\leb(\partial_{\ve} I).
    \end{equation}
    Moreover, there exists a finite collection of partition elements
    \(\cP_{Z}=\{O_{k}\}_{k=1}^{K}\) s.t.
    \(\forall k \in \{1, \dots, K\}\), \(O_{k} \in \cP\),
    \(O_{k} \subset Z\), \(TO_{k} \supset Z'\). 
  \end{enumerate}
\end{hyp}

Under assumptions \ref{hyp1}-\ref{hyp4}, \ref{hyp8} we prove the
following.
\begin{prop} \label{prop-fullMone} There exists a refinement \(\cP''\)
  of the partition \(\cP\) of \(T:X \circlearrowleft\) into open sets
  (mod \(0\)), a set \(Z\) (the one from \ref{hyp8}) consisting of
  elements of \(\cP''\) and a map \(\tilde \tau: Z \to\bZ^+\) constant
  on elements of \(\cP''\) such that
  \begin{enumerate}[label=(\alph*)]
  \item \label{prop-fullM-fullbr-one} The map
    \(\tilde G= T^{\tilde\tau}:Z \circlearrowleft\) is a full-branched
    Gibbs-Markov map.
  \item
    \label{prop-fullM-gcd-one}\(\leb(\{\tilde \tau =1\} \cap O_{k})
    >0\) for every \(1 \le k \le K\).
  \item
    \label{prop-fullM-tails-one}\(\leb(\tilde \tau>n)\le const \cdot
    \tilde \kappa^n\) for some \(\tilde \kappa \in (0,1)\).
  \end{enumerate}
\end{prop}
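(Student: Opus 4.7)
The plan is to mirror the proof of \Cref{prop-fullM}, but to replace its $\gcd$-based construction of the initial return times $\{n_j\}$ by a direct one-step stopping on each $O_k \in \cP_Z$, using the geometric data supplied by \ref{hyp8}; the rest of the construction (exponential tails and the passage to the first return map) is essentially the same.

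First I would start with the singleton family $\cG_0 = \{(Z, \Id_Z/\leb(Z))\}$ of weight $w_0 = \leb(Z)$; by \cref{oneZRitemone} of \ref{hyp8}, $\cG_0$ is $(\modreg, \epstail, B)$-proper for some $B>0$. Forming $\cG_1 = \cT \cG_0$, I invoke the divisibility assumption \ref{hyp4} with the ``must-not-split'' set $V_* = Z'$: this is legitimate because for each $k$, $Z' \subset TO_k = T(Z \cap O_k)$ and $\diam Z' \le \eta \epstail$ by \eqref{eq:oneZdiam}. Thus the chopping of $TO_k$ produces a single piece $U_{\ell^*(k)} \supset Z'$. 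I then set
\[
A_k := O_k \cap T^{-1}Z, \qquad \tau|_{A_k} := 1, \quad k = 1, \dots, K.
\]
Since $T|_{O_k}$ is a non-singular homeomorphism onto $TO_k \supset Z$, $\leb(A_k) > 0$ and $TA_k = Z$.

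Next I form the remainder family $\hat \cG_1$ from $\cG_1$ by replacing each pair $(U_{\ell^*(k)}, \rho)$ of weight $w$ by $\bigl(U_{\ell^*(k)} \setminus \cl Z,\ \rho \Id_{U_{\ell^*(k)} \setminus \cl Z}/\!\int_{U_{\ell^*(k)} \setminus Z}\rho\bigr)$ of weight $w \int_{U_{\ell^*(k)} \setminus Z}\rho$, leaving all other pairs unchanged. The argument of \Cref{lem-remainderfamZ} applies with \eqref{eq:Zbd} replaced by \eqref{eq:oneZbd}, which is available because each modified domain $U_{\ell^*(k)}$ contains $Z'$ and has diameter $\le \epstail$. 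Combined with \eqref{eq:oneZmeas} and the uniform bound $\leb(U_{\ell^*(k)}) \le \Cball(\epstail)$, which give $\leb(U_{\ell^*(k)} \setminus Z)/\leb(U_{\ell^*(k)}) \ge c_{\cR} \leb(Z')/\Cball(\epstail)$, this shows that $\hat \cG_1$ is $(\modreg, \epstail, B')$-proper for some finite $B'$.

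From this point the construction is identical to that of \Cref{prop-fullM}: I iterate $\hat \cG_1$ for $n_{rec}(B')$ additional steps to recover a $(\modreg, \epstail, \proper)$-proper family (\Cref{rem:Brecov}), then run the inductive ``stop on elements of $\cR$ / iterate $n_{rec}(\bar C_{\cR}\proper)$ steps to recover'' loop of \Cref{prop-GM}, with the fixed-fraction weight removal supplied by \Cref{lem-fixedratio} producing exponential tails for $\tau$ on $\hat Z := Z \setminus \bigcup_k A_k$. In parallel, \Cref{prop-GM} defines $\tau$ on the other elements of $\cR$, so that $G := T^\tau$ is a Gibbs-Markov map on $\uspace$ whose finitely many images lie in $\cR$ and include $Z$. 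Setting $\vs$ equal to the first return time of $G$ to $Z$ and $\tilde\tau = \sum_{\ell = 0}^{\vs - 1} \tau \circ G^\ell$, the map $\tilde G = T^{\tilde\tau}: Z \circlearrowleft$ is the first-return map to $Z$, hence full-branched Gibbs-Markov (giving \cref{prop-fullM-fullbr-one}); exponential tails of $\vs$ (standard for Markov maps with finitely many states) combined with those of $\tau$ yield \cref{prop-fullM-tails-one}; finally, on $A_k \subset O_k$ we have $\tau = 1$ with $TA_k = Z$, forcing $\vs|_{A_k} = 1$ and therefore $\tilde\tau|_{A_k} = 1$, so $\leb(\{\tilde\tau = 1\} \cap O_k) \ge \leb(A_k) > 0$, which is \cref{prop-fullM-gcd-one}.

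The main delicate point is the second paragraph: the one-step stopping is carried out \emph{before} $\cG_0$ has had time to equilibrate to a $\proper$-proper family (one cannot afford to wait, since the whole purpose is to achieve $\tilde\tau = 1$), and the task is to show that the one-step remainder $\hat \cG_1$ is still proper with finite constant $B' < \infty$. This hinges on using the full strength of \eqref{eq:oneZbd} and \eqref{eq:oneZmeas} --- in particular, on the fact that \eqref{eq:oneZbd} applies to \emph{any} open $I \supset Z'$ of small diameter, not merely to $\delta_0$-regular sets, and that \eqref{eq:oneZmeas} gives a lower bound on the remaining mass independent of $I$. Once $B' < \infty$ is secured, everything downstream is a direct adaptation of the arguments already used in \Cref{prop-GM} and \Cref{prop-fullM}.
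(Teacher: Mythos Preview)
Your proposal is correct and follows essentially the same approach as the paper's proof: start from $\cG_0=\{(Z,\Id_Z/\leb(Z))\}$, apply one iterate with artificial chopping avoiding $Z'$, stop on $A_k=O_k\cap T^{-1}Z$ with $\tau=1$, show the remainder $\hat\cG_1$ is $(\modreg,\epstail,B')$--proper via \eqref{eq:oneZbd} and \eqref{eq:oneZmeas}, then recover and run the standard loop of \Cref{prop-GM}, and finally pass to the first return of $G=T^\tau$ to $Z$. Your treatment of the delicate point (that $\hat\cG_1$ is proper despite no prior equilibration) is in fact more explicit than the paper's, which simply appeals to ``the ideas of the proof of \Cref{lem-remainderfam} and \cref{oneZRproperties} of \ref{hyp8}''.
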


\begin{proof}
  Let \(Z \subset \uspace\) be as in \ref{hyp8}.

  \begin{enumerate}[label=(\arabic*)]
  \item Let \(\cG_{0}= \{Z, \Id_{Z}/\leb(Z)\}\) and \(w_{0}=\leb(Z)\).
    \(\cG_{0}\) is an \((\modreg, \epstail, B)\)--proper standard
    family for some \(B>0\).
  \item Let \(\cG_{1}:=\cT\cG_{0}\), taking \(V_{*}= \cl Z'\) as the
    set to avoid under artificial chopping. This can be done due to
    \eqref{eq:oneZdiam} and condition \ref{hyp4} on divisibility of
    large sets.

    Define \(\tau = 1\) on \(A_{k}:=T^{-1}Z\cap O_{k} \),
    \(\forall k \in \{1, \dots, K\}\). Note that \(TA_{k}=Z\) since
    \(T(O_{k})\supset Z\). Let \(\hat \cG_{1}\) be the family obtained
    from \(\cG_{1}\) by replacing each \((I, \rho)\) of weight \(w\),
    containing \(Z'\) in its domain by
    \((I\setminus \cl Z, \rho \Id_{I\setminus \cl Z}/\int_{I \setminus
      Z} \rho)\) of weight \(w\int_{I\setminus Z}\rho\). Then it can
    be shown, following the ideas of the proof of
    \Cref{lem-remainderfam} and \cref{oneZRproperties} of \ref{hyp8},
    that \(\hat \cG_{1}\) is an \((\modreg, \epstail, B')\)--proper
    standard family for some \(B'>0\).  Now we proceed as before. Let
    \[\hat Z := Z \setminus
      \bigcup_{k=1}^{K} A_{k}.\]

  \item \label{oneZrepeatone} Let
    \(\cG_{2} = \cT^{n_{rec}(B')}\hat \cG_{1}\). Then \(\cG_{2}\) is
    an \((\modreg, \epstail, \proper)\)--proper standard family. By
    \cref{oneZRitemtwothree} of \ref{hyp8}, every standard pair in
    \(\cG_{2}\) whose domain is \(\delta_{0}\)-regular contains an
    element \(R_{k}\), \(1 \le k \le N\), from the collection
    \(\cR\). ``Stop'' such standard pairs of \(\cG_{2}\) on
    \(R_{k} \in \cR\). By stopping we mean going back to \(Z\) and
    defining the return time \(\tau = 1 +n_{rec}(B')\) on the subset
    of \(\hat Z\) that maps onto \(R_{k}\) under
    \(T^{1+n_{rec}(B')}\).  By \Cref{lem-fixedratio}, the ratio of the
    removed weight from \(\cG_{2}\) to the weight of the remainder
    family (remainder as in \Cref{lem-remainderfam}), which we denote
    by \(\hat \cG_{2}\), is at least some positive constant \(t\)
    given by \eqref{eq:t}. Note that since the total weight is
    preserved under iteration, this corresponds to defining \(\tau\)
    on a subset \(A \subset \hat Z\) such that
    \(\leb(A) \ge t \cdot \leb(\hat Z \setminus A)\). Also, by
    \Cref{lem-remainderfam}, \(\hat\cG_{2}\) is an
    \((\modreg, \epstail, \bar C_{\cR} \proper)\)--standard family.

  \item \label{oneZrepeattwo}
    \(\cG_{3}:=\cT^{n_{rec}(\bar C_{\cR}\proper)}\hat \cG_{2}\) is an
    \((\modreg, \epstail, \proper)\)--proper standard family so we can
    apply step \ref{oneZrepeatone} to it.
  \item Repeat the steps \ref{oneZrepeatone}, \ref{oneZrepeattwo}
    \(\rightarrow\) \ref{oneZrepeatone}, \ref{oneZrepeattwo}
    \(\rightarrow \cdots\), incrementing the indices accordingly
    during the process. This procedure defines \(\tau\) on \(\hat Z\)
    up to a measure zero set of points.
  \end{enumerate}

  The above steps described how to define \(\tau\) on \(Z\) so that
  \(\leb(\{\tau=1\} \cap O_{k})>0\). We have also explained how to
  define \(\tau\) on the rest of the elements of \(\cR\) in
  \Cref{sec:inducingscheme1} (Recall that condition \ref{hyp8} is
  stronger than \ref{hyp6}). The rest of the proof is the same as the
  proof of \Cref{prop-fullM}.
\end{proof}

In the remaining sections we provide specific examples and justify
that our assumptions can be checked for various dynamical systems.

\section{Example 1: A nonlinear W-map}
\label{sec:W}

The following ``W-map'' example\footnote{The name refers to a class of maps
  whose graph looks like a ``W''. Their main feature is the existence
  of a periodic turning point which may lead to singular behaviour
  under perturbation. See \cite{EM12} for instability and
  \cite{EG13} for stability of families of such maps.} is taken from
\cite[Example 2]{GZhBP12}. The map \(T:\uspace=(0,1)\circlearrowleft\)
is given by
\[
  T=\begin{cases}
    T_{1}:=1-(40/9)x, & 0\le x < 9/40,\\
    T_{2}:=2(x-9/40), & 9/40 \le x < 9/20,\\
    T_{3}:=-4(x-9/16), & 9/20 \le x < 9/16,\\
    T_{4}:=x^{2}+(81/112)x-81/112, & 9/16 \le x <1.
  \end{cases}
\]
The graph of this map is depicted in \Cref{fig-wmap}.

In \cite{GZhBP12} explicit estimates on the rate of decay of
correlations were obtained for this map using Hilbert metric
contraction as in \cite{Liv1}. We use the same example in order to make it possible to compare
the explicit values obtained in \cite{GZhBP12} to those obtained by
our method. As we will see, in this case our constants of mixing will be much worse than
those in \cite{GZhBP12} and we
will explain the reason at the end of this section.

In this example, \(\uspace=(0,1)\) with its usual metric
\(\metr(x,y)=\abs{x-y}\) and Lebesgue measure \(\leb\). Also one can take \(\epstailone=1\). Let us denote by \(\cH=\{h_{j}\}_{j=1}^{4}\) the inverse branches of
the map \(T\) from left to right. We have \(O_{h_{1}}=(0,9/40)\),
\(O_{h_{2}}=(9/40,9/20)\), \(O_{h_{3}}=(9/20,9/16)\),
\(O_{h_{4}}=(9/16,1)\).

In the following subsections we check conditions \ref{hyp1}-\ref{hyp5}.

\begin{figure}[h]
\centering
\includegraphics[width = 0.8\columnwidth, height = 0.8\columnwidth,
keepaspectratio]{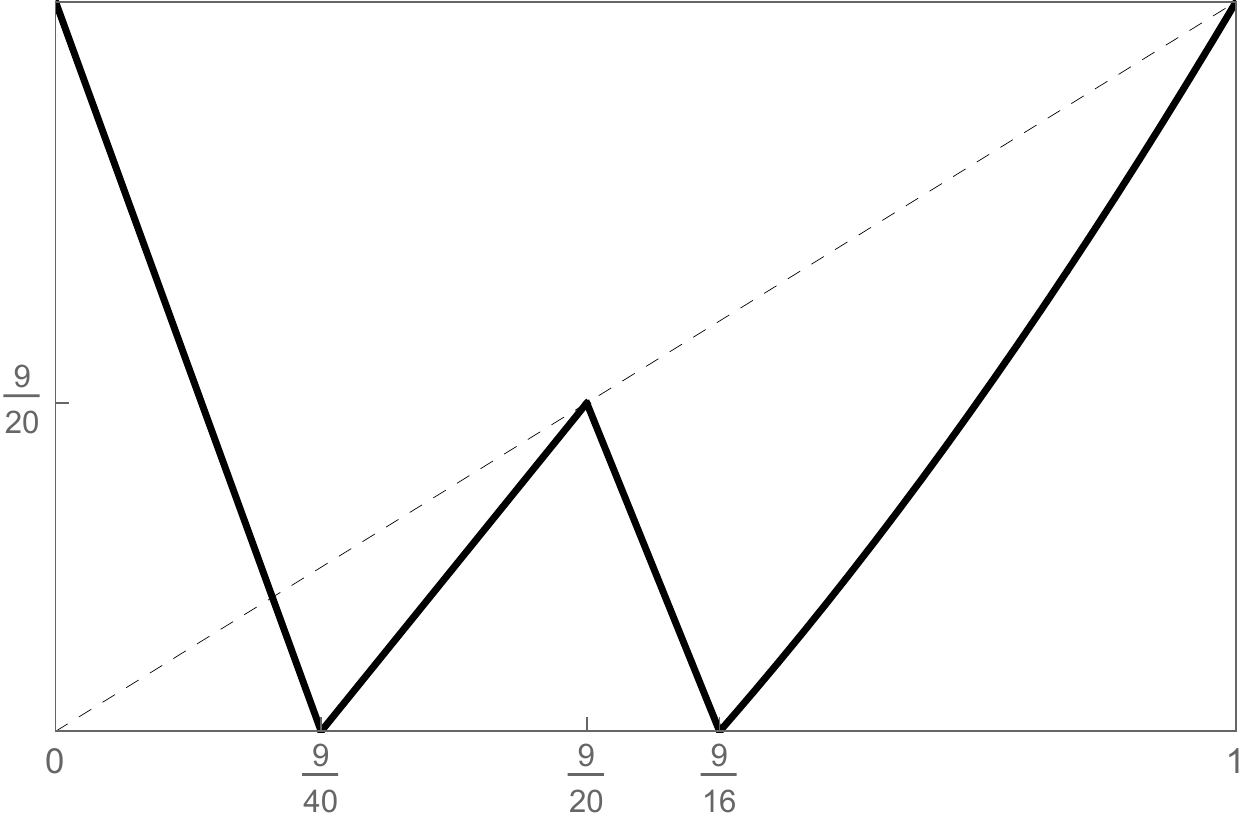}
\caption{The graph of the nonlinear W-map \(T\).}
\label{fig-wmap}
\end{figure}

\subsection{Uniform expansion} It is easy to check that with \(\epstailtwo=1\), the following
bounds on the contraction
factors hold for the inverse branches of the map \(T\).
\[
  \expan_{h_{1}}=9/40, \expan_{h_{2}}=1/2, \expan_{h_{3}}=1/4,
  \expan_{h_{4}}=112/207, \expan = \max_{1\le j\le
    4}\{\expan_{h_{j}}\} = 112/207.
  \]

  \subsection{Bounded distortion} Choose \(\epstailthree =1\). Since the first three branches are
  linear they do not influence the choice of distortion constants. As
  for the last branch, it suffices to show that \(\exists \tilde
  \dist>0\) s.t. \(\abs{\ln Jh (x) - \ln
  Jh(y)} \le \tilde \dist \abs{x-y}\), \(\forall x, y \in (0,1)\). By the
mean value theorem,
\[
  \begin{split}
  \abs{\ln Jh (x) - \ln
  Jh(y)} &\le \sup \abs{\frac{(Jh)'}{Jh}} \le \sup
\abs{\frac{T''}{(T')^{2}}\circ h} \\
&\le \sup
\frac{2}{(2h_{4}(x)+81/112)^{2}} \\
&\le \frac{2}{(2(9/16)+81/112)^{2}} =\frac{25088}{42849}.
  \end{split}
\]
Therefore, the bounded distortion condition is satisfied
  with
  \[
    \distexp = 1, \tilde \dist = \frac{25088}{42849}, \epstailthree=1. 
  \]
  It follows that \(D = 25088/19665\) and
  \(\dist/(1-\expan^{\distexp}) = 25088/9025\). So we fix
  \[
    \modreg = 25089/9025.
  \]

  \begin{rem} (Restriction to intervals)
 In our one-dimensional example, in which all \(O_{h}\) are open intervals, if we can verify 
 hypotheses \ref{hyp3}, \ref{hyp4} assuming that \(I\), \(V\) and \(U_{\ell}\), \(\forall
 \ell \in \cU\) are open intervals (a stronger
   property than being just open sets), then we get a theorem about
   proper standard pairs whose domains are open intervals. The main
   reason is that since \(O_{h}\), \(\forall h \in \cH^{\ntail}\) is an open interval, then so are \(I \cap
   O_{h}\), \(T^{\ntail}(I \cap
   O_{h})\) and \(T^{\ntail}(I \cap
   O_{h}) \cap U_{\ell}\). Therefore we can repeat all the proofs
   assuming that standard pairs are supported on open intervals.
 \end{rem}

  In
   the following we check hypotheses \ref{hyp3}, \ref{hyp4} for \(I, V\)
   being intervals and construct
   \(U_{\ell}\) to be intervals \(\forall \ell\). Accordingly our theorem will
   also be restricted to proper standard pairs whose domains are open
   intervals.
   
  \subsection{Dynamical complexity}
Fix \(\ntail=1\). If we choose \(\epstailfour =1/4\), then any open interval \(I\) of diameter
\(\le \epstailfour\) contains at most one point of discontinuity. By
points of discontinuity we mean \(\{9/40,9/20,9/16\}\). If \(I\)
contains no discontinuities, then the complexity condition holds
because the left-hand side of \eqref{eq:dyncomplexity} is
zero. Suppose \(I\) contains a discontinuity. Fix a
branch \(h \in \cH\) s.t. \(\leb(I \cap O_{h})>0\). \(T(I \cap O_{h})\) is an interval and
\(\partial_{\ve}T(I \cap O_{h})\) consists of at most two intervals of length
\(\ve\) near the two ends of the interval \(T(I \cap
O_{h})\). However, if this interval is of the form \((0, p)\) or
\((p,1)\) for some \(p \in (0,1)\), i.e. if one of its endpoints is
\(0\) or \(1\), then \(\partial_{\ve}T(I \cap
O_{h})\) will consist of at most one subinterval of length \(\ve\) near
\(p\). It follows that \(h(\partial_{\ve}T(I \cap
O_{h}))\setminus \partial_{\expan \ve}I\) is a single subinterval of length
\(\le\expan_{h}\ve\) near the discontinuity point that cuts \(I\). A
moment of consideration of possible locations of the interval \(I\) in
\((0,1)\) reveals that the only discontinuity point that contributes
to the complexity expression,  i.e. the expression on the left-hand side of
\eqref{eq:dyncomplexity}, is the discontinuity point at \(9/20\)  and the complexity
expression
is bounded by
\[
  \le  \frac{\expan_{h_{2}}\ve + \expan_{h_{3}}\ve}{2\expan\ve}=\frac{1/2 + 1/4}{2\expan} = 621/896,
  \]
  where we have used \(\leb(\partial_{\expan\ve}I)\ge 2\expan\ve\). Note that, since \(I\) is an interval,
  \(\leb(\partial_{\ve}I)=\min\{\diam(I), 2\expan \ve\}\); but if
  \(\diam(I)<2\expan\ve\), then \(\partial_{\expan \ve}I=I\) and the numerator of the complexity
  expression is \(0\) hence \eqref{eq:dyncomplexity} is trivially satisfied.
  
  So we choose \(\sigtail=621/896\) (which is indeed strictly less
  than \(\expan^{-1}-1=95/112\)) and then choose \(\epstail\)
so that
  \[
    \sigtail < \Caa (\expan^{-1}-1).
  \]
  In fact we choose \(\epstail\) so that \(\Caa (\expan^{-1}-1)\)
  equals the average of \(\expan^{-1}-1\) and \(\sigtail\):
  \[
    \epstail = (9025/25089) \ln(1520/1381) \approx 0.0345.
   \]
   \subsection{Divisibility of large sets}
   Suppose \(V\) is an arbitrary open interval  with \(\diam V \ge
   \epstail\). Suppose \(V_{*} \subset V\) is a set with diam \(V_{*}<
\epstail/3\) (so we are taking \(\eta=1/3\)). First choose \(U_{\ell_{*}}\) to be an open interval of
\(\diam U_{\ell_{*}} \le \epstail/3\) containing \(V_{*}\). If on the left
or right of \(U_{\ell_{*}}\) there is a piece of \(V\) left with \(\diam \le
\epstail/3\), then join it to \(U_{\ell_{*}}\). Now there remains at most
two pieces of \(V\) of \(\diam> \epstail/3\) to cut into pieces of
\(\diam \le \epstail\). Simply cut the remainder into equal pieces of
length \(\epstail/3\) and again if there are pieces left with \(\diam \le
\epstail/3\) join them to the adjacent intervals. With this
construction each of the
intervals \(U_{\ell}\) satisfies \(\epstail/3 \le \diam U_{\ell}
\le \epstail\) and one of them contains \(V_{*}\). Now the result follows from \Cref{oneDdivisibility} with
\(C_{\epstail} = e^{D\epstail^{\distexp}}6\epstail^{-1} \approx 181.75\).

\subsection{Positively linked}

Let us first prove an auxiliary lemma that is useful in estimating the
length of the largest component of the image of an interval after it
is cut by discontinuities and each piece is expanded. In the following
\(c\) can be thought of as the length of the interval \(I\) which is
cut into \(n\) pieces of length \(\alpha_{1}, \dots, \alpha_{n}\) and
then each of the pieces is expanded by, at least, a factor of
\(z_{1}, \dots, z_{n}\), respectively.
\begin{lem} \label{auxlem} Let \(\cN \subset \bN\) and
  \(\{z_{j}\}_{j\in \cN}\) be such that \(z_{j}>0\),
  \(\forall j \in \cN\), and \(\sum_{j \in \cN} z_{j}^{-1}<
  \infty\). Then
  \[ \min_{\stackrel{\{\alpha_{j}\}_{j \in \cN}, \alpha_{j}\ge 0}
      {\sum_{j\in\cN}\alpha_{j} = c}} \max_{j \in \cN}
    \{z_{j}\alpha_{j}\} \ge \frac{c}{\sum_{j \in \cN} z_{j}^{-1}}.\]
\end{lem}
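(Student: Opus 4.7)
The plan is to observe that this is a textbook min–max bound that follows directly from the constraint by a single summation, and the inequality is tight with equality attained when the products $z_j \alpha_j$ are equalized across $j$.

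More precisely, I would fix an arbitrary admissible choice $\{\alpha_j\}_{j \in \cN}$ with $\alpha_j \geq 0$ and $\sum_{j \in \cN} \alpha_j = c$, and set $M := \max_{j \in \cN} \{z_j \alpha_j\}$. By the definition of $M$ and positivity of $z_j$, we have $\alpha_j \leq M/z_j$ for every $j \in \cN$. Summing this inequality over $j \in \cN$ gives
\[
c \;=\; \sum_{j \in \cN} \alpha_j \;\leq\; M \sum_{j \in \cN} z_j^{-1},
\]
which (since $\sum_{j \in \cN} z_j^{-1} < \infty$ by hypothesis) can be rearranged to $M \geq c / \sum_{j \in \cN} z_j^{-1}$. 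Taking the infimum over admissible $\{\alpha_j\}$ yields the stated bound.

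I do not anticipate any obstacle: there is no subtlety concerning infinite $\cN$ because the summability of $\{z_j^{-1}\}$ is assumed, and the bound $\alpha_j \leq M/z_j$ is term-by-term so summation is unambiguous. For completeness one could also note that the bound is sharp: choosing $\alpha_j := (c / \sum_k z_k^{-1}) \cdot z_j^{-1}$ gives $\sum_j \alpha_j = c$ and $z_j \alpha_j = c/\sum_k z_k^{-1}$ for every $j$, so the inequality is in fact an equality. This sharpness remark is not needed for the application but clarifies the meaning of the lemma.
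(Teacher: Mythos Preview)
Your proof is correct and in fact cleaner than the paper's. The paper argues via the mediant-type inequality
\[
\max\!\left\{\frac{\alpha_{1}}{z_{1}^{-1}},\frac{\alpha_{2}}{z_{2}^{-1}}\right\}\ge\frac{\alpha_{1}+\alpha_{2}}{z_{1}^{-1}+z_{2}^{-1}}
\]
to establish the two-element case and then appeals to induction; strictly speaking this yields the bound only for finite $\cN$, and passing to countably infinite $\cN$ requires an extra limiting step that the paper leaves implicit. Your direct argument, bounding each $\alpha_{j}\le M z_{j}^{-1}$ and summing, handles finite and infinite $\cN$ uniformly in one line and makes the role of the summability hypothesis transparent. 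The sharpness remark is a nice addition. Both routes are elementary; yours is shorter and avoids the induction/limit passage entirely.
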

\begin{proof}
  Simply note that
  \[
    \max\{\frac{\alpha_{1}}{z_{1}^{-1}},
    \frac{\alpha_{2}}{z_{2}^{-2}}\} \ge
    \frac{\alpha_{1}+\alpha_{2}}{z_{1}^{-1}+z_{2}^{-1}}.
  \]
  Taking the minimum on both sides over
  \(\alpha_{1}, \alpha_{2}\geq 0\) s.t. \(\alpha_{1}+\alpha_{2}=c\)
  proves the lemma when \(\cN\) has two elements. The full result
  follows by induction.
\end{proof}

Let
\begin{equation}\label{eq:delmax}
  s_{H} = \max_{j=1,2,3}\{\expan_{h_{j}}+\expan_{h_{j+1}}\} \text{ and
  }  \delta_{max} = \max_{j=1,2,3}\{\leb(O_{h_{j}})+\leb(O_{h_{j+1}})\}.
\end{equation}

  \begin{lem}\label{intervalgrowth}
    For every \(\delta_{1} >0\) there exist
    \(\tilde N_{\delta_{1}} \in \bN \cup \{0\}\) and
    \(\Gamma_{\delta_{1}} >0\) s.t. for every interval \(J\) with
    \(\leb(J) \ge \delta_{1}\), there exist
    \(N \le \tilde N_{\delta_{1}}\) and a subinterval
    \( J_{N}\subset J\) such that

    \begin{enumerate}[label=(\alph*)]
    \item \(J_{N}\) is contained in a partition element of \(T^{N}\);
    \item \(T^{N} J_{N}\) contains a partition element of \(T\);
    \item \((T^{N}|_{ J_{N}})' \le \Gamma_{\delta_{1}}^{-N}\).
    \end{enumerate}
    In fact, \(\Gamma_{\delta_{1}} = 9/40\) and
    \(\tilde N_{\delta_{1}}\) is the least non-negative integer such
    that
    \(\delta_{1}/s_{H}^{\tilde N_{\delta_{1}}} \ge \delta_{max} \).
  \end{lem}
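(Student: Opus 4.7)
The plan is to iteratively construct a nested sequence of subintervals $J=J_{0}\supset J_{1}\supset\dots\supset J_{N}$ where $J_{k}$ is contained in a single element of $\cP^{k}$ and its image $T^{k}J_{k}$ grows by a factor of at least $1/s_{H}$ per step. After finitely many steps the length reaches $\delta_{max}$, at which point the image must contain a full partition element of $T$; the derivative bound then follows immediately from uniform expansion.

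The key geometric observation to record is that any open interval $I\subset\uspace$ with $\leb(I)\geq\delta_{max}$ contains some $O_{h_{k}}$ entirely. Indeed, since $\delta_{max}$ is by definition the maximum length of a union of two consecutive partition elements, if $I$ met only two consecutive elements $O_{h_{j}},O_{h_{j+1}}$, then $\leb(I)\leq\delta_{max}$, with equality only when $I=\mathrm{int}(\cl O_{h_{j}}\cup \cl O_{h_{j+1}})\supset O_{h_{j}}$; otherwise $I$ intersects at least three partition elements and contains the middle one in full.

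For the inductive step, suppose $J_{k}\subset O_{h^{(k)}}\in\cP^{k}$ with $\leb(T^{k}J_{k})\geq\delta_{1}/s_{H}^{k}$. If $T^{k}J_{k}$ already contains a partition element of $T$, we stop. Otherwise $T^{k}J_{k}$ meets at most two consecutive $O_{h_{j}},O_{h_{j+1}}$, giving a decomposition $T^{k}J_{k}=J_{k,1}\sqcup\{p_{j}\}\sqcup J_{k,2}$ with $J_{k,i}\subset O_{h_{j+i-1}}$. Applying \Cref{auxlem} with $\alpha_{i}=\leb(J_{k,i})$ and $z_{i}=\expan_{h_{j+i-1}}^{-1}$, and using that $\leb(TA)\geq\expan_{h}^{-1}\leb(A)$ for every branch $h$ and every $A\subset O_{h}$, we obtain an index $i_{*}$ satisfying
\[
  \leb(TJ_{k,i_{*}})\;\geq\;\frac{\leb(T^{k}J_{k})}{\expan_{h_{j}}+\expan_{h_{j+1}}}\;\geq\;\frac{\leb(T^{k}J_{k})}{s_{H}}.
\]
Setting $J_{k+1}$ to be the preimage of $TJ_{k,i_{*}}$ inside $J_{k}$ under the corresponding branch, by construction $J_{k+1}$ lies in a single element of $\cP^{k+1}$ and $\leb(T^{k+1}J_{k+1})\geq\delta_{1}/s_{H}^{k+1}$.

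By the defining property of $\tilde N_{\delta_{1}}$, after $N=\tilde N_{\delta_{1}}$ iterations the length satisfies $\leb(T^{N}J_{N})\geq\delta_{max}$, so (a) and (b) follow from the construction and the geometric observation respectively. For (c) one checks that $|T'|\leq 40/9$ uniformly on every branch: this is attained on the affine branch $T_{1}$, while on the nonlinear branch $T_{4}$ it amounts to verifying $2x+81/112\leq 40/9$ on $(9/16,1)$. The chain rule then yields $(T^{N}|_{J_{N}})'\leq(40/9)^{N}=\Gamma_{\delta_{1}}^{-N}$. The main (mild) obstacle is the bookkeeping needed to ensure at each stage that the chosen subinterval $J_{k+1}$ nests inside $J_{k}$ and lies in a single element of $\cP^{k+1}$; once this is set up, the argument runs routinely.
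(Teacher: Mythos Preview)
Your proof is correct and follows essentially the same approach as the paper: both iterate \Cref{auxlem} to gain a factor $1/s_{H}$ in image-length per step, stop once the image reaches length $\delta_{max}$ (forcing it to contain a full partition element), and deduce (c) from the uniform bound $|T'|\le 40/9$. Your write-up is slightly more explicit about the geometric stopping criterion and the derivative check on the nonlinear branch, but the argument is identical.
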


\begin{proof}
  Given \(\delta_{1}>0\) let \(\tilde N_{\delta_{1}}\) be the least
  non-negative integer such that
  \(\delta_{1}/s_{H}^{\tilde N_{\delta_{1}}} \ge \delta_{max}\) and
  \(\Gamma_{\delta_{1}} = 9/40\). Suppose \(J\) is an interval with
  \(\leb(J)\ge \delta_{1}\) and it does not contain a partition
  element of \(T\). By \Cref{auxlem}, \(J\) contains a subinterval
  \(J_{1}\), which is contained in a partition element of \(T\), and
  \[
    \leb(TJ_{1}) \geq \frac{\leb(J)}{s_{H}} \ge
    \frac{\delta_{1}}{s_{H}} .
  \]
  If \(\leb(TJ_{1})\) contains a partition element of \(T\), then we
  are done since all three conditions of the lemma are satisfied with
  \(N=1 \le \tilde N_{\delta_{1}}\).

  If \(TJ_{1}\) does not contain a partition element of \(T\), then
  again by \Cref{auxlem}, \(TJ_{1}\) contains an interval
  \(\tilde J_{2}\), which is contained in a partition element of
  \(T\), and
  \[
    \leb(T\tilde J_{2}) \geq \frac{\leb(TJ_{1})}{s_{H}} \ge
    \frac{\delta_{1}}{s_{H}^{2}} .
  \]
  It follows that there exists an interval
  \(J_{2}\subset J_{1}\subset J\), which is contained in a partition
  element of \(T^{2}\), and
  \[
    \leb(T^{2} J_{2}) \ge \frac{\delta_{1}}{s_{H}^{2}} .
  \]
  This process stops when \(T^{N}J_{N}\) has length larger than
  \(\delta_{max}\). By our choice of \(\tilde N_{\delta_{1}}\) this
  happens for some \(N \le \tilde N_{\delta_{1}}\). Since \(T'\) is
  always \(\le (9/40)^{-1}\), we also have
  \((T^{N}|_{ J_{N}})' \le (9/40)^{-N} = \Gamma_{\delta_{1}}^{-N}\).
\end{proof}

Now we are prepared to check the positively linked condition
\ref{hyp5}.  Let \(\delta:=\delta_{0}\) and
\(\delta_{1}:=\delta/3\). Let \(C_{\uspace}=1\). Divide the unit
interval into finitely many subintervals \(\{J\}\) of length
\(\delta/3\) and possibly one last interval of length between
\(\delta/3\) and \(2\delta/3\).  Note that \(\delta_{max}=11/20\),
where \(\delta_{max}\) was defined by \eqref{eq:delmax}. Let
\(\tilde N_{\delta_{1}}\) be as in \Cref{intervalgrowth}. Any interval
\(J\) belonging to the above finite colleciton, by
\Cref{intervalgrowth}, has a further subinterval \( \hat J_{N}\),
where \(N \le \tilde N_{\delta_{1}}\), that is contained in a
partition element of \(T^{N}\) and, under \(T^{N}\), covers one full
partition element \(O_{h}\) of \(T\). Since for every \(h \in \cH\),
\(TO_{h} \supset (0,1/2)\), there is a collection of subintervals
\(\{J_{N} \subset J\}\) each of whose elements is contained in a
partition element of \(T^{N+1}\) and, under \(T^{N+1}\), cover
\((0,1/2)\). Note that \(N+1\le \tilde N_{\delta_{1}}+1\), so for
condition \ref{hyp5} we can take
\[
  N_{\delta} = \tilde N_{\delta_{1}}+1 = \max \left\{ \left\lceil
      \frac{\ln \frac{\delta_{1}}{\delta_{max}}}{\ln s_{H}}
    \right\rceil, 0 \right\}+1= 57.
\]
The subintervals \(\{J_{N}\}\) constitute the collection \(\cQ_{N}\)
and we take \(\ovlregion = (0, \epstail/3) \subset (0,1/2)\). By
\Cref{goodovlballs}, \(\ovlregion\) is a \(C_{\uspace}\)-good overlap
set with \(C_{\uspace}=1\).

\begin{itemize}
\item The \(\delta\)-density condition is satisfied because every
  \(\delta\)-regular set \(I\) contains an open interval of length
  \(\delta\) which in turn contains at least one interval \(J\) of
  length between \(\delta/3\) and \(2\delta/3\). The interval \(J\) in
  turn contains an element \(J_{N}\) of \(\cQ_{N}\), by construction.
\item For every \(Q, \tilde Q \in \cQ_{N}\),
  \(T^{N}Q \cap T^{N}\tilde Q\) contains the interval
  \(\ovlregion = (0, \epstail/3)\), by construction and clearly
  \(\leb(\ovlregion)=\epstail/3\). Recall that since \(\ntail=1\),
  \(M=1\).
\item For every \(Q \in \cQ_{N}\), \(N \le N_{\delta}\) and
  \(h \in \cH^{N}\) with \(Q \subset O_{h}\) we have
  \[ \inf_{T^{N}Q} Jh \ge \inf_{x \in O_{h}} 1/(T^{N})'(x) \ge
    (9/40)^{N}\ge (9/40)^{N_{\delta}}
  \]
\end{itemize}

Using the quantities above we get \(1-\gamma_{2} \approx 10^{-41}\),
which leads to a \(1/2\)--mixing time of \(t_{*} \approx 10^{41} \)
for \((\modreg, \epstail, \proper)\)--proper standard pairs. This
mixing time depends most significantly on the value of the lower bound
on \(\Gamma_{N}\ge (9/40)^{N_{\delta}} \). If by numerical simulation,
or by considering higher iterates of the map we find out that
\(N_{\delta}= 20\) suffices, then using this value gives a
\(1/2\)--mixing time of \(\approx 10^{17}\).

\begin{rem}[Comparison of constants]

  Let us now briefly comment on the result that one obtains by using
  Hilbert metric contraction as done in \cite[Theorem 4]{GZhBP12}. In
  \cite[Theorem 4]{GZhBP12}, the significant factor in the bound on
  correlations is \(\approx (1-10^{-8})^{n}\). This leads to a
  \(1/2\)--mixing time of roughly \(10^{8}\), which is significantly
  better than \(10^{40}\) or even \(10^{17}\). The main reason for
  this is the additional information on the global regularity of
  functions under iterations of the transfer operator \(\sL\). Indeed,
  \cite{GZhBP12} uses the facts that the transfer operator of this
  dynamical system preserves the space of functions of bounded
  variation (BV) and that the Lasota-Yorke inequality holds in this
  space. Using BV and Lasota-Yorke inequality one can show that the
  iterations of densities of bounded variation under \(\sL\) have
  uniformly bounded variation. This in turn implies that they have a
  uniform lower bound on some interval hence by the topological
  exactness (and uniform bounds on \(T'\)) they have a uniform lower
  bound on the whole space. Then the contraction in the Hilbert metric
  is used to find the rate of decay of correlations. \textit{However},
  one could just as well use the coupling approach of this paper to
  couple densities that overlap on the whole space and have a uniform
  lower bound. So it is \textit{not} the Hilbert metric contraction
  that improves the estimate, but the information on the global
  regularity of iterates of densities under the transfer
  operator. Note that the specific notion of regularity is important:
  a BV function in two dimensions need not contain an open set in its
  support (for a simple example see \cite{GB92}). In this paper we are
  using coupling without information on global regularity, which makes
  our approach more flexible. However, as we pointed out, one can use
  coupling + global regularity information and get the same results as
  using Hilbert metric contraction + global regularity information. At
  least in the piecewise expanding setting Hilbert-metric contraction
  does not seem to have an advantage over coupling. In a much narrower
  setting this observation (equivalence of coupling and Hilbert metric
  contraction) was already made in \cite{Zwe}.
\end{rem}

\section{Example 2: A non-Markov map of \(\bR\)}
\label{sec:mapR}
In this section we verify our assumptions for a piecewise expanding
map of \(\uspace = \bR^{+}=(0,\infty)\), where \(\metr\) is the usual
metric and the underlying measure \(\leb\) is the
Lebesgue measure. Take \(\epstailone = \infty\). Fix \(t=0.1\). The map \(T:(0,\infty)\circlearrowleft\) is defined
on a countable partition \(\cP = \{O_{1}, O_{2},\dots\}\), where 
\[
    O_{2k-1} = (k-1, k-t) \text{ and }    O_{2k} = (k-t, k), \ \forall k \in \bN.
\]
\(T\) is defined by
\[
  T(x) = \begin{cases}
    (10+2^{-k}) (x- k+1), & x \in O_{2k-1}, k \in \bN,\\
    \frac{1}{k-x}, & x \in  O_{2k}, k \in \bN.
  \end{cases}
\]
Note that \(T\) is piecewise increasing and has infinitely many
different images: \(\forall k \in \bN\) the images are
\[
  \begin{split}
    TO_{2k-1} &=(T(k-1), T(k-t))= (0,(10+2^{-k})(1-t))\\
    T O_{2k} &= (T(k-t), T(k)) = (1/t, \infty).
  \end{split}
  \]
Also note that \(T\) is \textit{not} surjective. In particular no
point maps into the interval \((9.45, 10)\).

\begin{figure}[h]
\centering
\includegraphics[width = 0.8\columnwidth, height = 0.8\columnwidth,
keepaspectratio]{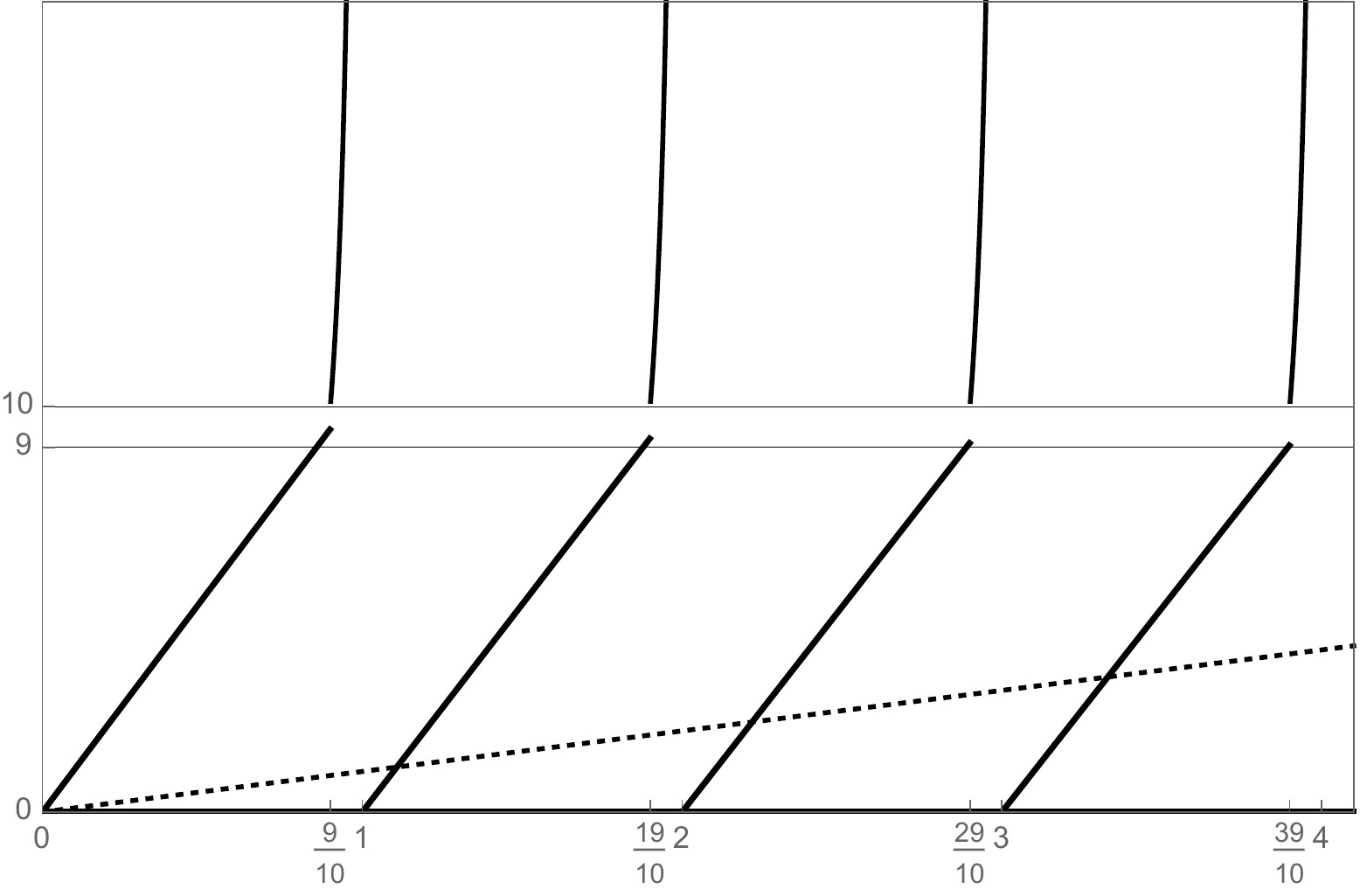}
\caption{The graph of the map \(T\) from Example 2.}
\label{fig-Rmap}
\end{figure}

\subsection{Uniform expansion}
Take \(\epstailtwo=\infty\). In terms of inverse branches we have
\[
    h_{2k-1}(x)=\frac{x}{10+2^{-k}} +k-1 \text{ and }
    h_{2k}(x) = -\frac{1}{x}+k.
\]
So
\[
    h_{2k-1}'(x)=\frac{1}{10+2^{-k}} < \frac{1}{10} \text{ and }
    h_{2k}'(x) = \frac{1}{x^{2}} < t^{2}.
\]
It follows that \(\expan = 1/10 \ge \expan_{h_{2k-1}} \) and
\(\expan_{h_{2k}} \le t^{2}\).

\subsection{Bounded distortion}
Take \(\epstailthree=\infty\). The odd branches are linear, we check distortion for the even
branches. Suppose \(x,y \in TO_{2k} = (1/t, \infty)\), then 
\[
  \abs{\ln h_{2k}'(x) - \ln h_{2k}'(y)} = 2 \abs{\ln x - \ln y} \le 2
  t \abs{x-y},
\]
where in the last inequality we have used the mean value inequality
and \(x, y > 1/t\).

We conclude that the bounded distortion condition is satisfied with
\(\tilde \dist = 2t\) and \(\distexp = 1\). So \(D = 20t/9\) and we
can take \(\modreg > 200 t/81\). For our choice of
\(t=0.1\), we take \(\modreg=21/81>20/81\).

\subsection{Dynamical complexity}
Choose \(\ntail = 1\) and \(\epstailfour = 1/2\). Any open interval
\(I\) of \(\diam I \le \epstailfour\) will intersect at most three
adjacent partition elements of the form \(O_{2k-1}, O_{2k}, O_{2k+1}\)
for some \(k\). This is the worst case for the dynamical complexity condition.  In this case
\(I\) is cut into three pieces at the two cut-points: \(k-t\) and
\(k\) . Considering the \(\ve\)-boundary of the image of the pieces
and pulling them back to \(I\), we get the following bound on the
numerator of the complexity expression (left-hand side of
\eqref{eq:dyncomplexity})
\[\le \expan_{h_{2k-1}}\ve +
  \expan_{h_{2k}}\ve + \expan_{h_{2k+1}}\ve .\] Note that the left
part of the cut point at \(k\) does not contribute to the complexity
expression because its image has empty \(\ve\)--boundary. So the
complexity expression is bounded by
\[\frac{\expan_{h_{2k-1}}\ve + \expan_{h_{2k}}\ve +
    \expan_{h_{2k+1}}\ve}{2\expan \ve} \le
  \frac{1/10+ t^{2}+1/10}{2/10}=1+5 t^{2} =\frac{21}{20}.
\]
So we choose \(\sigtail =21/20\), which is indeed
strictly less than \( \expan^{-1}-1 = 9\). We also have
\(\sigtail < 9e^{-\modreg \epstail^{\alpha}} = 9e^{-1/10} \approx
8.14\). So we can take \(\epstail = \epstailfour = 1/2\).

\subsection{Divisibility of large sets}
Just as in the previous example, the result follows from \Cref{oneDdivisibility} with
\(C_{\epstail} = e^{D\epstail^{\distexp}}6\epstail^{-1} = 12
e^{1/10}\approx 13.26.\)

\subsection{Positively linked}
Now we need to find a collection of \(\delta=\delta_{0}\)-dense sets that
interact in a finite time \(N \le N_{\delta}\) that is uniform for every pair of such
intervals and the overlaps have uniform lower bound \(\ovl\) on their
measure. \(\delta_{0}\) can be calculated using the formulas in
\Cref{sec:setting} and one gets \(\delta_{0}\approx 0.0134\).

Let
\[
  s_{H}:=\frac{1}{10}+t^{2} =0.11.
  \]
\begin{lem}\label{intervalgrowthunbdd}
For every \(\delta_{1}>0\) there exist \(
\tilde N_{\delta_{1}} \in \bN \cup \{0\}\) and \(\Gamma_{\delta_{1}}
>0\) s.t. for every interval \(J\) with \(
\leb(J) \ge \delta_{1}\),
there exists \(N \le N_{\delta_{1}}\) and a subinterval \(J_{N}\subset
J\) such that
\begin{enumerate}[label=(\alph*)]
\item \(J_{N}\) is contained in a partition element of \(T^{N}\).
\item \(T^{N} J_{N}\) contains a partition element of \(T\).
\item \((T^{N}|_{J_{N}})' \le \Gamma_{\delta_{1}}^{-N}\).
\end{enumerate}
In fact, we can choose any \(s \in (0,\min\{1-s_{H}, 1/(\delta_{1}\sqrt{10+2^{-1}})\})\) and then take \(\Gamma_{\delta_{1}} = (s\delta_{1})^{-2}\) and  \(\tilde N_{\delta_{1}}\) the least
non-negative integer such that
\[ \frac{\delta_{1}\left(1-s\sum_{j=0}^{\tilde
        N_{\delta_{1}}-1}s_{H}^{j}\right)}{s_{H}^{\tilde N_{\delta_{1}}}}
  \ge 1.
\]
\end{lem}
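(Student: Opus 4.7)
The plan is to mimic the proof of \Cref{intervalgrowth} while adapting it to the two new features present here, namely the infinite partition and the unboundedness of $T'$ on the even partition elements $O_{2k}$. Two preliminary observations organise the argument. First, because the consecutive partition element lengths alternate between $1-t$ and $t$ (both strictly less than $1$), any open interval of length $\ge 1$ in $(0,\infty)$ contains two consecutive cut points and hence contains a partition element of $T$. Second, if an open interval $\tilde J$ does not contain any partition element of $T$, then $\tilde J$ intersects at most two adjacent partition elements (otherwise the middle one would be fully contained), and among these at most one is of the even type.

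I would construct $J_N$ by an inductive growth procedure. Set $J_0 := J$, and at step $k$ write $\tilde J^{(k)} := T^k(J_k)$ and $\ell_k := \leb(\tilde J^{(k)})$. If $\tilde J^{(k)}$ already contains a partition element of $T$, stop and set $N := k$. Otherwise let $k'$ be the unique index (if any) with $\tilde J^{(k)} \cap O_{2k'} \neq \emptyset$, and remove from $\tilde J^{(k)}$ the ``danger zone'' $(k'-s\delta_{1},k')$ on which $T'(x) = 1/(k'-x)^{2}$ exceeds $(s\delta_{1})^{-2} = \Gamma_{\delta_{1}}^{-1}$; call the remainder $\tilde J^{(k),\sharp}$, so that $\leb(\tilde J^{(k),\sharp}) \ge \ell_{k} - s\delta_{1}$. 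The constraint $s < 1/(\delta_{1}\sqrt{10+2^{-1}})$ ensures $\Gamma_{\delta_{1}}^{-1} \ge 10+2^{-k}$ for every $k$, and combined with the danger-zone removal this gives $T' \le \Gamma_{\delta_{1}}^{-1}$ everywhere on $\tilde J^{(k),\sharp}$. I would then apply \Cref{auxlem} with the lower bounds $z_{i} = 1/\expan_{h_{i}}$ on $T'|_{O_{i}}$, whose reciprocals over the (at most two) pieces of $\tilde J^{(k),\sharp}$ sum to at most $s_{H} = 1/10 + t^{2}$, to select a piece $A^{*}$ of maximal $T$-image length, and define $J_{k+1} := (T^{k}|_{J_{k}})^{-1}(A^{*})$. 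This is an open subinterval of $J_{k}$ contained in a partition element of $T^{k+1}$, and it satisfies $\ell_{k+1} \ge (\ell_{k} - s\delta_{1})/s_{H}$.

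Unrolling the recursion from $\ell_{0} \ge \delta_{1}$ yields $\ell_{k} \ge \delta_{1}\bigl(1 - s\sum_{j=0}^{k-1} s_{H}^{j}\bigr)/s_{H}^{k}$, so by the definition of $\tilde N_{\delta_{1}}$ the procedure terminates at some $N \le \tilde N_{\delta_{1}}$ with $\ell_{N} \ge 1$; by the first preliminary observation this proves (a) and (b). The derivative bound (c) follows from the chain rule, since by construction $T'(T^{k}(x)) \le \Gamma_{\delta_{1}}^{-1}$ for every $x \in J_{N}$ and every $0 \le k \le N-1$. The other restriction $s < 1 - s_{H}$ plays the role of keeping $1 - s/(1-s_{H}) > 0$, which ensures $\ell_{k} > 0$ throughout and the iteration never stalls. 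The point to verify most carefully in the write-up is the sub-case in which $\tilde J^{(k)}$ contains an integer $k'$ in its interior: the danger-zone removal then disconnects $\tilde J^{(k),\sharp}$ into two intervals lying in $O_{2k'}$ and $O_{2k'+1}$ respectively, but these still amount to exactly two intervals in two partition elements, so \Cref{auxlem} applies exactly as in the connected case.
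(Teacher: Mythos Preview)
Your proposal is correct and follows essentially the same approach as the paper: excise the one-sided ``danger zone'' $(k'-s\delta_1,k')$ to cap $T'$, apply \Cref{auxlem} to the at most two remaining pieces to obtain the recursion $\ell_{k+1}\ge(\ell_k-s\delta_1)/s_H$, and iterate until the image has length $\ge 1$. Your case analysis is in fact slightly more explicit than the paper's (you spell out why the constraint $s<1/(\delta_1\sqrt{10+2^{-1}})$ forces $(s\delta_1)^{-2}\ge 10+2^{-k}$, and you flag the disconnected sub-case $k'\in\tilde J^{(k)}$), but the substance is identical.
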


\begin{proof}
Suppose \(J\) is an interval with \(
\leb(J) \ge \delta_{1}\). If \(J\) contains a partition element of \(T\) then \(N\)
can be taken to be \(0\) and \(J_{0}\) to be equal to the partition
element contained in \(J\). Also if \(m(J)\geq 1\) then \(J\)
necessarily contains a partition element of \(T\) and the same
argument applies. So let us assume that \(J\) is an arbitrary interval
with \(\delta_{1}\le \leb(J)<1\) and it does not contain a partition element of
\(T\). Set \(s=0.6\). Let \(K_{s}(k)= (k-s\delta_{1}, k)\),
 \(\forall k \in \bN\),
 and let \(K_{s}= \bigcup_{k \in \bN}
 K_{s}(k)\). \(K_{s}\) is the union of one-sided intervals of length
 \(s\delta_{1}\) where \(T'\) is unbounded at the right endpoint
 of each interval. \(J\setminus K_{s}\) is a union of at
most two intervals and 
\[
  \leb(J\setminus K_{s}) \ge \leb(J)-s\delta_{1}.
  \]

If \(J\) does not intersect
\(K_{s}\), then \(J\) can only contain one of the discontinuities at
\(k-t\), where the derivative of \(T\) on its left side is \(\ge 10\)
and on its right side \(\ge 1/t^{2}\). Therefore, by \Cref{auxlem}, \(J\) contains an
interval \(J_{1}\), which is contained in an element of \(\cP\), and 
\[
  \leb(TJ_{1}) \ge \frac{\leb(J)}{1/10 + t^{2}} = \frac{\leb(J)}{s_{H}}.
  \]

If \(J\) intersects \(K_{s}(k)\), then \(J\setminus K_{s}(k) \) is a
union of at most two
intervals which lie on the left and right
of \(K_{s}(k)\), namely \(J \cap (O_{h_{2k}}\setminus K_{s}(k)) \) and
\(J \cap O_{h_{2k+1}}\). The derivatives of \(T\) on these two sets
are \(\ge 10\) and \(\ge 1/t^{2}\). It follows, again by
\Cref{auxlem},  that  \(J\setminus K_{s}(k)\) contains an interval
\(J_{1}\), which is contained in a
  partition element of \(T\),  and 
\[
  \leb(TJ_{1}) \ge \frac{\leb(J\setminus K_{s}(k))}{s_{H}} \ge
  \frac{\leb(J)-s\delta_{1}}{s_{H}}.
  \]

 The maximum derivative \(T'\) on \(J\setminus K_{s}(k)\), and hence
  on \(J_{1}\), is
  \[
    \max\{10+2^{-1},1/(s\delta_{1})^{2}\} =1/(s\delta_{1})^{2} .
  \]

Note that if \(\leb(TJ_{1}) \ge 1\), then \(TJ_{1}\) necessarily
contains \(O_{h}\)
for some \(h \in \cH\). Furthermore \((T|_{J_{1}})' \le 
\frac{1}{(s\delta_{1})^{2}}\). So we have proved our claim with \(N=1\).

If
\(\leb(TJ_{1}) < 1\) and \(TJ_{1}\) does not contain a partition
element of \(T\), we repeat the above argument with
\(J\) replaced by \(TJ_{1}\). In conclusion there exists an interval
\(\tilde J_{2} \subset TJ_{1} \), which is contained in a
  partition element of \(T\), and 
  \[
    \begin{split}
    \leb(T\tilde J_{2})
&\ge  \frac{\leb(TJ_{1})-s\delta_{1}}{s_{H}} \ge
\frac{\frac{\leb(J)-s\delta_{1}}{s_{H}}-s\delta_{1}}{s_{H}} =
\frac{\leb(J)-s\delta_{1}(1+s_{H})}{s_{H}^{2}},\\
(T|_{\tilde J_{2}})' &\le \frac{1}{(s\delta_{1})^{2}}.
    \end{split}
  \]

It follows that there exists an interval \(J_{2} \subset J_{1} \subset J\), which is contained in a
  partition element of \(T^{2}\), such
that 
  \[
    \begin{split}
    \leb(T^{2}J_{2})
&\ge \frac{\leb(J)-s\delta_{1}(1+s_{H})}{s_{H}^{2}},\\
(T^{2}|_{J_{2}})' &\le \sup (T|_{T J_{2}})' \sup (T|_{J_{2}})'  \le
\sup (T|_{\tilde J_{2}})'\sup (T|_{J_{1}})'\le \left(\frac{1}{(s\delta_{1})^{2}}\right)^{2}.
    \end{split}
  \]

  By induction one can show that
  there exists an interval \(J_{N}\subset J\), which is contained in a
  partition element of \(T^{N}\), and 
  \[
    \begin{split}
    \leb(T^{N}J_{N})
&\ge
\frac{\leb(J)-s\delta_{1}\sum_{j=0}^{N-1}s_{H}^{j}}{s_{H}^{N}}
\ge \frac{\delta_{1}\left(1-s\sum_{j=0}^{N-1}s_{H}^{j}\right)}{s_{H}^{N}},\\
(T^{N}|_{J_{N}})' &\le \left(\frac{1}{(s\delta_{1})^{2}}\right)^{N}.
    \end{split}
  \]
  Clearly for some finite \(N\), that does not depend on \(J\) except
  through \(\delta_{1}\), we will have \(\leb(T^{N}J_{N})\ge 1\). We
  assumed \(s<1-s_{H}\) so that \(1-s\sum_{j=0}^{N-1}s_{H}^{j}>0\).
\end{proof}

Now divide \(\uspace = (0, \infty)\) into equal intervals \(\{J\}\) of length
\(\delta_{1}=\delta_{0}/3\).  Note that by \Cref{intervalgrowthunbdd},
each interval \(J\) has a further
subinterval \(\hat J_{N}\), which is contained in a partition element
of \(T^{N}\), and under
\(T^{N}\) covers one full partition element \(O_{h} \in \cP\). At this
point one can calculate \(N\). Recall that \(\delta_{0}\) is calculated by the formulas of
\Cref{sec:setting}. My calculation gives \(N=3\). Since for every \(h \in \cH\), \(T^{2}O_{h} \supset
(0,\infty)\setminus (9,10)\),
there is a collection of subintervals \(\{J_{N} \subset J\}\), each of
which is a subset of an element of \(\cP^{N+2}\) and covers \((0,\infty) \setminus (9,10)\) under
\(T^{N+2}\). Furthermore, it is easy to choose \(J_{N}\) so that
\(T^{N+1}J_{N}\) does not intersect \(K_{s}\). This is needed to
ensure that \((T^{N+2})'\) is bounded on \(J_{N}\).

Let
   \[
    N_{\delta} =N+2=5.
\]
The sub-subintervals \(\{J_{N}\}\)
constitute the collection \(\cQ_{N_{\delta}}\) and we take the overlap
set \(\ovlregion = (0,
\epstail/3) = (0,1/6)\). By \Cref{goodovlballs},  \(\ovlregion\) is a
\(C_{\uspace}\)-good overlap set with \(C_{\uspace}=1\).

\begin{itemize}
\item The \(\delta\)-density condition is satisfied because every
  \(\delta\)-regular set \(I\) contains an open interval of length
  \(2\delta\) which in turn contains at least one interval \(J\) of
  length \(\delta/3\). The interval \(J\) in turn contains
  an element  \(J_{N}\) of \(\cQ_{N_{\delta}}\), by construction.
\item For every \(Q, \tilde Q \in \cQ_{N_{\delta}}\), \(T^{N_{\delta}}Q \cap T^{N_{\delta}}\tilde
  Q\) contains the interval \(\ovlregion = (0, \epstail/3)\) by
  construction, and 
  \(\leb(\ovlregion)=\epstail/3=1/6\).
\item  For every \(Q \in \cQ_{N_{\delta}}\), \(h \in \cH^{N_{\delta}}\)
  with \(Q \subset O_{h}\) we have \[
    \inf_{T^{N_{\delta}}Q} Jh \ge \inf_{x \in
      O_{h}} 1/(T^{N_{\delta}})'(x) \ge (s \delta_{1})^{N_{\delta}}.
  \]
\end{itemize}

Using the quantities above, with \(s = 0.6\), we get
     \[
       1-\gamma_{2} \approx 10^{-29},
       \]
which leads to a \(1/2\)--mixing time of
    \(t_{*}
    \approx  10^{29}
\)
for \((\modreg, \epstail, \proper)\)--proper standard pairs.

\section{Example 3: A 2D example and construction of a
  Tower}
\label{sec:maptwoD}
In this section we describe a two-dimensional piecewise expanding map
with a countably infinite partition that is neither Markov nor conformal. After the description of the map we check
conditions \ref{hyp1}-\ref{hyp4}. Then we proceed to induce the 2D piecewise expanding map, with
exponential tails, to a Gibbs-Markov map with finitely many
images. Statistical properties for our example can then be deduced in
a standard manner \cite{You99}. If one
is interested in an explicit bound on the mixing time of the system, one would
need to also check the condition
\ref{hyp5}. We will not pursue this route in the current example. 

Suppose \(s \in (0,1)\) is sufficiently small (to be determined later) and let \(W := W(s) = (1/5)\sum_{i=1}^{\infty}
i^{-(s+1)} \) be a normalization factor. For every integer \(i \ge 2\) let \(A_{i} =
(0,(5Wi^{s+1})^{-1})\times (0,5^{-i}) \)  and define \(\tilde T_{i}: A_{i} \to \bR^{2}\) by

\[
  \tilde T_{i} (x,y) = \left(
    5W i^{s+1} x(1 + y), 5^{i}y \right)
\]
Let \(\uspace = \bigcup_{ i \ge 2} \tilde T A_{i}\), \(\metr\) the
metric on \(\bR^{2}\) induced by the \(2\)-norm, and \(\leb\) the
Lebesgue measure. Take \(\epstailone=2\). For the current example, if
\(A \subset \uspace \subset \bR^{2}\),
\[
  \partial A := \cl_{\bR^{2}}A
  \cap \cl_{\bR^{2}}(\bR^{2}\setminus A).
  \]This is an important
difference with respect to our previous examples.

For \(i \ge 2\) and \( 1 \le j \le 5^{i}\) let \(\transvec_{i,j}\) be the
vector
\[
  \transvec_{i,j} = \left(1- \frac{1}{5W}\sum_{k=1}^{i}k^{-(s+1)}, j-1\right)
  \]
and define 
\[
  O_{i,j} = A_{i} + \transvec_{i,j}
  \]

  For \(i =1\), and \(1 \le j \le 5\) let
  \[
    O_{1,j} = \left\{ (x, y) \in \uspace :   \frac{(j-1)}{5} <y<\frac{j}{5} \right\}
    \]

    Note that the collection of open sets \(\{O_{i,j}\}\) defined as
    above forms a (mod \(0\))-partition of \(\uspace\) into open sets. Now we define the map \(T:
    \uspace \circlearrowleft\) by defining it on each \(O_{i, j}\).

    If \(i \ge 2\), \(1\le j \le 5^{i}\) and \((x,y) \in O_{i, j}\),
    then \[
      T(x,y) = \tilde T_{i}((x, y) - \transvec_{i,j}).
    \]
    
    If \(i = 1\), \(1 \le j \le 5\) and \((x,y) \in O_{1, j}\) then
    \[
      T(x,y) = \left((\frac{1}{5W}+\frac{1}{5})^{-1}(x-1+\frac{1}{5W}), 5(y-j+1)\right) .
      \]

         \begin{figure}[h]
\centering
\includegraphics[width = 0.8\columnwidth, height = 0.8\columnwidth,
keepaspectratio]{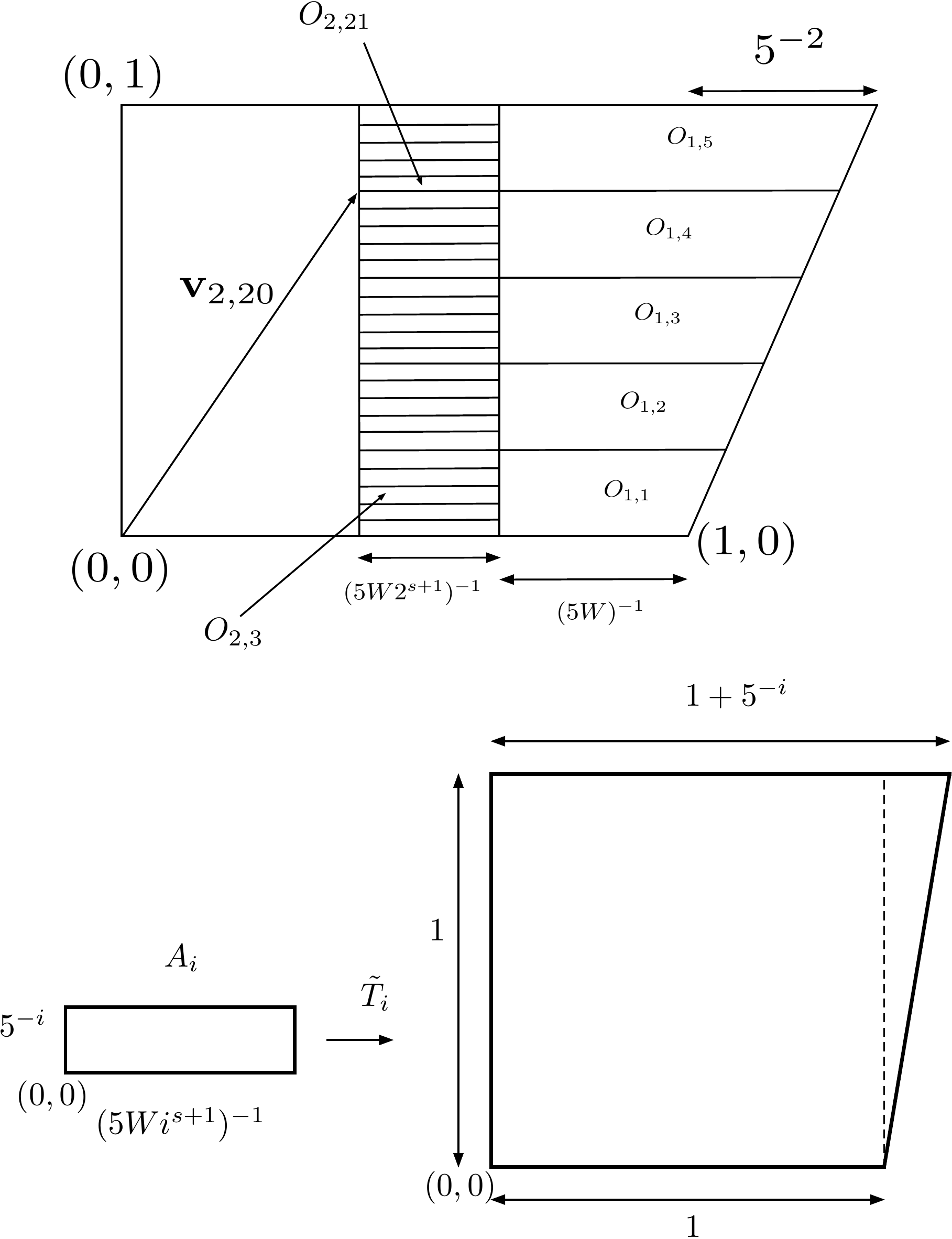}
\caption{Objects related to the definition of \(T\).}
\label{fig-twoDmap}
\end{figure}

See \Cref{fig-twoDmap} for a depiction of the partition elements and
the action of the map \(T\).

Note that, for \(i=1\) and \(1 \le j \le 5\) we have
 \[
   DT (x,y) = \begin{bmatrix}
     \frac{5W}{1+W} & 0 \\
     0 & 5 \\
   \end{bmatrix} \text{ and } DT^{-1} (x,y) = \begin{bmatrix}
     \frac{1+W}{5W} & 0 \\
     0 & \frac{1}{5} \\
   \end{bmatrix},
 \]
 while for \(i \ge 2\) and \(1 \le j \le 5^{i}\), we have
 \[
   DT (x,y) = \begin{bmatrix}
     5Wi^{s+1}(1+y) & 5Wi^{s+1}x \\
     0 & 5^{i}\\
   \end{bmatrix}
   \text{ and }  DT^{-1} (x,y) = \begin{bmatrix}
     \frac{1}{5Wi^{s+1}(1+y)} & \frac{-x}{5^{i}(1+y)} \\
     0 & \frac{1}{5^{i}}\\
   \end{bmatrix}
 \]
 
 We check that \(T:\uspace \circlearrowleft\) satisfies our
 assumptions.
 \subsection{Uniform expansion}
Consider \(T^{i}O_{i,j}\) for some \(i \ge 1\)
and \(1 \le j \le 5^{i}\).
For every \( z_{1}, z_{2} \in T^{i}O_{i,j}\) the line segment joining \(z_{1}\)
and \(z_{2}\) is  contained in \(T^{i}O_{i,j}\). Therefore, by the mean value
inequality we have, \(\forall z_{1}, z_{2} \in T^{i}O_{i,j}\),
\[
   \begin{split}
  \norm{h(z_{2})-h(z_{1})}_{2} &\le
  \sqrt{\dimn}\norm{h(z_{2})-h(z_{1})}_{0} \le \sqrt{\dimn} \sup_{z \in T^{i}O_{i,j}} \norm{Dh(z)}_{0}
   \norm{z_{2}-z_{1}}_{0} \\ &\leq \sqrt{2}\sup_{h(z)\in O_{i,j} }\norm{DT^{-1}(h(z))}_{0}
   \norm{z_{2}-z_{1}}_{2} \\
   &\le \expan_{i,j} \norm{z_{2}-z_{1}}_{2}, 
   \end{split}
 \]
 where  \(\norm{\cdot}_{0}\) denotes the sup-norm and
 \[
\begin{split}
\expan_{1,j} &= \sqrt{2}\max\left\{\frac{1}{5}, \frac{1+W}{5W} \right\}, \  1 \le j \le 5 \\
   \expan_{i,j} &= \sqrt{2}\max\left\{\frac{1}{5^{i}},
     \frac{1}{5Wi^{s+1}} + \frac{1}{5^{i}} \right\}, \ i\ge 2, 1\le j \le
 5^{i}.
\end{split}
\]

Now we fix \(s\) small enough so that \(W=W(s) > 10\). Then, \(\expan_{1,j} \le
(1.1\sqrt{2})/5\) and \(\expan_{i,j} \le i^{-(s+1)}/20 \le 1/10\), \(\forall i \ge
2, 1 \le j \le 5^{i}\). Therefore we
can take \(\expan = (1.1\sqrt{2})/5\). Note that we can take \(\epstailtwo=\infty\).

\subsection{Bounded distortion}
Suppose \(z_{1},z_{2} \in T^{i}O_{i,j}\) for some \(i \ge 1\)
and \(1 \le j \le 5^{i}\). Suppose \(h(z_{1})
=(x_{1},y_{1})\) and \(h(z_{2})=(x_{2}, y_{2})\). We have

\[
  \begin{split}
  Jh(z_{1}) &= \abs{\det DT^{-1} (h(z_{1}))} =
  \frac{1}{5^{i+1}Wi^{s+1}(1+y_{1})},\\
    Jh(z_{2}) &= \frac{1}{5^{i+1}Wi^{s+1}(1+y_{2})}.
  \end{split}
\]
Therefore
\[
   \abs{\ln\frac{Jh(z_{1})}{Jh(z_{2})}} =
   \abs{\ln (1+y_{2}) - \ln (1+y_{1})} \le C\norm{z_{1}-z_{2}}_{2},
 \]
where \(C = \sup_{y} 1/(1+y) \leq 1\). So comparing to
\eqref{eq:dist}, we can take, \(\epstailthree=\infty\), \(\tilde \dist = 1\) and \(\distexp =1\).

\subsection{Complexity}
We need to check that there exists \(\epstailfour>0\) such that for
every open set \(I\), \(\diam I \le \epstailfour\) and
\(\ve<\epstailfour\),
\begin{equation} \label{eq:complex} \sum_{\{h \in \cH: \leb(I \cap
    O_{h})>0\}}\frac{\leb(h (\partial_{\ve}T(I \cap
    O_{h}))\setminus \partial_{\expan\ve}I)}{\leb(\partial_{\expan
      \ve} I)} \leq \sigtail < (\expan^{-1}-1).
\end{equation}

Recall that in
this section by \(\partial A\) we mean the boundary in \(\bR^{2}\)
(not in \(\uspace\)). 

In order to estimate each term of the complexity expression we need to
map the set \(I \cap O_{h}\) forward, consider its \(\ve\)-boundary
and map it back using the inverse branch \(h\). Since in our example,
each branch can be extended to the boundary in the sense of
\Cref{bdaction}, it is clear that after pulling back, we get a set
which is contained in the \(\expan_{h}\ve\)-boundary of
\(I \cap O_{h}\). However, this is not enough for our purposes,
because there are exponentially many horizontal boundaries and
\(\expan_{h}=\expan_{i,j}\) only decreases polynomially (in \(i\)). So
we need to estimate the contributions from horizontal boundaries more
carefully. This can be done using the fact that in our example
horizontal boundaries of \(O_{h}\) map to horizontal boundaries of
\(\uspace\) and \(T\) is a skew-product that contracts vertical
distances by an exponential factor. Hence the contribution from each
horizontal boundary is exponentially small. Note that the skew-product
nature of \(T\) is crucially used here.

Let us now formally describe how to estimate
\(\leb(h (\partial_{\ve}T(I \cap
O_{h}))\setminus \partial_{\expan\ve}I)\). First we describe how to
split the set \(h (\partial_{\ve}T(I \cap O_{h})\) so that we can take
advantage of the exponential contraction in the vertical
direction. Recall that by \Cref{bdaction},
\[
  \partial T(I \cap O_{h})) \subset T\partial(I \cap O_{h}).
\]
Also, it is a fact that \[
  \partial (I \cap O_{h}) \subset (\partial I \cap \cl O_{h}) \cup
  (\cl I \cap \partial O_{h}).
\]
It follows that
\begin{equation} \label{eq:bdsplit1}
  \begin{split}
    \partial_{\ve}T(I\cap O_{h})=\{y \in T(I \cap O_{h}) :
    \metr(y, \partial T(I \cap O_{h}) < \ve\} & \\ &\hspace{-6cm}
    \subset \{y \in T(I \cap O_{h}) : \metr(y, T(\partial I \cap \cl
    O_{h}) \cup T (\cl
    I \cap \partial O_{h})) < \ve\}\\
    &\hspace{-6cm} \subset \{y \in T(I \cap O_{h}): \metr(y,
    T(\partial I \cap \cl O_{h}))<\ve\}
    \cup \\
    &\hspace{-4cm}\{y \in T(I \cap O_{h}): \metr(y, T (\cl I
    \cap \partial O_{h})) < \ve\}.
  \end{split}
\end{equation}
The second term of the right-hand side can further be split into two
horizontal and two vertical parts. We would like to obtain a better
estimate on the horizontal parts and the following lemma serves this
purpose.

\begin{lem}\label{lem:horzbd}
  Suppose \(h \in \cH^{n}\) is an inverse branch and \(H^{1}_{h}\) is
  the bottom horizontal boundary of \(O_{h}\). Then
  \begin{equation}\label{eq:horzbd}
    \{x \in I \cap O_{h}: \metr(Tx, T(\cl I \cap H^{1}_{h}))<\ve\} \subset \{x
    \in I \cap O_{h}: \metr(x,\cl I \cap H^{1}_{h} ) < 5^{-n}\ve\}.
  \end{equation}
\end{lem}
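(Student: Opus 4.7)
The plan is to exploit the skew-product structure of $T$. On each partition element $O_{i,j}$ the map factors as $T(x,y) = (A_{i,j}(x,y), B_{i,j}(y))$, with $\abs{B_{i,j}'} \ge 5$, so the $y$-coordinate is autonomous. By the chain rule, for any $h \in \cH^n$ the second coordinate of $T^n|_{O_h}$ is an autonomous map $B^n_h$ of $y$ alone with $\abs{(B^n_h)'} \ge 5^n$. Correspondingly, $O_h$ projects onto a single $y$-interval $(y^*, y^{**})$, and the bottom boundary $H^1_h$ lies in the horizontal line $\{y = y^*\}$.

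Given $x = (x_1, x_2) \in I \cap O_h$ in the left-hand side set, an infimum/approximation argument produces $q = (q_1, y^*) \in \cl I \cap H^1_h$ with $\metr(T^n x, T^n q) < \ve$. Since the Euclidean distance dominates the $y$-coordinate difference, $\abs{B^n_h(x_2) - B^n_h(y^*)} < \ve$; applying the mean value theorem with $(B^n_h)' \ge 5^n$ yields $\abs{x_2 - y^*} < 5^{-n}\ve$. I then take the vertical projection $q^* := (x_1, y^*)$. Because $x \in O_h$ and $O_h$ has a $y$-product structure with bottom $H^1_h$, the point $q^*$ lies in $H^1_h$, and $\metr(x, q^*) = \abs{x_2 - y^*} < 5^{-n}\ve$. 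Once $q^* \in \cl I$ is verified, the desired inclusion follows via $\metr(x, \cl I \cap H^1_h) \le \metr(x, q^*)$.

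The hardest step is verifying $q^* \in \cl I$: openness of $I$ combined with $x \in I$ being at vertical distance $< 5^{-n}\ve$ from $q^*$ does not automatically yield closure membership, since arbitrarily small balls around $q^*$ might fail to meet $I$. To overcome this I would select the approximating $q$ more carefully, namely by minimizing the \textit{horizontal} (rather than Euclidean) distance to $T^n x$ within $T^n(\cl I \cap H^1_h)$; pulling back via the skew-product inverse, whose vertical component contracts exactly by $5^{-n}$ and whose horizontal cross-contribution is controlled by $\partial_y T^n_x / \partial_x T^n_x$, yields a point of $\cl I \cap H^1_h$ that is close to $q^*$ in both coordinates and hence to $x$ within $5^{-n}\ve$ in Euclidean norm. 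Alternatively, the downstream complexity estimate only requires the LHS set to be contained in the thin horizontal strip $\{\abs{x_2 - y^*} < 5^{-n}\ve\}$, which Steps 1--2 above establish directly and which suffices for the measure bound on the horizontal contribution to the complexity expression.
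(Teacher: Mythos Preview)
Your approach coincides with the paper's: the paper's entire proof is the two-line remark that $T(\cl I \cap H^{1}_{h})$ lies in the horizontal line $\{y=0\}$ (the bottom edge of $\uspace$) and that $h\in\cH^{n}$ contracts vertical distances by $5^{-n}$. The paper does not work out the projection argument you give in Steps~1--5, and in particular it never addresses the $\cl I$-membership point you single out.

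Your concern about $q^{*}\in\cl I$ is legitimate as far as the literal inclusion \eqref{eq:horzbd} goes: from the hypothesis one only obtains $|x_{2}-y^{*}|<5^{-n}\ve$, i.e.\ membership in the thin horizontal strip above $H^{1}_{h}$, and the overall contraction of $h$ is only $\expan^{n}>5^{-n}$, so one cannot in general produce a point of $\cl I\cap H^{1}_{h}$ within Euclidean distance $5^{-n}\ve$. Your second workaround is exactly the right resolution and is how the lemma is actually consumed: in Case~2.2 of the complexity verification the argument only bounds the \emph{measure} of the horizontal strips, so the containment
\[
\{x\in I\cap O_{h}:\metr(Tx,T(\cl I\cap H^{1}_{h}))<\ve\}\subset\{x\in I\cap O_{h}:|x_{2}-y^{*}|<5^{-n}\ve\}
\]
is all that is used. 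Your first workaround would at best recover the bound up to a multiplicative constant (the cross-term $\partial_{v}h_{1}$ in the inverse is of order $5^{-i}$ as well, contributing an extra factor $\sqrt{2}$ even in the favourable case where the horizontal image coordinate can be matched), so the second route is both simpler and sufficient.
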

\begin{proof}
  This follows from the fact that \(T(\cl I \cap H_{h})\) is contained
  in a horizontal line (bottom edge of \(\uspace\)) and that
  \(h \in \cH^{n}\) contract vertical distances by a factor of
  \(5^{-n}\).
\end{proof}

Suppose \(H^{2}_{h}, V_{h}^{1}, V_{h}^{2}\) are the top, left and
right boundaries of the rectangle \(O_{h}\). It follows from
\eqref{eq:bdsplit1}, \eqref{eq:horzbd} and condition \ref{hyp1} that
\begin{equation}\label{eq:bdsplit2}
  \begin{split}
    h(\partial_{\ve}T(I \cap O_{h})) &\subset \{x \in I \cap O_{h} :
    \metr(Tx, T(\partial I \cap \cl O_{h}))<\ve\} \\ &\hspace{1cm}
    \cup \{x \in I \cap O_{h}: \metr(Tx, T (\cl I \cap H^{1,2}_{h})) <
    \ve\} \\ &\hspace{1cm} \cup \{x \in I \cap O_{h}: \metr(Tx, T (\cl
    I \cap V^{1,2}_{h})) < \ve\} \\
    &\subset \{x \in I \cap O_{h} : \metr(x, \partial I \cap \cl
    O_{h})<\expan_{h}\ve\} \\ &\hspace{1cm} \cup \{x \in I \cap O_{h}:
    \metr(x, \cl I \cap H^{1,2}_{h}) < 5^{-n}\ve\} \\ &\hspace{1cm}
    \cup \{x \in I \cap O_{h}: \metr(x, \cl I \cap V^{1,2}_{h}) <
    \expan_{h}\ve\}.
  \end{split}
\end{equation}

Now that we have isolated the exponential contribution of horizontal
boundaries, we are almost ready to estimate
\(\leb(h (\partial_{\ve}T(I \cap
O_{h}))\setminus \partial_{\expan\ve}I)\). We just need one last
ingredient -- a lemma from \cite{BT08}.
\begin{lem}[Sublemma C.1 of \cite{BT08}] \label{BT} Suppose \(I\) is a
  non-empty measurable bounded subset of the plane and \(E\) is a
  straight line cutting \(I\) into left and right parts \(I_{l}\) and
  \(I_{r}\). Then \(\forall \ve \ge 0\) and \(0 \le \xi \le 1\), we
  have
  \begin{equation} \label{eq:BT}
    \begin{split}
      \leb(\{x \in I_{l}: \metr(x, E)\leq \xi \ve\}\setminus \{x \in
      I: \metr(x, \partial I) \leq \ve\})
      &\leq \\
      & \hspace{-3cm} \xi \leb(\{x \in I_{r}: \metr(x,
      \partial I)\le \ve\}).
    \end{split}
  \end{equation}  
\end{lem}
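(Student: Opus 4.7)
The plan is to set up coordinates so that $E$ is the $y$-axis, with $I_l \subset \{x_1 \le 0\}$ and $I_r \subset \{x_1 \ge 0\}$, and to write $A$ for the set on the left-hand side of \eqref{eq:BT} and $B = \{x \in I_r : \metr(x, \partial I) \le \ve\}$. The cases $\xi = 0$ or $\ve = 0$ are trivial, so assume both positive. By Fubini's theorem,
\[
\leb(A) = \int \leb_1(A^y)\,dy, \qquad \leb(B) = \int \leb_1(B^y)\,dy,
\]
where $A^y = \{x_1 : (x_1, y) \in A\}$ and $B^y = \{x_1 : (x_1, y) \in B\}$, so it suffices to establish the slice inequality $\leb_1(A^y) \le \xi\,\leb_1(B^y)$ for almost every $y$.

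To do this, I would work one connected component of $I^y$ at a time. Assuming (without loss of generality) that $I$ is open, the slice $I^y$ is an open subset of $\bR$, and each of its connected components is an open interval $(L_\alpha, R_\alpha)$ whose endpoints $(L_\alpha, y), (R_\alpha, y)$ necessarily lie in $\partial I$. The decisive geometric fact is that for any $x_1 \in A^y$ sitting in such a component, the condition $\metr((x_1, y), \partial I) > \ve$ forces the closed horizontal segment $[x_1 - \ve, x_1 + \ve]\times\{y\}$ into $I$, and hence into the same component, so $L_\alpha \le x_1 - \ve$ and $R_\alpha \ge x_1 + \ve$. Combining this with the defining conditions, on each component one obtains
\[
A^y \cap (L_\alpha, R_\alpha) \subset \bigl(\max\{L_\alpha + \ve,\, -\xi\ve\},\; \min\{R_\alpha - \ve,\, 0\}\bigr),
\]
while $(R_\alpha, y) \in \partial I$ yields
\[
B^y \cap (L_\alpha, R_\alpha) \supset \bigl(\max\{0,\, R_\alpha - \ve\},\; R_\alpha\bigr).
\]

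Comparing the lengths of these two explicit intervals reduces the component-wise slice inequality to elementary algebra in the parameters $\xi, \ve, L_\alpha, R_\alpha$, which I would settle with a two-case split. If $R_\alpha \ge \ve$, the $B$-interval has length $\ve$ while the $A$-interval has length at most $\xi\ve$, giving the ratio $\xi$ directly. If $R_\alpha < \ve$, the $B$-interval has length $R_\alpha$ and, using also $L_\alpha \le x_1 - \ve \le -\ve$, the $A$-interval has length at most $R_\alpha - (1-\xi)\ve$; the desired bound $R_\alpha - (1-\xi)\ve \le \xi R_\alpha$ is equivalent to $R_\alpha \le \ve$, which is precisely the hypothesis of the case. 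Summing these bounds over the components $\alpha$ of $I^y$ yields $\leb_1(A^y) \le \xi \leb_1(B^y)$, and Fubini then finishes the proof of \eqref{eq:BT}.

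The main technical obstacle, in my view, is the second case of the case split: the component is narrow (with width between $2\ve$ and roughly $(2+\xi)\ve$) and sits essentially on top of $E$, so the $B$-interval gets truncated at $0$ and one must exploit both endpoints of the component simultaneously to trap $A^y$. This is also the case that shows the constant $\xi$ cannot be improved, as is visible from the example where $I$ is a thin axis-parallel strip aligned with $E$.
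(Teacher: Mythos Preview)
Your argument is correct. The paper does not give its own proof of this lemma; it simply quotes Sublemma~C.1 of \cite{BT08} and uses it as a black box, so there is nothing to compare against beyond noting that your Fubini-plus-slice approach is exactly the natural one.

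Two small remarks. First, the reduction ``without loss of generality $I$ is open'' is legitimate but deserves a sentence: since $\partial(\operatorname{int} I)\subset\partial I$, replacing $I$ by $\operatorname{int} I$ can only enlarge the set on the left-hand side of \eqref{eq:BT} (the condition $\metr(x,\partial I)>\ve$ becomes easier) and shrink the set on the right-hand side, so the open case implies the measurable case. Second, your sum over components is in fact a single term: if a component $(L_\alpha,R_\alpha)$ contributes to $A^y$ then it must satisfy $L_\alpha<0<R_\alpha$ (a component with $R_\alpha\le 0$ would force $x_1<R_\alpha-\ve\le-\ve\le-\xi\ve$, contradicting $x_1\ge-\xi\ve$), and there is at most one such component. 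This streamlines the write-up and makes the role of the right endpoint $R_\alpha$ in both cases more transparent.
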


\begin{proof}[Proof of \cref{eq:complex}]
  Consider a set \(I\) of \(\diam I \leq \epstailfour\), where
  \(\epstailfour\) is sufficiently small and will be determined
  shortly. By a \textit{corner point} we mean a point at which more
  than two (i.e. three or four) partition elements meet.
  
  We consider two cases:
  \begin{enumerate}[wide, labelwidth=!,
    labelindent=0pt,label=(\arabic*)]
  \item \(\cl I\) \textbf{contains at most one corner point.}
    \begin{figure}[h]
      \centering
      \includegraphics[width = 0.5\columnwidth, height =
      0.3\columnwidth, keepaspectratio]{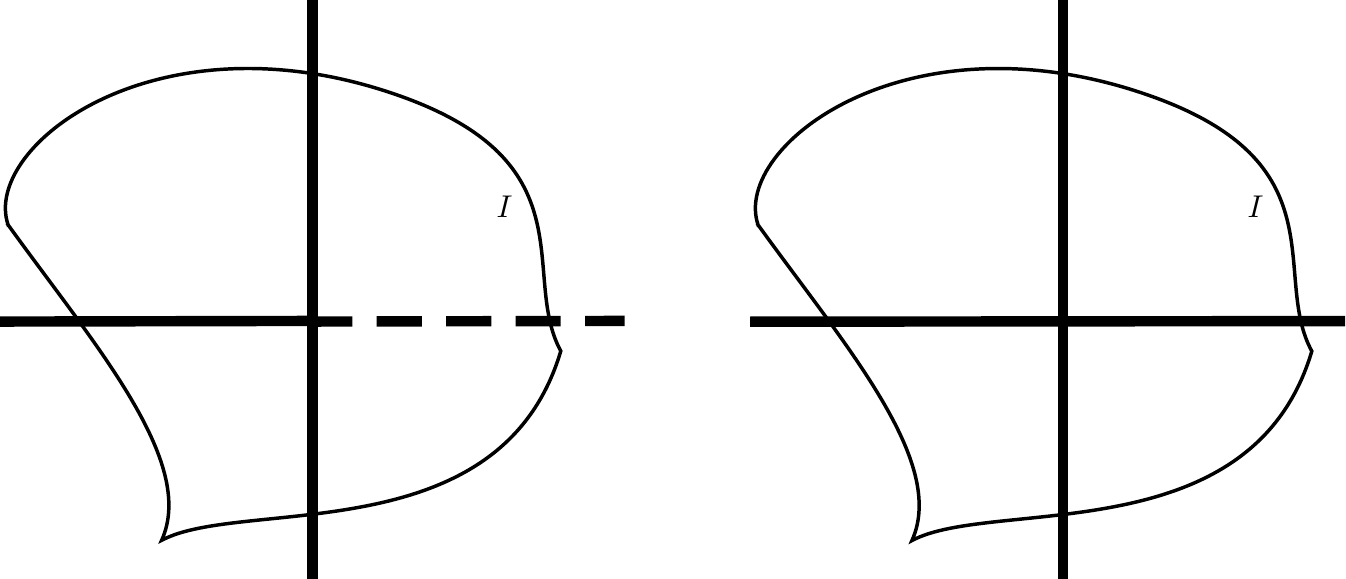}
      \caption{Case 1: the open set \(I\) and boundaries of partition
        elements (solid lines). The dashed line is the continuation of
        the horizontal line segment that ends inside \(I\).}
      \label{fig-case1}
    \end{figure}

    In this case at the corner point we have one vertical line and one
    horizontal line intersecting (Figure \ref{fig-case1}).  We
    continue each line smoothly until it intersects the boundary of
    \(I\).
   
    This way we get two lines that intersect \(I\) completely and it
    follows from \eqref{eq:BT} that each one contributes at most as
    much as the corresponding boundary of \(I\). So, recalling
    \eqref{eq:bdsplit2}, the overal contribution of such boundaries
    (one straight vertical line and one straight horizontal line) is
    \[
      \leq \frac{2\leb(\partial_{\expan \ve}I)}{\leb(\partial_{\expan
          \ve}I)}=2.
    \]
    So the condition \eqref{eq:complex} is satisfied.

  \item \(\cl  I\) \textbf{contains two or more corner points.}

    In this case the smaller the diameter of \(I\), the closer it must
    be to the line of accumulation of \(\{O_{i,j}\}\). So \(I\) may
    intersect only \(O_{i,j}\) for sufficiently large \(i \ge
    i_{0}\). In this case the set \(I\) can intersect infinitely many
    singularity lines (lines consisting of boundaries of partition
    elements). Our singularity lines are of two types: the vertical
    lines which cross through \(I\); the horizontal lines infinitely
    many of which may terminate inside \(I\). Please recall
    \eqref{eq:bdsplit2} showing how the vertical and horizontal
    singularity lines contribute to the complexity expression.
    
    \begin{enumerate}[wide, labelwidth=!, labelindent=0pt,
      label=(\arabic{enumi}.\arabic*)]

    \item \textbf{Vertical strips.}

             \begin{figure}[h]
               \centering
               \includegraphics[width = 0.5\columnwidth, height =
               0.3\columnwidth,
               keepaspectratio]{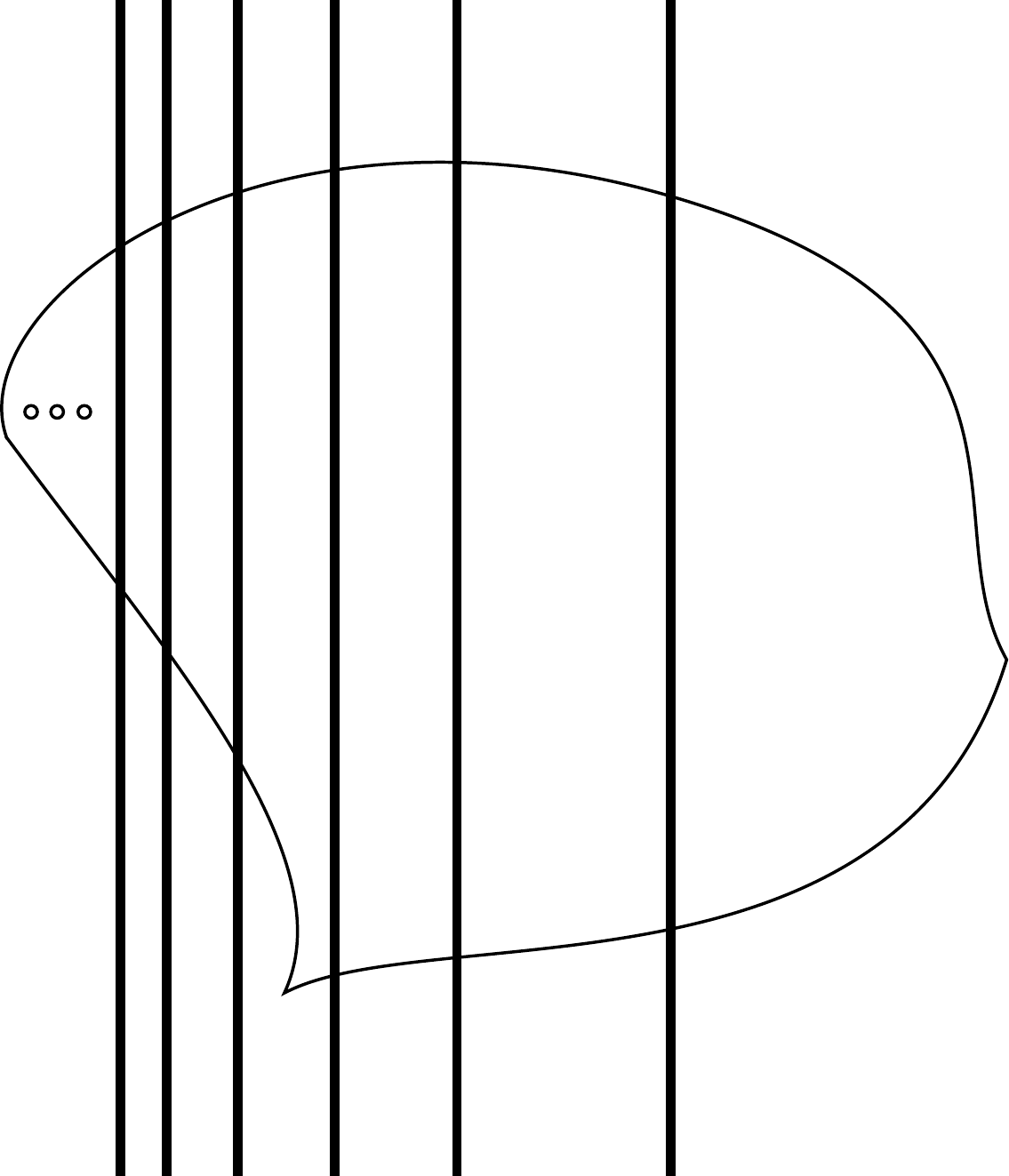}
               \caption{Case 2.1: the open set \(I\) and infinitely
                 many vertical singularity lines crossing it. Each
                 line consists of pieces of boundaries of partition
                 elements. The horizontal singularity lines are not
                 shown in the figure to prevent clutter.}
               \label{fig-case2vert}
             \end{figure}
       
             In this case, using \Cref{BT} with
             \(\xi = \expan_{n}/\expan\) and \(\ve\) replaced by
             \(\expan \ve\), the complexity expression is bounded by
             \begin{equation} \label{eq:bound1} \le \frac{
                 \leb(\partial_{\expan\ve}I)\sum_{n=i_{0}}^{\infty}\expan_{n}/\expan
               }{\leb(\partial_{\expan\ve} I)} =\expan^{-1}
               \sum_{n=i_{0}}^{\infty}\expan_{n} ,
             \end{equation}
             where
             \(\expan_{n} :=\sup_{1 \le j \le 5^{n}} \expan_{n,j} \).

             Since \(\sum_{n=1}^{\infty} \expan_{n} < \infty\), the
             above expression can be made arbitrarily small by making
             \(i_{0}\) sufficiently large. In turn, \(i_{0}\) can be
             made arbitrarily large by choosing \(\epstailfour\)
             sufficiently small. So by choosing \(\epstailfour\)
             sufficiently small we can make \eqref{eq:bound1} smaller
             than the contribution of case 1 where \(I\) contains at
             most one corner point.

           \item \textbf{Horizontal strips.}  Let \(\ball_{I}\) be a
             ball such that \(I \subset \ball_{I}\) and
             \(\diam \ball_{I}=\diam I\). The horizontal strips
             (horizontal singularity lines and their
             \(\expan_{h}\ve\)-boundaries) that intersect
             \(\ball_{I}\) are strips of variable width around
             straight horizontal lines \(\{H_{k,j}\}\) that go all the
             way across \(\ball_{I}\) or terminate inside
             \(\ball_{I}\) on a vertical singularity line
             \(V_{k}\). Let \(\{H_{i_{0}, j}\}_{j=N_{1}}^{N_{2}}\) be
             the collection of horizontal segments that terminate on
             the rightmost vertical line \(V_{i_{0}}\) that intersects
             \(\ball_{I}\). Consider a uniform strip of width
             \(5^{-i_{0}}\ve\) around each \(H_{i_{0},j}\),
             \(j=N_{1}(i_{0}), \dots, N_{2}(i_{0})\). The measure of
             these large strips of uniform width bounds the measure of
             \textit{all} the Horizontal strips that intersect
             \(\ball_{I}\). This is because the original strips have
             variable width (their widths decrease exponentially as
             they approach the accumulation line of partition
             elements. See \Cref{fig-F} which depicts how the smaller
             strips of variable width can be combined to form a large
             strip of fixed width. In the figure it is assumed that
             the map is doubling in the vertical direction just for
             the clarity of the figure. We must point out that to make
             this argument rigorous one needs to imitate the proof of
             \Cref{BT}. This is not difficult so we omit the technical
             details.
             \begin{figure}[h]
               \centering
               \includegraphics[width = 0.8\columnwidth, height =
               0.6\columnwidth, keepaspectratio]{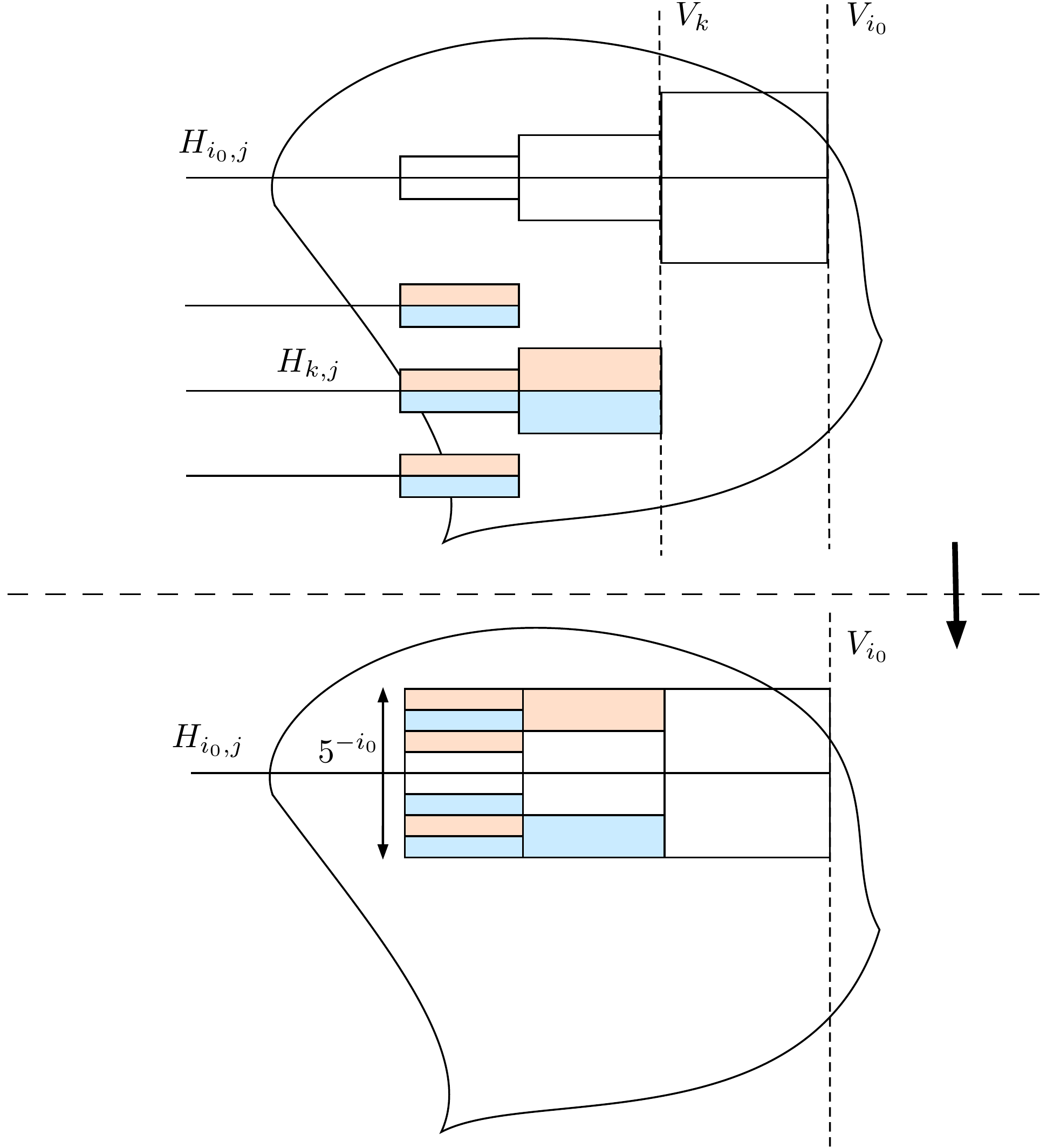}
               \caption{Case 2.2: Merging horizontal boundaries of
                 segments that terminate inside \(I\).}
               \label{fig-F}
             \end{figure}

             The measure of each such ``effective strip'' is at most
             as much as \(5^{-i_{0}}/\expan\) times the
             \(\expan\ve\)-boundary of \(I\) by \Cref{BT}. So the
             measure of all the horizontal strips is bounded by
             \[
               \leq (N_{2}(i_{0})-N_{1}(i_{0}))
               (5^{-i_{0}}/\expan)\leb(\partial_{\expan \ve}I),
             \]
             where
             \((N_{2}(i_{0})-N_{1}(i_{0})) \leq
             C\diam(\cB_{I})5^{i_{0}}\) since there are \(5^{i_{0}}\)
             equally spaced horizontal singularity lines that cross
             \(\uspace\) and terminate on \(V_{i_{0}}\). So in this
             case the complexity expression is bounded by
             \[
               \leq \frac{ C\diam(\cB_{I})\expan^{-1}
                 \leb(\partial_{\expan \ve}I)}{\leb(\partial_{\expan
                   \ve}I)} \leq C\expan^{-1} \diam(\cB_{I})
             \]
             Clearly this quantity can also be made arbitrarily small
             by choosing \(\epstailfour\) sufficiently small.
           \end{enumerate}
         \end{enumerate}
       \end{proof}

       Now we check hypothesis \ref{hyp4} and conclude that the growth
       lemma holds. Then we proceed to induce \(T\), with exponential
       tails, to a Gibbs-Markov map with finitely many images.

       \subsection{Divisibility of large sets} \label{subsec-divis}
       This condition follows from \Cref{oneDdivisibility}. Recall that
distortion bound holds for \(\epstailthree =\infty\).

\subsection{Inducing partition}
\label{subsec:induceGM}

In this subsection we check conditions \ref{hyp7} and \ref{hyp8}
simultaneously.

\begin{proof}[Proof of conditions \ref{hyp7} and \ref{hyp8}]
  Let \(c=0.01\) and let \(\cS=\{S_{j}\}_{j}\) denote a grid of open squares in \(\bR^{2}\)
  whose elements have sides parallel to the horizontal and vertical
  axes and have side-length \(c\delta_{0}\). Let
  \(\cR = \{S \cap \uspace : S \in \cS\}\). Since \(\uspace\) is a bounded
  subset of \(\bR^{2}\), \(\cR\) has only finitely many elements.

  \Cref{ZRitemone} of \ref{hyp7} holds by direct calculation. Indeed,
  \(\leb(\partial_{\ve} R) \le 4 \ve
  c\delta_{0}\) for every \(R \in \cR\), except for elements \(R=S
  \cap \uspace\) where \(S\) intersects the right boundary of
  \(\uspace\). This boundary is Lipschitz so \(\leb(\partial_{\ve} R) \le (4+L_{0}) \ve
  c\delta_{0}\), where \(L_{0}\) is the Lipschitz constant.

  Before we prove \cref{ZRitemtwothree} and \cref{ZRproperties} of
  \ref{hyp7}, let us remark that since the partition \(\cR\) is
  finite, it suffices to prove these statements with a choice of
  constants \(c_{R}, C_{R}>0\) that depend on the partition element
  \(R \in \cR\) because we can then choose
  \(c_{\cR} = \min\{c_{R}: R \in \cR\}\) and
  \(C_{\cR}=\max\{C_{R}: R \in \cR\}\).

  To prove \cref{ZRitemtwothree} of \ref{hyp7}, suppose
  \(I \subset \uspace \) is a \(\delta_{0}\)-regular set. Then it
  contains a ball \(\ball_{\bR^{2}}(x, \delta_{0})\) of radius \(\delta_{0}\). Let \(S \in \cS\) be a square containing the
  center of the ball \(\ball_{\bR^{2}}(x, \delta_{0})\), possibly on
  its boundary. Since
  \(\diam S < c\delta_{0}\),
  \(S \subset \ball_{\bR^{2}}(x, c\delta_{0})\). Therefore
  \(R:=S \cap \uspace \subset \ball_{\bR^{2}}(x, c\delta_{0}) \subset
  I\). To see \eqref{eq:Z621}, note that \(\exists c_{R} \in (0,1)\)
  such that
  \[
    \leb(\ball_{\bR^{2}}(x, c\delta_{0})) < (1-c_{R})
    \leb(\ball_{\bR^{2}}(x,\delta_{0})).
  \]
  Therefore,
  \[
    \leb(R) \le \leb(\ball_{\bR^{2}}(x, c\delta_{0}) ) < (1-c_{R})
    \leb(\ball_{\bR^{2}}(x,\delta_{0})) \le (1-c_{R})\leb(I).
  \]
  It follows that \(\leb(I \setminus R) \ge
  c_{R}\leb(I)\). \eqref{eq:Z622} holds as a consequence of \Cref{BT}
  since \(R\) has at most four sides that lie in \(\uspace\) and each
  one can be continued as a straight line to cross \(I\) and the
  \(\ve\)-boundary of each contributes as much as the \(\ve\)-boundary
  of \(I\), so
  \(\leb(\partial_{\ve}(I \setminus R_{})\setminus \partial_{\ve}I)
  \le 4\leb(\partial_{\ve} I)\).

  Now let us show \cref{ZRproperties} of \ref{hyp7}. Let \(L\) denote
  the left vertical side of \(\uspace\) on which partition elements
  \(\{O_{i,j}\}\) of \(T\) accumulate. Let \(Z \in \cR\) be an open
  rectangle that contains (perhaps on its boundary) the midpoint
  \(l_{0}\) of the segment \(L\). Since \(\eta=1/6\) and
  \(\delta_{0}\le \epstail\), we have
  \(\diam Z \le c\delta_{0} \le \eta \epstail\), so \eqref{eq:Zdiam}
  is satisfied. Let \(Z' = B_{\bR^{2}}(l_{0},2c\delta_{0})\), then
  \eqref{eq:oneZdiam} and \eqref{eq:oneZmeas} of \ref{hyp8} are also
  satisfied.

  To check \eqref{eq:Zcomplement}, suppose \(I \subset \uspace \) is a
  \(\delta_{0}\)-regular set and
  \(\ball_{\bR^{2}}(x,\delta_{0})\subset I\). There exists a
  translation \(Z_{\mathbf{v'}}=Z+\mathbf{v'}\) of \(Z\) such that
  \(Z_{\mathbf{v'}} \subset \ball_{\bR^{2}}(x,c\delta_{0}) \subset
  I\). Indeed one can take \(\mathbf{v'}\) to be the vector that
  translates \(l_{0} \in \cl Z\) to \(x\), the center of the ball
  \(\ball_{\bR^{2}}(x,\delta_{0}) \subset I\). Note that, by a similar
  argument to the proof of \eqref{eq:Z621}, \(\exists c_{Z}\in (0,1)\)
  s.t. \(\leb(I \setminus Z_{\mathbf{v'}}) \ge c_{Z}\leb(I)\). Now
  since
  \(\leb(I \setminus Z) \ge \leb(I) - \leb(Z) =
  \leb(I)-\leb(Z_{\mathbf{v'}})=\leb(I\setminus Z_{\mathbf{v'}})\), it
  follows that \(\leb(I \setminus Z) \ge c_{Z}\leb(I) \) hence
  \eqref{eq:Zcomplement} is satisfied.

  The proof of \eqref{eq:Zbd} is similar to the proof of
  \eqref{eq:Z622}.

  As for \eqref{eq:Zgcd}, we can show the stronger statement that
  \(TZ \supset Z\). Indeed, since \(Z\) is a rectangle (of positive
  side-lengths) bordering \(L\) and partition elements of \(T\)
  accumulate on \(L\), it is easy to see that there exists
  \(i_{0} \in \bN\) such that \(\forall i\ge i_{0}\) there exists
  \(1\le j=j(i) \le 5^{i}\) such that \(Z\) contains the sets
  \(O_{i,j+k}\), \(\forall k \in \{1,2,\dots, 5\}\). In particular, if
  we let \(j_{0}:=j(i_{0})\) and
  \(\cQ_{1} = \{O_{i_{0},j_{0}+k}\}_{k=1}^{5}\), then
  \(TZ \supset \bigcup_{O\in \cQ_{1}}TO\). Since
  \(\bigcup_{O\in \cQ_{1}}TO = \uspace\) in our example, it follows
  that \(TZ \supset Z\) hence \eqref{eq:Zgcd} is satisfied.

  It remains to finish the proof of \cref{oneZRproperties} of
  \ref{hyp8}. Note that any finite sub-collection of
  \(\bigcup_{i \ge i_{0}} \{O_{i,j(i)+k}\}_{k=1}^{5}\) meets the
  requirements for \(\cP_{Z}\) and the proof of \eqref{eq:oneZbd} is
  again similar to the proof of \eqref{eq:Z622}.
\end{proof}

\end{document}